\documentclass[12pt,a4paper]{amsart}
\makeatletter
\renewcommand\normalsize{%
	\@setfontsize\normalsize{11.7}{14pt plus .3pt minus .3pt}%
	\abovedisplayskip 10\p@ \@plus4\p@ \@minus4\p@
	\abovedisplayshortskip 6\p@ \@plus2\p@
	\belowdisplayshortskip 6\p@ \@plus2\p@
	\belowdisplayskip \abovedisplayskip}
\renewcommand\small{%
	\@setfontsize\small{9.5}{12\p@ plus .2\p@ minus .2\p@}%
	\abovedisplayskip 8.5\p@ \@plus4\p@ \@minus1\p@
	\belowdisplayskip \abovedisplayskip
	\abovedisplayshortskip \abovedisplayskip
	\belowdisplayshortskip \abovedisplayskip}
\renewcommand\footnotesize{%
	\@setfontsize\footnotesize{8.5}{9.25\p@ plus .1pt minus .1pt}
	\abovedisplayskip 6\p@ \@plus4\p@ \@minus1\p@
	\belowdisplayskip \abovedisplayskip
	\abovedisplayshortskip \abovedisplayskip
	\belowdisplayshortskip \abovedisplayskip}
\setlength\parindent    {30\p@}
\setlength\textwidth    {412\p@}
\setlength\textheight   {570\p@}
\paperwidth=210mm
\paperheight=260mm
\ifdefined\pdfpagewidth
\setlength{\pdfpagewidth}{\paperwidth}
\setlength{\pdfpageheight}{\paperheight}
\else
\setlength{\pagewidth}{\paperwidth}
\setlength{\pageheight}{\paperheight}
\fi
\calclayout
\makeatother
\usepackage{import}
\usepackage{graphicx}
\usepackage{amsmath,amsfonts,amssymb,amsthm}
\usepackage{pstricks,calc} 
\usepackage{srcltx}
\usepackage{comment}
\usepackage{hyperref}
\usepackage[all]{xy}
\usepackage[utf8]{inputenc} 
\usepackage[T1]{fontenc} 
\usepackage{enumerate}

\newtheorem{thmA}{Theorem}

\newtheorem{definition}{Definition} [section]

\newtheorem{theorem}[definition]{Theorem} 

\newtheorem{proposition}[definition]{Proposition}

\newtheorem{lemma}[definition]{Lemma}
\newtheorem{corollary}[definition]{Corollary}

\newtheorem{remark}[definition]{Remark}

\def\F{{\mathcal{F}}}
\def\FA{{\mathcal{FA}_N}}

\def\co{\colon}

\def\aut{{\rm{Aut}}}
\def\IA{{\rm{IA}}}
\def\saut{{\rm{SAut}}}
\def\autN{{\rm{Aut}}(F_N)}
\def\out{{\rm{Out}}}
\def\G{\Gamma}

\def\<{\langle}
\def\>{\rangle}

\def\2rose{{ {$(N;2)$--rose}}}

\newcommand{\inn}{{\rm{Inn}}}
\newcommand{\innn}{{\rm{Inn}}(F_N)}
\newcommand{\autn}{{\rm{Aut}}(F_N)}
\newcommand{\Out}{{\rm{Out}}}
\newcommand{\Comm}{{\rm{Comm}}}

\newcommand{\Aut}{{\rm{Aut}}}
\def\Saut{{\rm{SAut}}}

\newcommand{\ad}{{\rm{ad}}}

\newcommand{\calx}{\mathcal{X}}

\newcommand{\lk}{{\rm{lk}}}

\newcommand{\ian}{{\rm{IA}}_N}

\newcommand{\E}{\mathbb{E}} 
\def\Z{\mathbb{Z}}

\def\R{\mathbb{R}}
\def\FA{{\mathcal{AF}_N}}

\def\Lk{{\rm{lk}}}

\def\ssm{\smallsetminus}

\newcommand{\psmod}{{\rm{PMod}} }
\newcommand{\smod}{{\rm{Mod}}}
\newcommand{\pmodd}{{\rm{PMod}^\pm}}
\newcommand{\modd}{{\rm{Mod}^\pm}}

\title{Commensurations of $\Aut(F_N)$ and its Torelli subgroup}
\author{Martin R. Bridson and Richard D. Wade}
				

\usepackage{hyperref}

\begin{document}

\maketitle

\begin{abstract} 
For $N \geq 3$, the abstract commensurators of both $\aut(F_N)$ and its Torelli subgroup $\ian$ are isomorphic to $\aut(F_N)$ itself. 
\end{abstract}

\section{Introduction}

{\let\thefootnote\relax\footnotetext{{\em Subject} [2020 MSC]. Primary 20F65, 20E36; Secondary 20E05, 57M07. }}
{\let\thefootnote\relax\footnotetext{{\em Keywords}.  {Automorphism groups of free groups, abstract commensurator, free factor complex}}}

A \emph{commensuration} of a group $G$ is an isomorphism  
between two finite-index subgroups of $G$. We consider two commensurations to be equivalent if they agree on a common finite-index subgroup of their domains. The set of all commensurations with this equivalence relation forms a group $\Comm(G)$ called the \emph{abstract commensurator} of $G$. There is a natural homomorphism $\ad \co G \to \Comm(G)$ induced by conjugation. If this map is surjective, then $G$ is said to be
{\em commensurator-rigid}. 

Arithmetic lattices in semisimple Lie groups are never commensurator-rigid, 
but deep results of Mostow \cite{mostow} and Margulis \cite{margulis} tell us that 
non-arithmetic lattices in groups other than ${\rm{SL}}(2,\R)$
will have finite index in their abstract commensurators, and will be commensurator-rigid if they are maximal. The arithmetic case provides a point of contrast in the
rich 3-way analogy between automorphism groups of free groups, lattices such as ${\rm{SL}}(n,\Z)$, and mapping
class groups of surfaces of finite type \cite{Best-icm, BV06}: Ivanov \cite{Iva} proved that (with some exceptions for
small surfaces) the extended mapping class group  
of a surface of finite type is commensurator-rigid, while Farb and Handel \cite{FH} (for $N\ge 4$)
and Horbez and Wade \cite{HW2} (for $N\ge 3$) proved that the same is true for $\out(F_N)$,
the outer automorphism group of a free group (see \cite{Stu} for a survey of the area).  

Our main purpose in this article is to prove that $\aut(F_N)$ is also commensurator rigid for $N\ge 3$. (There is no obvious formalism by which commensurator rigidity for $\aut(F_N)$ can be deduced from commensurator rigidity for $\out(F_N)$, or \emph{vice versa}.)
In the case of mapping class groups and $\out(F_N)$, non-trivial variations on the proof of commensurator rigidity allow one to prove that 
the ambient group  
is also the abstract commensurator of subgroups 
that are big in a suitable sense, e.g. \cite{HW2, BM2, BM, BPS}. 
Our second purpose in this article is to explore the extent to which this is also true in $\aut(F_N)$.
Our previous work \cite{BW} on embeddings of direct products of free groups into $\aut(F_N)$ will play a crucial role in 
both halves of the paper. The idea of using such embeddings to
explore commensurators originates in \cite{BPS} and was used in \cite{HW2}.

\begin{thmA}\label{t:n}
For all $N\ge 3$, 
the natural map $\ad \co \aut(F_N)\to {\rm{Comm}}(\aut(F_N))$ is an isomorphism. More precisely, any isomorphism $f\co \G_1 \to \G_2$ between finite-index subgroups
of $\aut(F_N)$ is the restriction of conjugation by an element of $\aut(F_N)$.
\end{thmA}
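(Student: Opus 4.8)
The standard strategy for results of this type (following Ivanov, Farb–Handel, Horbez–Wade) is to show that any isomorphism $f\colon \G_1 \to \G_2$ between finite-index subgroups of $\aut(F_N)$ induces an automorphism of a rigid combinatorial object on which $\aut(F_N)$ acts — here the natural candidate is the free factor complex $\mathcal{F}_N$ (or a variant adapted to $\aut$ rather than $\out$), whose automorphism group is known to be $\out(F_N)$ by the work of Bridson–Vogtmann, together with the kernel $\innn$ that distinguishes $\aut$ from $\out$. So the proof will proceed in three stages: (1) an \emph{algebraic recognition} step that characterises, purely in terms of the abstract group structure of a finite-index subgroup $\G \le \aut(F_N)$, a distinguished conjugacy class of subgroups — those commensurable with stabilisers of free factors (or with $\innn$ itself); (2) a \emph{rigidity} step showing that the induced bijection on this poset of subgroups is realised by conjugation by a single element $\gamma \in \aut(F_N)$; (3) a \emph{bootstrapping} step showing that $\mathrm{ad}(\gamma)$ and $f$ then agree on a finite-index subgroup, so they define the same element of $\Comm$.

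**Carrying it out.** First I would identify $\innn$ inside $\G_1$ canonically: it should be characterised (after passing to finite index) as a maximal normal subgroup with prescribed properties — e.g. it is free of rank $N$, and $\G/(\G\cap\innn)$ embeds in $\out(F_N)$ with the expected finite-index image — so that $f(\G_1 \cap \innn)$ is commensurable with $\innn$; this reduces part of the problem to the known commensurator rigidity of $\out(F_N)$ from \cite{HW2}, giving an outer automorphism realising the induced map modulo inner automorphisms. The genuinely new content is pinning down the inner part, and for this the embeddings of direct products of free groups from \cite{BW} are the key tool: following the method introduced in \cite{BPS} and used in \cite{HW2}, one uses the existence (and classification up to conjugacy) of such product subgroups $P \le \aut(F_N)$ — realised geometrically via disjoint free factors or via the $(N;2)$–rose configurations — to detect, group-theoretically, the stabiliser data needed to reconstruct the ambient action. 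Concretely, a maximal product subgroup and its ``coordinate'' factors are preserved (up to commensurability and finite index) by $f$, and one matches these up to produce a candidate $\gamma$; one then checks that $\mathrm{ad}(\gamma^{-1})\circ f$ fixes enough such subgroups to be inner, and finally that it is in fact trivial on a finite-index subgroup.

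**The main obstacle.** The hardest step is the passage from ``$f$ agrees with $\mathrm{ad}(\gamma)$ modulo inner automorphisms of $F_N$'' to ``$f$ agrees with $\mathrm{ad}(\gamma)$ on the nose on a finite-index subgroup'' — that is, eliminating the inner ambiguity that the $\out$-level rigidity leaves behind. This is exactly the point at which $\aut$ behaves differently from $\out$, and where the centreless-ness and the structure of $\innn \cong F_N$ inside $\aut(F_N)$ (in particular the fact that $\G_2 \cap \innn$ has trivial centraliser in $\aut(F_N)$, up to finite index, by \cite{BW}-type arguments) must be used to rigidify the translation part. I expect to handle this by showing that the conjugation action of $\G_1$ on $\G_1\cap\innn$ is transported by $f$ to the conjugation action of $\G_2$ on $\G_2\cap\innn$, which forces the inner-automorphism correction to be realised by a single element of $F_N$, and then absorbing that element into $\gamma$. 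A secondary technical point is ensuring all the ``canonical'' subgroups in steps (1)–(2) are genuinely characteristic \emph{within finite-index subgroups}, not just within $\aut(F_N)$ itself, which typically requires the commensurator-invariance (rather than merely normal-invariance) of the relevant algebraic characterisations.
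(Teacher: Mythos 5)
Your plan hinges on step (1): that any isomorphism $f\colon\G_1\to\G_2$ between finite-index subgroups carries $\G_1\cap\innn$ to a subgroup commensurable with $\innn$, after which you would quotient to $\out(F_N)$ and invoke \cite{HW2}. This is precisely the point where the argument has a genuine gap, and it is essentially where the whole difficulty of the theorem lives. The characterisation you sketch does not work as stated: $\G_1\cap\innn$ is a finite-index subgroup of $F_N$, hence free of rank $1+[\innn:\G_1\cap\innn](N-1)$, not of rank $N$; and the clause ``$\G/(\G\cap\innn)$ embeds in $\out(F_N)$ with the expected image'' is not an intrinsic group-theoretic property that can be transported through $f$ --- to verify it for $f(\G_1\cap\innn)$ you would already need to know where $f$ sends $\innn$, which is the thing to be proved. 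Normality alone does not save you: $f(\G_1\cap\innn)$ is indeed a normal free subgroup of $\G_2$, but there is no classification of normal subgroups of finite-index subgroups of $\aut(F_N)$ that identifies $\G_2\cap\innn$ (up to commensurability) as the unique maximal such subgroup, and no such result is cited or proved in your proposal. The paper explicitly flags that there is no obvious formalism deducing commensurator rigidity of $\aut(F_N)$ from that of $\out(F_N)$, and its proof deliberately avoids this reduction: instead, the \cite{BW} classification of $F_2^{2N-3}$ subgroups is used to recognise powers of Nielsen automorphisms (as the only elements centralising such products) and to recognise the inner factor of a maximal untwisted product (as the unique factor containing no Nielsen powers), which yields directly an action of $\Comm(\G)$ on the rank-one and rank-two free factors, i.e.\ on $\mathcal{F}_{(2)}$.

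A secondary but substantial omission: even once an action on a free-factor-type complex is produced, your appeal to known rigidity is off target. The commensurations only give you control of factors of rank at most two, so the relevant rigidity statement is not the Bestvina--Bridson theorem for $\mathcal{AF}_N$ as it stands (and its automorphism group here is $\aut(F_N)$, not $\out(F_N)$ plus a kernel); one needs the new result that every automorphism of $\mathcal{F}_{(2)}$ extends uniquely to $\mathcal{AF}_N$ (Theorem~\ref{t:f2_is_rigid}), proved via the Graded Antipode Lemma and the apartment-crawling argument, which occupies a large part of the paper. Your instinct that the \cite{BW} product subgroups are the key recognition tool, and that trivial centralisers of $\innn$-related subgroups rigidify the ``inner part,'' is in the right spirit (the paper's endgame is exactly Proposition~\ref{l:Ivanov} plus Lemma~\ref{l:centralizers}); but as written the proposal rests on an unproved and probably intractable recognition of $\innn$, so it does not constitute a proof.
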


Our proof of this theorem will also allow us to identify the abstract commensurator of the subgroups of $\aut(F_N)$ that are {\em ample} in the following sense. 
Recall that a \emph{Nielsen automorphism} is one that takes $x_1 \mapsto x_1x_2$ and fixes all other basis elements for some basis $x_1, \ldots, x_N$ of $F_N$.
Recall too that an element is \emph{primitive} if it belongs to some basis of $F_N$.

\begin{definition}[Ample subgroups]
A subgroup $\G < \aut(F_N)$ is \emph{ample} if $\G$ contains a nontrivial power of every Nielsen automorphism (hence a nontrivial power of the inner automorphism determined by each primitive element).
\end{definition}

All finite-index subgroups of $\Aut(F_N)$ are ample. Examples of ample, infinite-index subgroups include the kernels of the natural maps from $\aut(F_N)$ to 
the automorphism groups of free Burnside groups \cite{MR3134411}. 
The Torelli subgroup $\IA_N< \Aut(F_N)$ is not ample (any nontrivial power of a Nielsen automorphism acts nontrivially on $H_1(F_N)$);
we will elucidate its commensurator using different arguments later in the paper. 

To describe the full theorem in the ample case precisely, recall that the \emph{relative commensurator} of a subgroup $\G < G$ is the group \[ \Comm_G(\G) =\{ g \in G : \text{$\G \cap g\G g^{-1}$ and $\G \cap g^{-1}\G g$ have finite-index in $\G$}\}. \] If $\G$ is normal in $G$, then its relative commensurator is $G$ itself. In general, the relative commensurator of $\G<G$ lies between its normalizer $N_G(\G)$ and $G$. The action of $G$ on $\G$ by conjugation induces a homomorphism from 
$\Comm_G(\G)$ to the abstract commensurator of $\G$. For ample subgroups of $\aut(F_N)$, we show that this map is an isomorphism.

\begin{thmA}\label{t:n2}
If $\G$ is an ample subgroup of $\aut(F_N)$ then the natural map \[\Comm_{\aut(F_N)}(\G)\to \Comm(\G)\] is an isomorphism. Furthermore, any isomorphism $f\co \G_1 \to \G_2$ between finite-index subgroups of $\G$ is the restriction of conjugation by an element of $\aut(F_N)$.
\end{thmA}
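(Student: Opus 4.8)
The plan is to reduce Theorem~\ref{t:n2} to a statement about the action of commensurations on a canonical combinatorial object attached to $\aut(F_N)$, namely the free factor complex $\mathcal{FF}_N$ (or a closely related complex on which $\aut(F_N)$ acts with the ``correct'' automorphism group). The key point is that an ample subgroup $\G$ contains enough of the structure of $\aut(F_N)$ to see this complex intrinsically, via the algebra of its direct-product subgroups. Concretely, I would proceed as follows. First, fix an isomorphism $f\co \G_1\to\G_2$ between finite-index subgroups of the ample subgroup $\G$. Since every finite-index subgroup of $\aut(F_N)$ is itself ample, and since passing to a further finite-index subgroup does not change the commensuration class, I may assume $\G_1,\G_2$ are themselves ample.

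**Detecting the canonical structure.** The crucial input is our earlier work \cite{BW} on embeddings of direct products of free groups into $\aut(F_N)$: the maximal-rank direct products of free groups inside $\aut(F_N)$ (and inside any ample subgroup, after taking powers) are, up to commensurability, exactly those supported on a maximal collection of disjoint ``coordinate'' structures — roughly, the stabilizers associated with a system of primitive elements forming part of a basis. I would show that $f$ must send such a maximal product subgroup of $\G_1$ to one of $\G_2$, because maximal rank of a direct-product-of-free-groups subgroup is an isomorphism invariant and \cite{BW} pins down exactly which subgroups realize it. From this, $f$ induces a bijection on the relevant poset of ``parabolic-type'' subgroups, and hence a simplicial automorphism of the associated complex (the free factor complex, or the graph whose vertices are conjugacy classes of corank-one free factors, with adjacency given by having a common complement). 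The ampleness hypothesis is exactly what guarantees that $\G_1$ contains nontrivial powers of all the Nielsen and inner automorphisms needed to recover these subgroups up to finite index, so the induced automorphism is well defined independently of the choices.

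**From the complex automorphism to an inner automorphism.** Next I would invoke the rigidity of the free factor complex: its simplicial automorphism group is $\aut(F_N)$ (for $N\ge 3$ this is the relevant statement, analogous to Ivanov's theorem for the curve complex and its use in \cite{FH,HW2,BM,BPS}). Thus $f$ determines a well-defined element $\alpha\in\aut(F_N)$. Conjugation by $\alpha^{-1}$ composed with $f$ then acts trivially on the canonical complex, and a now-standard argument — comparing the action on the vertex stabilizers, which are themselves rich enough (again by ampleness and \cite{BW}) to have no ``hidden'' automorphisms commuting with the whole picture — forces $\alpha^{-1}\circ f$ to be the identity on a finite-index subgroup. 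Hence $f$ is the restriction of conjugation by $\alpha$. Since $\alpha$ conjugates $\G_1$ into $\G_2$ and both have finite index in $\G$, the element $\alpha$ lies in $\Comm_{\aut(F_N)}(\G)$; this gives surjectivity of $\Comm_{\aut(F_N)}(\G)\to\Comm(\G)$, and injectivity follows because $\aut(F_N)$ is centreless and an inner automorphism trivial on a finite-index subgroup is trivial.

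**Main obstacle.** The genuinely delicate step is the first one: showing that an \emph{abstract} isomorphism $f$ between finite-index subgroups of an ample — possibly infinite-index — subgroup $\G$ must respect the canonical family of direct-product subgroups. For finite-index subgroups of $\aut(F_N)$ this is essentially the content of the commensurator-rigidity proof, but for infinite-index ample $\G$ one cannot use any counting or finite-index bookkeeping; the argument has to be purely structural, relying on the algebraic characterization from \cite{BW} of maximal products of free groups and on the fact that ampleness supplies, for each relevant coordinate subgroup, a specific infinite-order element ($a$ power of a Nielsen or inner automorphism) whose centralizer pattern is visibly preserved by $f$. Making this characterization robust enough to survive an arbitrary isomorphism — in particular, ensuring that the bijection on product subgroups is compatible across overlapping coordinate systems so that it genuinely assembles into a simplicial map — is where the real work lies, and it is the step I expect to occupy the bulk of the proof.
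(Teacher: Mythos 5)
Your overall strategy is the right one and matches the paper's: use the classification of maximal direct products of free groups from \cite{BW} to show a commensuration of an ample subgroup preserves a canonical family of subgroups, induce from this an action on a rigid complex, and then finish with the standard Ivanov-type argument plus triviality of centralizers. The last step and the detection of the product subgroups (via centralizer patterns of powers of Nielsen and inner automorphisms) are essentially as in the paper.

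However, there is a genuine gap at the pivotal middle step, where you assert that the bijection on maximal product subgroups ``hence'' yields a simplicial automorphism of the free factor complex (or of a corank-one factor graph). The subgroup data you can actually extract is much more limited: in a maximal untwisted product $D$ of $2N-3$ free groups, the only factor that is canonically tied to a free factor of $F_N$ is the inner factor, and it is supported on a free factor of rank two; intersecting such rank-two data recovers rank-one factors. So this machinery gives an action of $\Comm(\G)$ only on the subgraph $\mathcal{F}_{(2)}$ of $\mathcal{AF}_N$ spanned by factors of rank one and two, and there is no evident way to see free factors of rank $\ge 3$ (or corank-one factors) from the product subgroups alone --- this is exactly the difficulty the paper flags, and your ``main obstacle'' paragraph, while correctly locating the need for compatibility across coordinate systems, does not address it. The paper closes this gap with a separate rigidity theorem (Theorem~\ref{t:f2_is_rigid}): every automorphism of $\mathcal{F}_{(2)}$ extends uniquely to an automorphism of $\mathcal{AF}_N$, proved via the Graded Antipode Lemma (a Whitehead-graph argument replacing the dynamical Antipode Lemma of \cite{BB}) and a ``crawling'' argument through standard apartments; this occupies Sections~\ref{s:aal} and~\ref{s:extending} and is the bulk of the work. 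Without either this extension theorem or a direct construction of the $\Comm(\G)$-action on higher-rank factors, your appeal to the known rigidity of the full free factor complex \cite{BB} cannot be made, and the proof as proposed does not go through.
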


For a subgroup $\G$ of a group $G$, Ivanov \cite{Iva} (building on ideas of Tits \cite{tits} and Margulis \cite{margulis})
pioneered the following approach to proving commensurator rigidity theorems:

\begin{itemize}
\item Find a complex $X$ which is $G$-rigid (its isometry group is exactly $G$).
\item Show that the action of $\G < G$ on $X$ extends to an action of $\Comm(\G)$.
\item A standard exercise (see Lemma~\ref{l:Ivanov}) then implies that the abstract commensurator of $\G$ is equal to its relative commensurator in $G$.
\end{itemize}

The first candidate complex that comes to mind is the \emph{free factor graph} $\mathcal{AF}_N$, which was shown to be rigid by Bestvina and Bridson \cite{BB}. 
In Section \ref{s:fat_action} we explain how our previous work on direct products in $\aut(F_N)$ allows one to prove that, so long as $\G < \aut(F_N)$ is ample, the action of $\G$ on the free factors of rank one and two extends to an action of $\Comm(\G)$. 
The key result we use here is that powers of Nielsen automorphisms are the \emph{only} automorphisms that centralize subgroups of the form $F_2^{2N-3}$ in $\aut(F_N)$, and so such automorphisms are preserved under commensurations of ample subgroups. A `typical' copy of $F_2^{2N-3} < \aut(F_N)$ has one direct factor consisting of inner automorphisms from a rank two free factor $A<F_N$, while all the other factors contain many Nielsen automorphisms. As powers of Nielsen automorphisms are preserved by $\Comm(\G)$, we can use this observation to induce an action of $\Comm(\G)$ on the set of rank two free factors. Looking at intersections of these subgroups allows us to also keep track of the effect of commensurations on rank one free factors, and thus we obtain an action of $\Comm(\G)$ on the subgraph $\mathcal{F}_{(2)}$ of the free factor graph spanned by rank one and two free factors. However, the action of $\Comm(\G)$ on free factors of rank greater than two is harder to obtain directly. We instead prove the following:

\begin{thmA} \label{t:f2_is_rigid}
Let $\mathcal{F}_{(2)}$ be the subgraph of the free factor graph $\mathcal{AF}_N$ spanned by free factors of rank one and two. Then, every  
automorphism of $\mathcal{F}_{(2)}$ extends to a unique automorphism of $\mathcal{AF}_N$, hence \[ \aut(\mathcal{F}_{(2)}) \cong \aut(\mathcal{AF}_N). \]  
 \end{thmA}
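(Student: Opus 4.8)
The plan is to show that the rank one and rank two free factors, together with their incidence relations recorded in $\mathcal{F}_{(2)}$, contain enough combinatorial information to reconstruct all higher-rank free factors canonically. Concretely, I would fix $\phi \in \aut(\mathcal{F}_{(2)})$ and try to extend it to $\mathcal{AF}_N$ by prescribing, for each free factor $B$ of rank $k$ with $3 \le k \le N-1$, the image $\phi(B)$ as the unique free factor compatible with how $\phi$ permutes the rank-$\le 2$ factors contained in $B$. The first step is therefore to identify, purely in graph-theoretic terms inside $\mathcal{F}_{(2)}$, when a collection of rank one and rank two free factors "spans" a common free factor of a given rank; the natural candidate is to characterise a rank $k$ free factor $B$ by the set $S_B$ of rank one (and rank two) free factors it contains, and to show that $B \mapsto S_B$ is injective and that the sets arising this way are recognisable by the pattern of adjacencies and the "rank" of the smallest free factor containing a given finite subset (rank being itself detected by $\mathcal{F}_{(2)}$-distances or by maximal chains of rank $\le 2$ factors).

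The second step is the verification that this prescription is well defined and really lands in $\mathcal{AF}_N$: one must check that $\phi(S_B)$ is again of the form $S_{B'}$ for a unique free factor $B'$, and that $\phi$ together with $B \mapsto B'$ respects the nesting relation $A < B$ and hence is a genuine graph automorphism of $\mathcal{AF}_N$. For surjectivity of the extension one applies the same construction to $\phi^{-1}$ and checks the two extensions are mutually inverse. Uniqueness of the extension is the easiest point: any automorphism of $\mathcal{AF}_N$ extending $\phi$ must send a rank $k$ free factor $B$ to a free factor containing the image of every rank $\le 2$ factor inside $B$, and a higher-rank free factor is determined by the rank one factors it contains, so there is no freedom. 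The displayed isomorphism $\aut(\mathcal{F}_{(2)}) \cong \aut(\mathcal{AF}_N)$ then follows since restriction $\mathcal{AF}_N \to \mathcal{F}_{(2)}$ is the inverse map, $\mathcal{F}_{(2)}$ being a full subgraph.

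The main obstacle, I expect, is the combinatorial recognition problem in the first step: showing that the rank of a free factor, and the relation "$B$ is the free factor generated by this set of small free factors", are invariants of the abstract graph $\mathcal{F}_{(2)}$ rather than of its embedding in $\aut(F_N)$-data. One delicate point is that a set of rank one free factors $\{[x_1],\dots,[x_k]\}$ coming from part of a basis does generate a rank $k$ free factor, but arbitrary collections of rank one factors need not be "independent", so the characterisation must detect independence intrinsically — presumably via the existence of enough rank two free factors each containing exactly two of the chosen rank one factors and collectively assembling into a tree/partial basis pattern. I would handle this by induction on rank, using rank two free factors as the base case already understood, and at each stage peeling off one rank one free factor to pass from a putative rank $k$ factor to a rank $k-1$ factor whose reconstruction is known by induction; Whitehead-type or Nielsen-type moves on bases, reflected in local moves in $\mathcal{F}_{(2)}$, should supply the needed transitivity to pin down $\phi(B)$ uniquely.
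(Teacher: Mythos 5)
Your outline follows the same broad strategy as the paper (reconstruct each rank~$k$ factor from the small factors it contains, proceed by induction on rank, note that uniqueness is automatic because a factor is generated by its lower link), but the two points you yourself flag as "the main obstacle" are exactly where the real content lies, and your proposal does not supply them. First, you need an intrinsic, graph-theoretic criterion inside $\mathcal{F}_{(2)}$ (and more generally $\mathcal{F}_{(k-1)}$) for when a rank one vertex $\langle u\rangle$ and a higher-rank vertex $A$ are \emph{antipodal}, i.e.\ generate a free factor $A\ast\langle u\rangle$; independence of the subgroup $\langle A,u\rangle$ is not enough, since a primitive $u$ can generate a rank $\mathrm{rk}(A)+1$ subgroup with $A$ that is not a free factor. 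The paper's solution is the Graded Antipode Lemma: if $u\notin A$ and $A\ast\langle u\rangle$ is not a free factor (up to an automorphism preserving $A$), then there is a \emph{witness}, a primitive $p\in A$ with $\langle p,u\rangle$ contained in no rank two free factor. This converts antipodality into a statement about distances among rank $\le 2$ vertices, hence preserved by any automorphism of the subgraph; its proof is a genuinely nontrivial combinatorial argument with Whitehead graphs and Stallings' cut-vertex theorem, and nothing in your sketch ("enough rank two free factors each containing exactly two of the chosen rank one factors") substitutes for it.

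Second, even granted a detectable notion of antipodality, your well-definedness step --- that $\phi(S_B)$ is again the set of small factors of a single free factor $B'$ --- needs an argument that two antipodal configurations (two bases of rank $k$ factors, say) generate the \emph{same} rank $k$ factor in a way visible to the graph. The paper handles this with standard $k$-apartments (subcomplexes spanned by the factors generated by subsets of a basis), characterised metrically by the antipode condition, together with a "crawling" lemma: two standard apartments lie in the link of the same rank $k$ factor if and only if they are joined by a chain of standard apartments in which consecutive ones share an antipodal pair generating that factor; the proof uses that the stabiliser of $A$ acts transitively on bases of $A$ and is generated by inversions and Nielsen maps preserving $A$. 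Your closing remark that "Whitehead-type or Nielsen-type moves, reflected in local moves in $\mathcal{F}_{(2)}$, should supply the needed transitivity" gestures at exactly this, but as stated it is a hope rather than a proof, and one also has to verify first that the rank of a vertex is preserved by an abstract automorphism of $\mathcal{F}_{(k)}$ (the paper does this via the join structure and diameter of links, plus an antipode argument to separate rank one from rank $k$). So the proposal is a reasonable plan with the correct skeleton, but the two lemmas that make it work are missing.
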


The proof, which draws on ideas from \cite{BB}, proceeds one layer at a time, extending automorphisms of $\mathcal{F}_{(2)}$ to automorphisms of $\mathcal{F}_{(3)}$ (the subgraph that also includes rank 3 free factors) and continuing upwards. A key part of the proof of the rigidity of $\mathcal{AF}_N$ in \cite{BB} is the \emph{Antipode Lemma}, which metrically distinguishes \emph{antipodes}  (pairs of vertices that together generate higher rank free factors) in the free factor graph. We have less information in our setting as we are working in a subgraph of $\mathcal{AF}_N$, so the first step is to prove a version of the Antipode Lemma that can be used in the subgraphs $\mathcal{F}_{(k)}$ spanned by factors of rank at most $k$. This is called the \emph{Graded Antipode Lemma}, and is the focus of Section~\ref{s:aal}. The method of proof is a combinatorial argument using Whitehead graphs (whereas the proof of the original Antipode Lemma  \cite{BB} relied on dynamical properties of fully irreducible automorphisms).

The Graded Antipode Lemma allows us to distinguish, in an $\aut(\mathcal{F}_{(2)})$-invariant way, pairs of vertices that generate a rank 3 factor in $\mathcal{F}_{(2)}$. But in order to extend an automorphism of $\mathcal{F}_{(2)}$ to an automorphism of $\mathcal{F}_{(3)}$, we also need to tell when two pairs of antipodes generate the same rank 3 factor using only the metric properties of the graph.  This is the content of Section~\ref{s:extending}, and it is achieved by first showing that $\aut(\mathcal{F}_{(2)})$ preserves the set of \emph{standard 3-apartments}, that is subgraphs spanned by factors corresponding to the subsets of a fixed basis for a rank 3 factor. We then argue that two such apartments $\Delta$ and $\Lambda$ can be connected by a chain of apartments $\Delta=\Delta_0,\Delta_1, \ldots, \Delta_n=\Lambda$ 
in which neighbours share an antipodal pair if only if the vertices of $\Delta$ and $\Lambda$ generate the same rank 3 factor. With this metric characterisation in hand,  we have a canonical way of extending automorphisms of $\mathcal{F}_{(2)}$ to automorphisms of $\mathcal{F}_{(3)}$. A similar argument lets us extend automorphisms of $\mathcal{F}_{(k)}$ to automorphisms of $\mathcal{F}_{(k+1)}$, enabling us to prove Theorem~\ref{t:f2_is_rigid} by induction.

\subsection{The Torelli subgroup and failure of the approach for lower terms of the Andreadakis--Johnson filtration.}

Recall that the \emph{Torelli subgroup} of $\ian$ of $\aut(F_N)$ is the subgroup acting trivially on $H_1(F_N)=F_N/[F_N,F_N]$. As mentioned above, the Torelli subgroup is not ample, however it is still commensurator-rigid:

\begin{thmA} \label{t:torelli}
For all $N\ge 3$, 
the natural map $\aut(F_N)\to {\rm{Comm}}(\ian)$ is an isomorphism. Furthermore, any isomorphism 
 between finite-index subgroups of $\ian$ is the restriction of conjugation by an element of $\aut(F_N)$.
\end{thmA}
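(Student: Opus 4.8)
The plan is to deduce Theorem~\ref{t:torelli} from Theorems~\ref{t:n} and~\ref{t:f2_is_rigid} by showing that, although $\ian$ is not ample, a commensuration of $\ian$ still induces an automorphism of the free factor graph $\mathcal{AF}_N$. The starting point is that powers of Nielsen automorphisms are not available inside $\ian$, so the detection mechanism of Section~\ref{s:fat_action} must be replaced. However, $\ian$ \emph{does} contain inner automorphisms: for a primitive element $x\in F_N$ and the rank-two free factors $A<F_N$, the subgroup $\inn(A)\cong F_2$ lies in $\ian$, and more generally $\ian$ contains the direct products $F_2^{2N-3}$ studied in \cite{BW} \emph{provided} we take the factors to be $\inn(A_i)$ for suitable free factors rather than Nielsen subgroups. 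The first step, then, is to isolate those product subgroups $P\cong F_2^{2N-3}$ of $\ian$ all of whose factors are of the form $\inn(A_i)$, and to prove — using the centralizer calculations from \cite{BW} together with the fact that $\ian$ has finite index in no overgroup other than finite extensions inside $\aut(F_N)$ — that the property ``$P$ has a direct factor equal to $\inn(A)$ for a rank-two free factor $A$, and the centralizer of that factor inside $\ian$ is as large as possible'' is intrinsic to $\ian$ and hence preserved by any commensuration. This gives an action of $\Comm(\ian)$ on the set of rank-two free factors, and intersecting with the other factors of the products recovers the rank-one free factors, exactly as in the ample case; so $\Comm(\ian)$ acts on $\mathcal{F}_{(2)}$.

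The second step is to upgrade this to an action on all of $\mathcal{AF}_N$. By Theorem~\ref{t:f2_is_rigid}, $\aut(\mathcal{F}_{(2)})\cong\aut(\mathcal{AF}_N)$, so the action of $\Comm(\ian)$ on $\mathcal{F}_{(2)}$ extends canonically to an action on $\mathcal{AF}_N$; since $\mathcal{AF}_N$ is $\aut(F_N)$-rigid \cite{BB}, we obtain a homomorphism $\Comm(\ian)\to\aut(F_N)$. The third step is to check, via the Ivanov-style argument (Lemma~\ref{l:Ivanov}), that this homomorphism together with the conjugation map $\aut(F_N)\to\Comm(\ian)$ are mutually inverse. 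Here one uses that $\ian$ is normal in $\aut(F_N)$, so $\Comm_{\aut(F_N)}(\ian)=\aut(F_N)$; that $\ian$ is centreless and has trivial centralizer in $\aut(F_N)$ (true for $N\ge3$, since $\aut(F_N)$ is centreless and $\ian$ is ``large''), which makes the conjugation map injective; and that the composite $\Comm(\ian)\to\aut(F_N)\to\Comm(\ian)$ is the identity because an isomorphism between finite-index subgroups of $\ian$ that fixes every rank-one and rank-two free factor must be realized by an element of $\aut(F_N)$ — one recovers the element by its action on inner automorphisms $\inn(x)$ for primitive $x$, which is pinned down by the induced action on $\mathcal{F}_{(1)}\subset\mathcal{F}_{(2)}$, and then a rigidity/centralizer argument shows the commensuration agrees with conjugation by that element on a finite-index subgroup.

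The main obstacle I expect is the first step: detecting the distinguished product subgroups and the ``inner rank-two'' factors purely group-theoretically inside $\ian$, without recourse to Nielsen automorphisms or to the homological action (which is trivial on $\ian$ by definition). In the ample case one exploits that a Nielsen power has enormous centralizer and this characterizes it; inside $\ian$ one must instead characterize $\inn(A)$ for $A$ a rank-two free factor by some extremal property of its $\ian$-centralizer among all subgroups isomorphic to $F_2$ that arise as factors of an embedded $F_2^{2N-3}$. Making this precise requires a careful analysis — presumably an extension of the computations in \cite{BW} — of centralizers in $\ian$ of the various $F_2$-subgroups, showing that the inner ones are distinguished by the rank or structure of their centralizer; one must also rule out ``exotic'' product subgroups of $\ian$ not coming from the \cite{BW} construction, or show that any commensuration carries the canonical ones to canonical ones up to finite index. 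Once the rank-two free factor detection is in place, propagating to rank one and then invoking Theorems~\ref{t:f2_is_rigid} and the rigidity of $\mathcal{AF}_N$ should be comparatively routine, following the template already laid out for Theorems~\ref{t:n} and~\ref{t:n2}.
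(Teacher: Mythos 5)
Your overall template (produce an action of $\Comm(\ian)$ on $\F_{(2)}$ via the product subgroups of \cite{BW}, intersect to recover rank-one factors, then invoke Theorem~\ref{t:f2_is_rigid}, the rigidity of $\mathcal{AF}_N$, Lemma~\ref{l:Ivanov} and the triviality of centralizers) is exactly the paper's, but the step you yourself flag as ``the main obstacle'' --- how a commensuration of $\ian$ is forced to respect the inner rank-two factor --- is precisely the step the paper supplies and your proposal does not. Worse, your description of the relevant subgroups is incorrect: $\ian$ does \emph{not} contain a direct product $F_2^{2N-3}$ all of whose factors are of the form $\inn(A_i)$, since two nonabelian inner subgroups $\inn(A)$ and $\inn(B)$ never commute in $\inn(F_N)\cong F_N$ (centralizers in a free group are cyclic); in any product of $2N-3$ nonabelian free groups in $\aut(F_N)$ at most one factor is inner, and in $\ian$ the non-inner factors of a maximal product are the intersections $L_i\cap\G$, $R_i\cap\G$ of transvection subgroups with $\G$. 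Your fallback --- distinguishing the inner factor by an extremal property of its centralizer \emph{inside} $\ian$ --- is not worked out and is genuinely delicate: the centralizer statements of \cite{BW} are computed in $\aut(F_N)$, and the elements that make those centralizers nontrivial (powers of Nielsen automorphisms) are exactly the elements absent from $\ian$, so the ample-case detection mechanism has no direct analogue here.

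The paper's missing idea is Proposition~\ref{p:maximal_products_in_torelli}: for $\G$ of finite index in $\ian$, every maximal $D\in\mathcal{D}(\G)$ is automatically untwisted (a twisted factor would contain an automorphism sending $a_1\mapsto a_1a_2^k$ times something homologically trivial, contradicting triviality of the action on $H_1(F_N)$), hence is standard, $D=(L_1\cap\G)\times\cdots\times(R_{N-2}\cap\G)\times(I\cap\G)$; and in this product the inner factor is the \emph{unique finitely generated} direct factor, because $L_i\cap\ian=[L_i,L_i]$ is an infinitely generated free group whereas $I\subset\ian$ so $I\cap\G$ has finite index in $I$ and is finitely generated. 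Finite generation of a direct factor is manifestly preserved by any isomorphism between finite-index subgroups, which is what makes the rank-two factor map $f_*$ well defined (Proposition~\ref{p:factor_map_torelli}) without any centralizer analysis. Note also that maximality with respect to inclusion is essential for this characterisation (a non-maximal product can have finitely generated non-inner factors), a point your ``as large as possible'' formulation does not capture. Once this detection lemma is in place, the remainder of your outline (normality of $\ian$ giving $\Comm_{\aut(F_N)}(\ian)=\aut(F_N)$, trivial centralizers of finite-index subgroups of $\ian$ via powers of primitive inner automorphisms, and Lemma~\ref{l:Ivanov}) does match the paper's conclusion of the argument.
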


We again deduce rigidity by constructing an action of $\Comm(\ian)$ on $\mathcal{F}_{(2)}$, however we need to use a different method to the ample case. Rather than taking the most general approach possible, we use a property that is specific to subgroups $\G$ commensurable with $\ian$: in this case, for a maximal direct product of free groups $D< \Gamma$ (where maximal means with respect to both the number of factors and containment), our classification theorem in \cite{BW} implies that the factor of $D$ containing   inner automorphisms   is the unique direct factor which is finitely generated. This property allows us to obtain an action of $\Comm(\ian)$ on the rank two factors and, again, by using intersections, we can  extend this action to the set of rank one factors. Thus we obtain an action on $\mathcal{F}_{(2)}$
and can appeal to Theorem \ref{t:f2_is_rigid}.

The Torelli subgroup is the first in a descending chain of subgroups of $\aut(F_N)$ called the \emph{Andreadakis--Johnson filtration} (introduced in \cite{MR188307}), defined as follows: the $k$th subgroup in the series is the kernel of the map \[ \aut(F_N) \to \aut(F_N/\gamma_{k+1}(F_N)), \] where $\gamma_{k+1}(F_N)$ is the $k+1$ term of the lower central series of $F_N$. While it seems likely that all of these groups are commensurator rigid (indeed this is known to be the case for $\out(F_N)$ \cite{HW2}), the methods used in this paper are not immediately applicable to these subgroups. In more detail, if $\G$ has finite index in the Torelli subgroup or else $\G$ is  ample, then $\G$ contains a nontrivial power of every inner automorphism induced by a primitive element; it is these inner automorphisms that we use to control the action of $\Comm(\G)$ on the rank one free factors. Such automorphisms are not contained in higher terms of the Andreadakis--Johnson filtration and it is not clear to us how to regain control over the rank one free factors when
working with these subgroups. It is plausible that one needs to use a different rigid $\aut(F_N)$--graph to prove commensurator rigidity for terms further down the filtration.

\subsection{Low-rank cases}

As $\aut(F_1)\cong \mathbb{Z}/2\mathbb{Z}$, its abstract commensurator is trivial. The abstract commensurator of $\aut(F_2)$ is more interesting:

\begin{thmA}\label{t:comm_f2}
The abstract commensurator of $\aut(F_2)$ is isomorphic to the extended mapping class group of a sphere with five punctures. Furthermore, $\aut(F_2)$ is an index-five subgroup of its abstract commensurator.
\end{thmA}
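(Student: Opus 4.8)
The plan is to reduce the statement to Ivanov's commensurator rigidity for mapping class groups, the one substantive input being the classical fact that $\aut(F_2)$ is abstractly commensurable with $\Mod(S_{0,5})$, the mapping class group of the five-punctured sphere; I indicate how to see this below. Granting it, the argument is formal. By Ivanov's theorem \cite{Iva} (the surface $S_{0,5}$ is not among the low-complexity exceptions), the extended mapping class group $\Mod^{\pm}(S_{0,5})$ is commensurator-rigid, and $\Mod(S_{0,5})$ has index two in it; since abstract commensurators depend only on the commensurability class of a group, $\Comm(\aut(F_2)) \cong \Comm(\Mod(S_{0,5})) \cong \Mod^{\pm}(S_{0,5})$. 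The map $\ad \co \aut(F_2) \to \Comm(\aut(F_2))$ is injective: if $\psi \in \aut(F_2)$ centralised a finite-index subgroup it would centralise a finite-index subgroup of $\inn(F_2) \cong F_2$, hence fix a finite-index subgroup of $F_2$ pointwise, hence equal $1$ by uniqueness of roots in free groups. Being commensurable with $\Mod(S_{0,5})$, which has finite index in its own commensurator, $\aut(F_2)$ has finite index in $\Comm(\aut(F_2))$; and since the rational Euler characteristic is multiplicative under extensions and passage to finite-index subgroups,
\[
[\Comm(\aut(F_2)):\aut(F_2)] \;=\; \frac{\chi(\aut(F_2))}{\chi(\Mod^{\pm}(S_{0,5}))}\,.
\]
The exact sequence $1\to\inn(F_2)\to\aut(F_2)\to\out(F_2)\to 1$, with $\inn(F_2)\cong F_2$ and $\out(F_2)\cong\gl_2(\Z)={\rm{SL}}_2(\Z)\rtimes\Z/2\cong(\Z/4\,*_{\Z/2}\,\Z/6)\rtimes\Z/2$, gives $\chi(\aut(F_2))=(-1)\cdot\tfrac12(\tfrac14+\tfrac16-\tfrac12)=\tfrac1{24}$; iterating the Birman exact sequence gives $\chi({\rm{PMod}}(S_{0,5}))=\chi(F_3)\,\chi(F_2)=2$, whence $\chi(\Mod^{\pm}(S_{0,5}))=2/(2\cdot 5!)=\tfrac1{120}$. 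The ratio is $5$.

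So it remains to see that $\aut(F_2)$ is commensurable with $\Mod(S_{0,5})$. First, $\aut(F_2)$ is itself a mapping class group: if $\Sigma$ is a once-punctured torus equipped with an interior marked point $p$, so that $\pi_1(\Sigma,p)\cong F_2$, then the action on $\pi_1(\Sigma,p)$ gives a homomorphism $\Mod^{\pm}(\Sigma,p)\to\aut(F_2)$, and the five lemma applied to the morphism from the Birman sequence $1\to\pi_1(\Sigma,p)\to\Mod^{\pm}(\Sigma,p)\to\Mod^{\pm}(\Sigma)\to 1$ to $1\to\inn(F_2)\to\aut(F_2)\to\out(F_2)\to 1$ shows it is an isomorphism — point-pushing maps isomorphically onto $\inn(F_2)$, and $\Mod^{\pm}(\Sigma)\cong\out(F_2)\cong\gl_2(\Z)$ by Dehn--Nielsen--Baer, the peripheral conjugacy class $[[a,b]]$ of the once-punctured torus imposing no constraint because its $\out(F_2)$-orbit is just $\{[[a,b]],[[a,b]^{-1}]\}$. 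Thus $\aut(F_2)\cong{\rm{PMod}}^{\pm}(S_{1,2})$, the extended pure mapping class group of the twice-punctured torus, which has finite index in $\Mod^{\pm}(S_{1,2})$. Finally, $\Mod(S_{1,2})$ is commensurable with $\Mod(S_{0,5})$ — this is one of the classical coincidences among the mapping class groups of low-complexity surfaces (in the same family as $\Mod(S_{1,1})\sim\Mod(S_{0,4})$ and $\Mod(S_2)\sim\Mod(S_{0,6})$), visible through the hyperelliptic symmetry — so $\aut(F_2)\sim\Mod(S_{0,5})$, as required.

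I expect the principal difficulty to lie entirely in this second half: fixing the identification $\aut(F_2)\cong{\rm{PMod}}^{\pm}(S_{1,2})$ and the commensurability of $\Mod(S_{1,2})$ with $\Mod(S_{0,5})$ with all conventions correct — orientation-reversing mapping classes, the distinction between punctures and interior marked points, and the faithfulness and kernel subtleties that genuinely occur for the small surfaces involved ($S_{1,1}$, $S_{1,2}$, $S_{0,4}$). Once the commensurability of $\aut(F_2)$ with $\Mod(S_{0,5})$ is in place, the rest is a formal consequence of Ivanov's theorem together with the Euler characteristic computation above.
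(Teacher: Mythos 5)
Your proposal is sound and, in outline, it follows the same geometric route as the paper's Section~6 (rather than the short proof given in the introduction via Dyer--Formanek--Grossman and Charney--Crisp): identify $\aut(F_2)$ with the extended pure mapping class group of the twice-punctured torus, pass to the five-punctured sphere via the hyperelliptic involution, and invoke commensurator rigidity for $\modd(S_{0,5})$. Within that skeleton you do two things genuinely differently, and both work. You prove $\aut(F_2)\cong\pmodd(S_{1,2})$ by comparing the extended Birman exact sequence with $1\to\inn(F_2)\to\aut(F_2)\to\out(F_2)\to 1$ and using Dehn--Nielsen--Baer for $S_{1,1}$ (your remark that the peripheral class of $[a,b]$ imposes no constraint is exactly Remark~\ref{r:mu}), whereas Proposition~\ref{p:aut_tours} realises Nielsen generators by explicit affine homeomorphisms; and you extract the index $5$ from rational Euler characteristics ($\chi(\aut(F_2))=1/24$, $\chi(\modd(S_{0,5}))=1/120$), needing only abstract commensurability of $\aut(F_2)$ with $\smod(S_{0,5})$, whereas the paper pins down the image of $\aut(F_2)$ as the index-five puncture stabiliser (Proposition~\ref{p:last}), or in the introduction quotes the index-$10$ statement for $B_4/Z(B_4)$. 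Two caveats. First, the rigidity $\Comm(\modd(S_{0,5}))\cong\modd(S_{0,5})$ is not covered by Ivanov's original theorem --- genus zero lies outside its range --- it is Korkmaz's extension, which is what the paper cites; your parenthetical that $S_{0,5}$ is ``not among the low-complexity exceptions'' should be replaced by that citation. Second, the ``classical coincidence'' $\Mod(S_{1,2})\sim\Mod(S_{0,5})$ that you cite is precisely the step the paper has to establish: the Birman--Hilden transfer needs every mapping class of $S_{1,2}$ (including orientation-reversing and puncture-swapping ones) to admit a representative commuting with $\sigma$ strictly, which is Lemma~\ref{l:symmetric}, before Proposition~\ref{p:last} gives the finite-index embedding. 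This is standard Birman--Hilden material (using that $\sigma$ is central), so the appeal is legitimate, but as you anticipate it is the crux of the argument rather than a footnote, and a complete write-up would either prove it as the paper does or cite it precisely.
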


\begin{proof}
Dyer, Formanek, and Grossman show that the unique index-two subgroup $\saut(F_2)$ of $\aut(F_2)$ (the kernel of the determinant map for the action on homology) is isomorphic to the quotient of the four strand braid group $B_4$ by its centre $Z_4$ \cite[p. 406]{MR666911}. Hence $\Comm(\aut(F_2))\cong \Comm(B_4/Z_4)$. Using work of Korkmaz \cite{MR1696431}, Charney and Crisp proved that $\Comm(B_4/Z_4)$ is isomorphic to the extended mapping class group of a five-punctured sphere \cite[Theorem~1] {MR2150887}. They furthermore show that $B_4/Z_4$ has index 10 in its abstract commensurator, and it follows that $\aut(F_2)\hookrightarrow \Comm(\aut(F_2))$ is of index 5,
since any automorphism fixing a finite-index subgroup of $F_2$ is trivial. 
\end{proof}

In an appendix to this paper we give a geometric proof of the result of Dyer, Formanek, and Grossman. We also explain how the non-trivial commensurations of $\aut(F_2)$ arise. Briefly, it is because $\aut(F_2)$ is the extended pure mapping class group of a twice-punctured torus, and the five-punctured sphere appears as the quotient of this torus by its hyperelliptic involution.

\subsection{A brief outline of the paper}

In Section~\ref{s:aal} we prove the Graded Antipode Lemma and in Section~\ref{s:extending} we use it to prove that the group of 
automorphisms of the graph $\mathcal{F}_{(2)}$ is isomorphic to $\aut(F_N)$ (Theorem~\ref{t:f2_is_rigid}). In Section~\ref{s:fat_action}, for any ample subgroup $\G < \aut(F_N)$ we construct a natural action of $\Comm(\G)$ on $\mathcal{F}_{(2)}$, and we use the rigidity of $\mathcal{F}_{(2)}$ to deduce commensurator rigidity of $\G$ (Theorem~\ref{t:n2}). Commensurator rigidity of $\aut(F_N)$ (Theorem~\ref{t:n}) follows directly from Theorem~\ref{t:n2}. In Section~\ref{s:torelli}, we complete the proof of Theorem~\ref{t:torelli} by showing that the abstract commensurator $\Comm(\ian)$ of the Torelli subgroup $\ian$ also admits a natural action on $\mathcal{F}_{(2)}$.

\subsection{Acknowledgements}

We thank Mladen Bestvina and Camille Horbez for many fruitful conversations surrounding the material in this paper.
We thank Matt Clay for asking about the abstract commensurator of $\aut(F_2)$ and for his suggestions concerning its geometry. We also thank the referee for their careful reading and helpful comments. The second author is supported by a University Research Fellowship from the Royal Society.

\section{Whitehead Graphs and the Graded Antipode Lemma} \label{s:aal}

In the first two sections of  the paper, we show that every automorphism of $\F_{(2)}$ extends to an automorphism of the whole free factor graph. Applying the main theorem of \cite{BB}, this shows that $\mathcal{F}_{(2)}$ is rigid. The first step, and the subject of this first section, is a generalization of Bestvina and Bridson's \emph{Antipode Lemma}, which we shall prove using \emph{Whitehead graphs} for elements of $F_N$.

\subsection{Background on Whitehead graphs and automorphisms}

Fix a basis $X=\{x_1, \ldots, x_n\}$ of $F_N$. Let $\mathcal{X}=X \cup X^{-1}\cup \{o\}$ be the set of letters and their inverses in this basis, along with a distinguished letter $o$ that will appear as a basepoint imminently. It is convenient to write $\bar{x}_i:=x_i^{-1}$. Given an element $u \in F_N$, its \emph{Whitehead graph} is the graph $W(u)$ whose vertex set is $\mathcal{X}$, with an edge joining $x$ to $\bar{y}$ for each pair of consecutive letters $xy$ in $u$ and two additional
edges from the basepoint $o$ to $\overline{s}$ and $t$, where $s$ and $t$ are the initial and terminal letters of $u$,
respectively.

\begin{figure}[ht]
\centering
\begingroup%
  \makeatletter%
  \providecommand\color[2][]{%
    \errmessage{(Inkscape) Color is used for the text in Inkscape, but the package 'color.sty' is not loaded}%
    \renewcommand\color[2][]{}%
  }%
  \providecommand\transparent[1]{%
    \errmessage{(Inkscape) Transparency is used (non-zero) for the text in Inkscape, but the package 'transparent.sty' is not loaded}%
    \renewcommand\transparent[1]{}%
  }%
  \providecommand\rotatebox[2]{#2}%
  \newcommand*\fsize{\dimexpr\f@size pt\relax}%
  \newcommand*\lineheight[1]{\fontsize{\fsize}{#1\fsize}\selectfont}%
  \ifx\svgwidth\undefined%
    \setlength{\unitlength}{97.78051542bp}%
    \ifx\svgscale\undefined%
      \relax%
    \else%
      \setlength{\unitlength}{\unitlength * \real{\svgscale}}%
    \fi%
  \else%
    \setlength{\unitlength}{\svgwidth}%
  \fi%
  \global\let\svgwidth\undefined%
  \global\let\svgscale\undefined%
  \makeatother%
  \begin{picture}(1,0.7456781)%
    \lineheight{1}%
    \setlength\tabcolsep{0pt}%
    \put(0,0){\includegraphics[width=\unitlength,page=1]{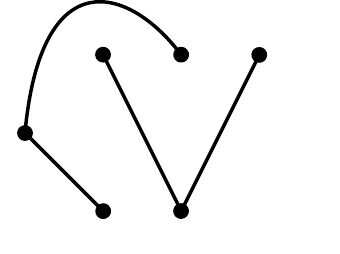}}%
    \put(-0.00318344,0.27750328){\color[rgb]{0,0,0}\makebox(0,0)[lt]{\lineheight{1.25}\smash{\begin{tabular}[t]{l}$o$\end{tabular}}}}%
    \put(0.30362612,0.00904474){\color[rgb]{0,0,0}\makebox(0,0)[lt]{\lineheight{1.25}\smash{\begin{tabular}[t]{l}$\bar{x}_1$\end{tabular}}}}%
    \put(0.53465738,0.01104704){\color[rgb]{0,0,0}\makebox(0,0)[lt]{\lineheight{1.25}\smash{\begin{tabular}[t]{l}$\bar{x}_2$\end{tabular}}}}%
    \put(0.30008366,0.65316016){\color[rgb]{0,0,0}\makebox(0,0)[lt]{\lineheight{1.25}\smash{\begin{tabular}[t]{l}$x_1$\end{tabular}}}}%
    \put(0.54035629,0.66378763){\color[rgb]{0,0,0}\makebox(0,0)[lt]{\lineheight{1.25}\smash{\begin{tabular}[t]{l}$x_2$\end{tabular}}}}%
    \put(0,0){\includegraphics[width=\unitlength,page=2]{W1.pdf}}%
    \put(0.7661509,0.66363325){\color[rgb]{0,0,0}\makebox(0,0)[lt]{\lineheight{1.25}\smash{\begin{tabular}[t]{l}$x_3$\end{tabular}}}}%
    \put(0.76260834,0.01104704){\color[rgb]{0,0,0}\makebox(0,0)[lt]{\lineheight{1.25}\smash{\begin{tabular}[t]{l}$\bar{x}_3$\end{tabular}}}}%
  \end{picture}%
\endgroup%

 \caption{The Whitehead graph of $u=x_1x_2x_3x_2$.}\label{f:W1}
\end{figure}

Suppose $C \cup C'$ is a partition of $\calx$ such that $a \in C$ and $\bar{a} \in C'$. If $o \not\in C$ then the associated {\em Whitehead automorphism }$\phi=\phi(C;a)$ fixes $a$ and is defined on the remainder of $X$ by:

\[
\phi(x)=\begin{cases} x & \text{if $x, \overline{x} \in C'$ } \\
xa &\text{if $x \in C$ and $\overline{x} \in C'$} \\
\bar{a}x &\text{if $x \in C'$ and $\bar{x} \in C$} \\
\bar{a}xa &\text{if $x, x' \in C$}
\end{cases}
\]

 If $o \in C$, we alter $\phi$ by an inner automorphism and instead define $\phi$ by:

\[
\phi(x)=\begin{cases} ax\bar{a} & \text{if $x, \overline{x} \in C'$ } \\
ax &\text{if $x \in C$ and $\overline{x} \in C'$} \\
x\bar{a} &\text{if $x \in C'$ and $\bar{x} \in C$} \\
x &\text{if $x, x' \in C$}
\end{cases}
\]

This shift is used by Whitehead \cite{W1} but is not common in the literature, as it is not necessary when dealing with cyclic words (i.e. words up to conjugacy).  Importantly for us, if both $x$ and $\bar{x}$ are on the same side of the partition as $o$, then they are fixed by the associated Whitehead automorphism. We call $a$ the \emph{acting letter} of the automorphism. We say that a connected component of $W(u)$ is \emph{nontrivial} if it is not an isolated vertex. The following lemma is standard.

\begin{lemma}[Basic properties of $W(u)$] \label{l:basic}
For any letter $x$, the valences of $x$ and $\bar{x}$ are equal. If $W(u)$ has two or more nontrivial connected components then each nontrivial component $C$ contains an element $x \in C$ such that $\bar{x} \not \in C$.
\end{lemma}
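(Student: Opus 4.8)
The plan is to establish both assertions by direct bookkeeping from the definition of $W(u)$, reading $W(u)$ as a multigraph (one edge per occurrence of a consecutive pair, so that $o$ always has valence $2$); this multigraph convention is what is needed for the first statement. We may assume $u\ne 1$, and we write the reduced word for $u$ as $l_1 l_2 \cdots l_k$, so that $s=l_1$ and $t=l_k$; for a letter $z\in X\cup X^{-1}$ let $n_z(u)$ denote the number of indices $j$ with $l_j=z$.

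For the equality of valences I would sort the edges incident to a vertex $z\in X\cup X^{-1}$ into four types: edges $\{z,\bar y\}$ arising from a consecutive pair $zy$ of $u$; edges $\{x,z\}$ arising from a consecutive pair $x\bar z$ of $u$; the edge $\{o,\bar s\}$, which is incident to $z$ exactly when $z=\bar s$; and the edge $\{o,t\}$, which is incident to $z$ exactly when $z=t$. The number of consecutive pairs of $u$ with first letter $z$ is $n_z(u)-[\,z=t\,]$, and the number with second letter $\bar z$ is $n_{\bar z}(u)-[\,z=\bar s\,]$; adding in the two $o$-edges exactly cancels these corrections, so the valence of $z$ equals $n_z(u)+n_{\bar z}(u)$. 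This expression is symmetric under $z\leftrightarrow\bar z$, which is the claim (the valence of $o$ itself is $2$ and plays no role). The same fact can be seen invariantly: $W(u)$ is the cyclic Whitehead graph of the cyclically reduced word $ou$ over the basis $X\cup\{o\}$, with the vertices $o$ and $\bar o$ identified, and symmetry of valences is the standard fact for cyclic Whitehead graphs.

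For the second assertion I would argue by contradiction: suppose $W(u)$ has at least two nontrivial components and that some nontrivial component $C$ is closed under $x\mapsto\bar x$. Then its complement $C^{c}=\mathcal{X}\setminus C$ is also closed under this involution, and, since $C$ is a connected component, no edge of $W(u)$ joins $C$ to $C^{c}$. Running along the reduced word, for each consecutive pair $l_j l_{j+1}$ the non-crossing edge $\{l_j,\bar l_{j+1}\}$ together with the inversion-closedness of $C$ forces $l_j\in C\Leftrightarrow l_{j+1}\in C$; hence either every letter of $u$ lies in $C$ or every letter lies in $C^{c}$. In the first case $\bar s\in C$, so the $o$-edge $\{o,\bar s\}$ forces $o\in C$, and then every edge of $W(u)$ has both endpoints in $C$; thus $C^{c}$ consists only of isolated vertices and $W(u)$ has a unique nontrivial component, contradicting the hypothesis. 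In the second case $\bar s\in C^{c}$, so $\{o,\bar s\}$ forces $o\in C^{c}$, and then every edge of $W(u)$ lies inside $C^{c}$, so $C$ has no edges at all, contradicting that $C$ is nontrivial. Either way we reach a contradiction, so no nontrivial component is inversion-closed; equivalently, each nontrivial component $C$ contains some $x$ with $\bar x\notin C$ (such an $x$ necessarily lies in $X\cup X^{-1}$).

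The only delicate point is the role of the basepoint, and I would stress that the two edges $\{o,\bar s\}$ and $\{o,t\}$ are exactly what couple the status of $o$ to that of the first and last letters of $u$; this makes the membership-propagation argument in the second part work uniformly, so there is no need to distinguish the cases $o\in C$ and $o\notin C$. Apart from this, both parts are routine.
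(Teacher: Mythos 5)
Your proof is correct. Note that the paper offers no argument for this lemma at all --- it is simply asserted to be standard --- so there is no proof to compare against; your direct verification (the multigraph valence count $n_z(u)+n_{\bar z}(u)$, which is visibly symmetric under $z\leftrightarrow\bar z$, and the propagation-of-membership argument along consecutive letters, with the two basepoint edges $\{o,\bar s\}$ and $\{o,t\}$ tying $o$ to the side containing the letters of $u$) is exactly the kind of routine bookkeeping the authors had in mind, and it correctly handles the only delicate points: reading $W(u)$ with edge multiplicities, and the role of the vertex $o$.
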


\begin{definition}
A letter $a \in \mathcal{X}$ is a \emph{reducing letter of $u$} if $a \neq o$ and either
\begin{itemize}
\item $a$ is a cut vertex in its connected component of $W(u)$, else
\item $a$ or $\bar{a}$ appears in $u$ and the connected component of $W(u)$ containing $a$ does not contain $\bar{a}$.
\end{itemize}
\end{definition}

Lemma~\ref{l:basic} tells us that any Whitehead graph with two or more nontrivial connected components contains a reducing letter.

\begin{lemma}[Whitehead \cite{W1}] Let $a \in X \cup X^{-1}$ be a basis element or its inverse. Let $u \in F_N$ and let $W(u)$ be its Whitehead graph.
\begin{itemize}
\item If $C$ is a component of $W(u) \ssm \{a\}$ disjoint from $\bar{a}$ and $\phi=\phi(C\cup\{a\},a)$ then \[ |u| - |\phi(u)| \] is equal to the number of edges between $a$ and elements of $C$. 
\item If $a$ is a reducing letter for $u$ then there exists a Whitehead automorphism $\phi$ with acting letter $a$ such that $|\phi(u)| < |u|$. 
\end{itemize}
\end{lemma}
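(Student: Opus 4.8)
The plan is to establish the first bullet by a direct length computation and then deduce the second bullet from the component structure of $W(u)$. Fix a reduced expression $u = a_1 a_2 \cdots a_n$, put $A = C \cup \{a\}$, and suppose first that $o \notin C$; the case $o \in C$ reduces to this one by passing to the cyclic word $o\,a_1 \cdots a_n$, whose Whitehead graph is again $W(u)$ and for which the defining conventions are arranged so that $|\phi(u)|$ is one less than the cyclic length of the image of this word under $\phi$, in either case — one then runs the same computation there. Under the assumption $o \notin C$, each $\phi(a_i)$ has the shape $\ell_i\, a_i\, r_i$ with a possible prefix letter $\ell_i \in \{\bar a, \varepsilon\}$ and a possible suffix letter $r_i \in \{a, \varepsilon\}$, and — uniformly over all letters of $\calx$, including $a$ and $\bar a$, since $C = A \ssm \{a\}$ and $\bar a \notin A$ — one has $\ell_i = \bar a$ exactly when $\overline{a_i} \in C$ and $r_i = a$ exactly when $a_i \in C$. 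Consequently, before free reduction, the word $\phi(a_1) \cdots \phi(a_n)$ has length $n + \#_C(u) + \#_{\bar C}(u)$, where $\#_C(u)$ and $\#_{\bar C}(u)$ count the occurrences in $u$ of letters from $C$, respectively from $\bar C = \{\bar c : c \in C\}$.

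Next I would work out the free reduction of $\phi(a_1) \cdots \phi(a_n)$. Because $u$ is reduced, the middle letters $a_i$ and $a_{i+1}$ across any joint never cancel, and a short case check (again using reducedness) shows that the only cancellation at the joint between $\phi(a_i)$ and $\phi(a_{i+1})$ deletes exactly two letters and occurs precisely when $a_i \in A$ and $\overline{a_{i+1}} \in A$, i.e. when the edge of $W(u)$ arising from the corner $a_i a_{i+1}$ has both endpoints in $A$; in particular no cancellation propagates past a joint. Writing $E_{AA}$ for the number of edges of $W(u)$ with both endpoints in $A$, we obtain $|\phi(u)| = n + \#_C(u) + \#_{\bar C}(u) - 2E_{AA}$, and hence $|u| - |\phi(u)| = 2E_{AA} - \#_C(u) - \#_{\bar C}(u)$.

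The final step is a short combinatorial identity turning the right-hand side into the claimed edge count. Since $C$ is a component of $W(u) \ssm \{a\}$ with $a \notin C$, no edge of $W(u)$ joins $C$ to $\calx \ssm A$: such an edge would survive the deletion of $a$ and would connect $C$ to its complement. Summing $|e \cap C|$ over the edges $e$ of $W(u)$ (and using $o \notin C$) therefore gives $\#_C(u) + \#_{\bar C}(u) = 2E_{CC} + E_{Ca}$, where $E_{CC}$ is the number of edges with both endpoints in $C$ and $E_{Ca}$ is the number of edges joining $a$ to $C$; moreover $E_{AA} = E_{CC} + E_{Ca}$, because reducedness of $u$ forbids loops at $a$. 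Substituting, $|u| - |\phi(u)| = 2(E_{CC} + E_{Ca}) - (2E_{CC} + E_{Ca}) = E_{Ca}$, which is exactly the number of edges between $a$ and the elements of $C$.

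The second bullet then follows immediately. If $a$ is a cut vertex of its component $K$ of $W(u)$, then $K \ssm \{a\}$ has at least two components, each of which is joined to $a$ by an edge and at most one of which contains $\bar a$; choosing $C$ to be one that does not contain $\bar a$, the first bullet gives $|\phi(u)| = |u| - E_{Ca} \le |u| - 1$ for $\phi = \phi(C \cup \{a\}, a)$. If instead $a$ or $\bar a$ occurs in $u$ while $\bar a$ lies outside the component $K$ of $a$, then $K$ has at least two vertices, $a$ is joined by an edge to some component $C$ of $K \ssm \{a\}$, and $C$ automatically avoids $\bar a$; the first bullet again yields a strict reduction. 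I expect the main difficulty to lie in the bookkeeping around the basepoint $o$ — in particular the reduction of the case $o \in C$ to the case $o \notin C$ — and in checking that the cancellations in $\phi(a_1) \cdots \phi(a_n)$ are strictly local; by contrast the combinatorial identity in the third paragraph, though the heart of the argument, is very short once the component property of $C$ is invoked.
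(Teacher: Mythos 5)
Your argument is correct and is essentially the cancellation count that the paper (following Whitehead) sketches: your bookkeeping via the unreduced length, the local two-letter cancellations at corners with both endpoints in $A$, and the handshake identity over the component $C$ is the same analysis as the paper's ``no new letters survive, and each edge from $a$ to $C$ forces one cancellation,'' and your deduction of the second bullet from the first is exactly the intended one. The only thin point is the one-sentence treatment of the case $o\in C$: the asserted bridge --- that the cyclic length of the image of the cyclic word $o\,a_1\cdots a_n$ equals $|\phi(u)|+1$ --- is true, but it deserves the short verification that the shifted automorphism is the unshifted one composed with conjugation by $a$ and that $o$ can never cancel, since this is precisely where the shifted convention in the paper matters (with the unshifted table the length formula genuinely fails when $o\in C$).
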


\begin{proof}[Sketch proof]
This is contained on pages 50-51 of Whitehead \cite{W1}. The first point follows by considering the cancellations introduced by $\phi$ (no new letters are added and we have cancellations for each edge between $a$ and a letter in $C$). For the second point, if $a$ is reducing, then we can find an appropriate component $C$ of $W(u)\ssm \{a\}$ that allows us to apply the second bullet point.
\end{proof}

Whitehead showed that primitive elements that are not part of the fixed  basis $X$ can always be reduced using the process above.

\begin{theorem}[Whitehead's cut-vertex theorem \cite{W1}] \label{t:Whitehead}
If $u \in F_N$ is primitive then either $u \in \mathcal{X}$ or else $u$ contains a reducing letter.
\end{theorem}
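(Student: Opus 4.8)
The plan is to deduce this from Whitehead's peak-reduction theorem, the length-change formula for Whitehead automorphisms, and Lemma~\ref{l:basic}. We may assume $|u|\ge 2$, since otherwise $u\in X\cup X^{-1}\subseteq\calx$. As $u$ is primitive there is $\alpha\in\aut(F_N)$ with $\alpha(u)\in X$, so the $\aut(F_N)$-orbit of $u$ contains an element of length $1$; as $|u|\ge 2$, the element $u$ is therefore not of minimal length in its orbit. By Whitehead's peak-reduction theorem \cite{W1} there is then a partition $\calx=C\sqcup C'$ with $a\in C$ and $\bar a\in C'$ such that $|\phi(C;a)(u)|<|u|$; and since $\phi(C';\bar a)=\phi(C;a)$ whenever $o\in C$, we may assume $C\subseteq X\cup X^{-1}$, so that $o\in C'$.

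Next I would record the length-change formula. Counting the cancellations introduced by $\phi(D;a)$, exactly as in the sketch proof of the preceding lemma of Whitehead, one gets that for every partition $\calx=D\sqcup D'$ with $a\in D\subseteq X\cup X^{-1}$ and $\bar a\in D'$,
\[
|\phi(D;a)(u)|-|u|\;=\;e(D,D')-\deg_{W(u)}(a),
\]
where $e(D,D')$ is the number of edges of $W(u)$ with one endpoint in $D$ and the other in $D'$. (The first bullet of that lemma is the case $D=C\cup\{a\}$ with $C$ a union of components of $W(u)\ssm\{a\}$ not meeting $\bar a$, in which $e(D,D')=\deg_{W(u)}(a)-e(a,C)$.) In particular the partition produced above satisfies $e(C,C')<\deg_{W(u)}(a)$; so $\deg_{W(u)}(a)\ge 1$, and then $\deg_{W(u)}(\bar a)\ge1$ by Lemma~\ref{l:basic}, so the vertex $\bar a$ is not isolated.

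It remains to produce a reducing letter, which I would do via one more reduction and a cut-vertex statement. If $W(u)$ has two or more nontrivial components, Lemma~\ref{l:basic} already supplies a reducing letter; and if some letter occurring in $u$ has its inverse in a different component of $W(u)$, that letter is a reducing letter. So assume $W(u)$ has a single nontrivial component $K$, which necessarily contains $o$, $a$, $\bar a$ and every letter occurring in $u$ together with its inverse. Among all partitions $\calx=D\sqcup D'$ with $a\in D\subseteq X\cup X^{-1}$, $\bar a\in D'$ and $e(D,D')<\deg_{W(u)}(a)$, choose one with $e(D,D')$ minimal and then with $|D|$ minimal, and relabel it $(C,C')$. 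The key is an uncrossing step: using submodularity of the cut function $D\mapsto e(D,D')$ together with the identity $\deg_{W(u)}(x)=\deg_{W(u)}(\bar x)$ of Lemma~\ref{l:basic}, one shows that the partition can be taken so that no edge of $W(u)$ joins $C\ssm\{a\}$ to $C'$, i.e.\ so that $C\ssm\{a\}$ is a union of connected components of $W(u)\ssm\{a\}$. This set is nonempty (otherwise $e(C,C')=\deg_{W(u)}(a)$), is disjoint from $\bar a$, and receives at least one edge from $a$ (again because $e(C,C')<\deg_{W(u)}(a)$); hence $a$ is a cut vertex of $K$, so $a$ is a reducing letter of $u$.

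I expect the uncrossing step to be the main obstacle: it is the combinatorial heart of Whitehead's cut-vertex argument, and mere minimality of $(C,C')$ is not by itself enough to force $C\ssm\{a\}$ to be a union of components---one must genuinely exploit the lattice structure of minimum cuts, or the involution $x\leftrightarrow\bar x$, as in \cite{W1} (and its later combinatorial reworkings due to Higgins--Lyndon and to Stallings). Since the statement is classical, I would in the end cite \cite{W1} for this step rather than reproduce the full argument; note also that the converse implication---a reducing letter yields a shortening automorphism---is exactly the second bullet of the preceding lemma.
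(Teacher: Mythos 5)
The paper offers no proof of Theorem~\ref{t:Whitehead} at all: it is quoted from \cite{W1} as a classical fact, so the only question is whether your derivation is sound, and it is not. The fatal step is exactly the one you flag. After peak reduction, the only information about $u$ that you retain is that some Whitehead automorphism $\phi(C;a)$ shortens it, i.e.\ $e(C,C')<\deg_{W(u)}(a)$; primitivity is never used again. But the implication you then try to establish by cut-minimisation and uncrossing --- that a shortening Whitehead automorphism forces a reducing letter --- is false for general words, as the paper itself warns immediately after the theorem (``the reduction is not always visible from the connectivity properties of the Whitehead graph''). For a concrete obstruction, take $u=abab a\bar{b}$ in $F_2=F(a,b)$. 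Its Whitehead graph is a cycle through the five vertices $a,b,\bar{a},\bar{b},o$ (with some doubled edges), so it is connected with no cut vertex and has no reducing letter; yet the Whitehead automorphism with acting letter $a$ and $C=\{a,\bar{b}\}$, which fixes $a$ and sends $b\mapsto\bar{a}b$, shortens $u$ from length $6$ to $5$, and indeed $e(C,C')=2<3=\deg_{W(u)}(a)$. Moreover $W(u)\ssm\{a\}$ is connected and contains $\bar{a}$, so there is no way to replace the minimising cut $C\ssm\{a\}=\{\bar{b}\}$ by a nonempty union of components of $W(u)\ssm\{a\}$ avoiding $\bar{a}$: minimal cuts are simply not component cuts in general, whatever use one makes of submodularity or of the involution $x\leftrightarrow\bar{x}$. (Of course this $u$ is not primitive, consistently with the theorem, but nothing in your argument after the appeal to peak reduction distinguishes it from a primitive word.) So the uncrossing step is not a technical min-cut lemma that can be outsourced: the statement it would prove is false unless the primitivity (more generally, separability) of $u$ is fed back into the combinatorics, and peak reduction alone has already discarded it.

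This is also why the genuine proofs look different. Whitehead's argument, and Stallings' version of it sketched in the paper for Theorem~\ref{t:Stallings}, realizes $u$ as a loop in $\#_N\, S^1\times S^2$ and runs an innermost-disc argument against a sphere dual to a proper free factor containing $u$; it is precisely there that primitivity enters, and the combinatorial proof of Heusener and Weidmann likewise uses separability directly rather than mere non-minimality in the $\aut(F_N)$-orbit (which is what \cite{W2}-style peak reduction gives and which is strictly weaker for this purpose). Since your write-up ends by citing \cite{W1} for the contested step anyway, the honest version of your proposal collapses to quoting the theorem, which is exactly what the paper does. A smaller point: your general length-change formula for arbitrary partitions also needs care in the based setting because of the vertex $o$ and the two conventions for $\phi(C;a)$ depending on which side of the partition $o$ lies, but that is secondary to the main gap.
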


For more complicated elements, Whitehead showed that non-minimal elements (in their $\aut(F_N)$-orbit) always have reducing automorphisms \cite{W2}. However, the reduction is not always visible from the connectivity properties of the Whitehead graph (one also has to consider weightings, or the number of edges between points in the graph). Even though the cut-vertex criterion does not give a way of proving minimality of arbitrary elements, it was observed by Stallings (using the same 3--manifold perspective as Whitehead half a century later) that we can at least use these methods to find the free factor support of an element.

\begin{theorem}[Stallings {\cite[Theorem~2.4]{Sta}}] \label{t:Stallings}
If $u \in F_N$ is cyclically reduced and $W(u)$ is connected with no cut vertex, then $u$ is not contained in a proper free factor of $F_N$.
\end{theorem}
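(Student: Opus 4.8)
Membership in a proper free factor is an invariant of the $\aut(F_N)$--orbit, so it is equivalent to show that no element of the orbit of $u$ can, after cyclic reduction, be written using fewer than $N$ of the letters of $\mathcal{X}$. I would argue by contradiction. Suppose $W(u)$ is connected with no cut vertex but $u$ lies in a proper free factor $A$: choose a basis $Z=\{z_1,\dots,z_N\}$ of $F_N$ with $A=\langle z_1,\dots,z_k\rangle$ for some $k<N$ and an automorphism $\theta$ carrying $X$ to $Z$; then $v_\ast:=\theta^{-1}(u)$ is a word in $x_1^{\pm1},\dots,x_k^{\pm1}$ only, so the vertices $x_{k+1}^{\pm1},\dots,x_N^{\pm1}$ are isolated in its Whitehead graph and $W(v_\ast)$ is in particular \emph{disconnected}. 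Thus $u$ and $v_\ast$ lie in one orbit while $W(u)$ is connected and $W(v_\ast)$ is not, and I want to see that this cannot happen.

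The engine is Whitehead's peak-reduction theorem \cite{W2}: two cyclically reduced elements of a common $\aut(F_N)$--orbit are joined by a chain of Whitehead automorphisms, which can be chosen without peaks, so the word lengths decrease weakly to a minimum at some cyclically reduced $w$ on the chain and then increase. Being in the orbit of $u$, $w$ too lies in a proper free factor. The first point to establish is the \emph{minimal case}: a cyclically reduced $w$ of minimal length in its orbit that lies in a proper free factor has $W(w)$ disconnected. Here one passes to a basis adapted to the free-factor support of $w$ and uses Whitehead's length formula (the lemma preceding Theorem~\ref{t:Whitehead}) together with minimality to force that adapted description, so that the Whitehead graph splits along the absent letters. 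The second, and crucial, point is to carry information back up the descending part $u=v_0,v_1,\dots,v_j=w$ of the chain: one wants that if $v$ is cyclically reduced with $W(v)$ connected and without a cut vertex and $\phi$ is a Whitehead automorphism with $|\phi(v)|\le|v|$, then $W(\phi(v))$ is again connected. Granting this, $W(w)$ is connected, contradicting the minimal case.

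\textbf{The main obstacle is this last step.} When $v$ is not of minimal length, a shortening Whitehead automorphism need not come from a reducing letter --- as the excerpt notes after \cite{W2}, such a shortening can be invisible to the \emph{connectivity} of $W(v)$ and detectable only through the edge multiplicities --- so Whitehead's cut-vertex analysis (Theorem~\ref{t:Whitehead} and the lemmas around it) does not apply directly, and one needs a careful study of the weighted Whitehead graph, tracking how components merge or split under $\phi$ and under the subsequent cyclic reduction. Should that prove unwieldy, I would fall back on Stallings's original topological argument \cite{Sta}: realise the cyclic word $u$ as a system of curves on the boundary of a handlebody of rank $N$ so that connectivity (respectively, the absence of a cut vertex) of $W(u)$ corresponds to incompressibility (respectively, $\partial$-incompressibility) of the complementary surface, and then read off the non-separability of $u$ from $\pi_1$-injectivity. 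The combinatorial scheme above is essentially a translation of that picture, and I expect the handlebody version to handle the weight-sensitive cases most cleanly.
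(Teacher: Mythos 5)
Your primary (combinatorial) route has a genuine gap, and it sits exactly where you flag it. The plan needs to carry connectivity down the descending part of a peak-reduced chain, but the bridging lemma as you state it has hypothesis ``connected with no cut vertex'' and conclusion only ``connected'': after the first Whitehead move you can no longer invoke it, so the induction does not close. To repair this you would need that \emph{every} non-length-increasing Whitehead move preserves connectivity, with no cut-vertex hypothesis at all, and nothing in Whitehead's length formula gives that: a shortening move based at a cut vertex can merge and split components in ways that are invisible to the unweighted graph, which is precisely the phenomenon the paper points out when discussing reductions of non-minimal elements \cite{W2}. This is not an incidental technicality --- it is the reason the cut-vertex statement never followed from peak reduction by a routine induction, and the known combinatorial proofs of it require a substantially more delicate analysis of the weighted Whitehead graph than ``track how components merge or split''. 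Your ``minimal case'' also needs more than is written: a separable cyclic word of minimal length in its orbit need not omit a letter of the \emph{fixed} basis, so you must argue that some minimal representative does (e.g.\ by minimising inside the factor and comparing via a peak-reduced chain), though that part is more routine than the bridging step.

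Your fallback is essentially the paper's own proof: the sketch given there realises $u$ as a loop in $\#_N S^1\times S^2$, takes the sphere system dual to the chosen basis together with a sphere system adapted to the putative proper free factor, puts them in Hatcher normal form, and extracts either a disconnection of $W_c(u)$ (from a sphere disjoint from the basis spheres) or a cut vertex (from an innermost disc of intersection) --- the same dictionary you describe in handlebody language. Since the theorem is quoted from \cite{Sta}, falling back on that topological argument is legitimate, but it is a citation rather than a new proof; as a self-contained argument, your combinatorial main route is incomplete at its crucial step.
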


\begin{proof}[Sketch proof] As $u$ is cyclically reduced, it is enough to consider the \emph{cyclic Whitehead graph} $W_c(u)$, which is homeomorphic to $W(u)$, but the basepoint is no longer considered to be a vertex. The 3--manifold $M=\#_N S^1 \times S^2$ has fundamental group $F_N$ and we can take a sphere system $S$ dual to the rose of the basis used to define the Whitehead graph. The element $u$ is given by a based loop $l$ in $M$, and we can homotope away segments of the loop $l$ in $M\ssm S$ that have both endpoints  on the same side of one of the spheres (this is equivalent to $l$ tracing out a reduced word). 
After this, the edges in the cyclic Whitehead graph correspond to arcs of $l$ in $M\ssm S$ (as the vertices of the graph correspond to the $2N$ sides of the $N$ spheres). Suppose for a contradiction that $u$ is contained in a proper free factor. Then there is another sphere system $\Sigma$ dual to a different rose (i.e.~basis) such that the reduced form of $u$ does not cross a loop in this rose (or equivalently, a sphere in $\Sigma$). We put $S$ and $\Sigma$ into Hatcher normal form (see \cite{Hat}) and homotope the loop $l$ corresponding to the conjugacy class of $u$ to have minimal intersections with both $S$ and $\Sigma$ (this part is somewhat technical, and the main reason why this proof is a sketch). In this case, there exists $\sigma \in \Sigma$ not crossed by $u$. If $\sigma$ is disjoint from $S$, then the sphere gives a partition of the sides of the spheres in $M\ssm S$, which shows that $W_c(u)$ is disconnected (see Figure~\ref{f:Stallings}). Otherwise, $\sigma \cap S$ is a collection of circles on $\sigma$: pick an innermost such circle bounding a disc $D$ and intersecting a sphere $s \in S$. We claim that the side of $s$ bounding $D$ is a cut vertex for $u$ with respect to the Whitehead graph 
given by $S$. Indeed, the loop $l$ does not cross $D$, so  $D$ partitions the remaining sides of spheres of $S$ into two pieces, which have no connecting edges (although they can be joined by a path through $s$).
\end{proof}

\begin{figure}[h]
\centering
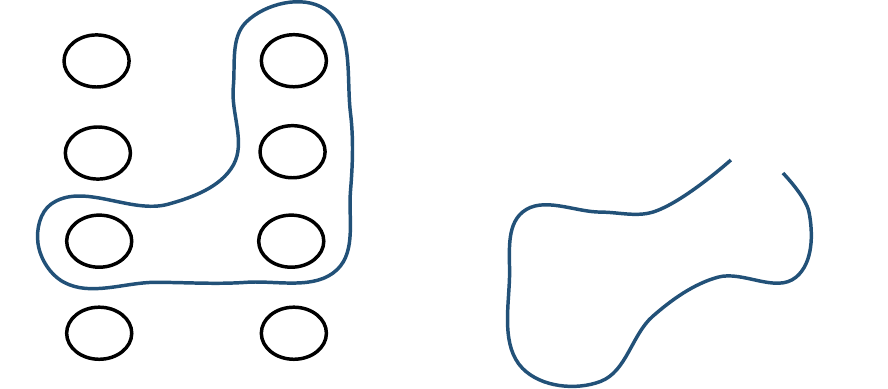
 \caption{A pictorial proof of Stallings' theorem in $F_4$: if $u$ is contained in a proper free factor then 
 the corresponding loop is disjoint from a sphere $\sigma$. If $\sigma$ is disjoint from the spheres $S$ dual to our standard basis then $W(u)$ is disconnected (left), otherwise an innermost circle of intersection on $\sigma$ gives a cut vertex ($\bar{x}_2$ in this example) via its bounding disc $D$ (right). The loops corresponding to $u$ are omitted.}
 \label{f:Stallings}
\end{figure}

\subsection{The Graded Antipode Lemma}

\begin{definition}
Two free factors $A$ and $B$ in $F_N$ are \emph{antipodal} if the subgroup $\langle A,B \rangle < F_N$ they generate is a free factor isomorphic to $A \ast B$. 
\end{definition}

The following is the key technical result for extending automorphisms of $\F_{(2)}$ to automorphisms of $\FA$. 

\begin{theorem}[Graded Antipode Lemma]
Let $A=\langle x_1, \ldots, x_k \rangle$ be the free factor generated by the first $k$ elements of a basis $\{x_1, \ldots, x_N\}$
for $F_N$, with $k \geq 2$. Suppose that $u \in F_N$ is primitive. Then exactly one of the following conditions holds:
\begin{itemize}
\item $u \in A$.
\item There exists an automorphism $\phi$ preserving $A$ such that $\phi(u)=x_{k+1}$.
\item There exists a primitive element $p \in A$ such that $\langle p, u\rangle$ is not contained in a free factor of rank 2.
\end{itemize}
\end{theorem}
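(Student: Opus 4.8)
I would first dispatch mutual exclusivity, which is soft. If $u\in A$ and $\phi$ preserves $A$, then $\phi(u)\in\phi(A)=A$, so $\phi(u)\neq x_{k+1}$; and if $p\in A$ is primitive then $p$ and $u$ both lie in $A$, so the free factor support of $\langle p,u\rangle$ is contained in $A$ and one checks it has rank $\le 2$ (immediate when $k=2$), so the third alternative fails too. If instead $\phi$ preserves $A$ with $\phi(u)=x_{k+1}$, then $\phi(p)$ is primitive in $A$ for each primitive $p\in A$; completing $\phi(p)$ to a basis of $A$ and adjoining $x_{k+1},\dots,x_N$ realises $\langle\phi(p),x_{k+1}\rangle$ as a free factor of rank $2$, hence $\langle p,u\rangle=\phi^{-1}(\langle\phi(p),x_{k+1}\rangle)$ is a rank-$2$ free factor and the third alternative fails. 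So at most one of the three alternatives holds.

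\textbf{Reformulating the middle alternative.} The key reduction is that, for $u\notin A$, the second alternative holds \emph{if and only if} $\langle A,u\rangle$ is a free factor of $F_N$ of rank $k+1$. If $\phi$ preserves $A$ with $\phi(u)=x_{k+1}$ then $\phi(\langle A,u\rangle)=A\ast\langle x_{k+1}\rangle$ is such a free factor. Conversely, if $\langle A,u\rangle=A\ast\langle u\rangle$ is a rank-$(k+1)$ free factor, write $F_N=A\ast(\langle u\rangle\ast C)$ for a free factor $C$; since also $F_N=A\ast(\langle x_{k+1}\rangle\ast\langle x_{k+2},\dots,x_N\rangle)$, and the two complements of $A$ are free of the same rank $N-k$ with $u$ and $x_{k+1}$ primitive in them, an isomorphism of complements sending $u$ to $x_{k+1}$, extended by the identity on $A$, supplies the required $\phi$. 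Thus the whole theorem comes down to one assertion: \emph{if $u$ is primitive, $u\notin A$, and $\langle A,u\rangle$ is not a free factor of rank $k+1$, then some primitive $p\in A$ has $\langle p,u\rangle$ contained in no rank-$2$ free factor} (equivalently, with free factor support of rank at least $3$).

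\textbf{Producing a bad primitive.} I would prove this by a relative peak-reduction argument, run inside the stabiliser $\Stab(A)<\aut(F_N)$ and powered by the Whitehead-graph results of this section. Fix a basis with $A=\langle x_1,\dots,x_k\rangle$ and, over all bases of this form, minimise the length of the cyclic word represented by $u$, and then the number of edges of its Whitehead graph $W$. By Lemma~\ref{l:basic} and Theorem~\ref{t:Whitehead}, any length-reducing Whitehead automorphism whose acting letter is one of $x_1^{\pm1},\dots,x_k^{\pm1}$ can be arranged to lie in $\Stab(A)$, contradicting minimality; so $u$ can be shortened only by using letters from outside $A$, which pins down the shape of the $A$-part of $W$. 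If no letter of $A$ occurs in $u$ then $u\in A$, which is excluded. Otherwise a case check shows $W$ has one of two forms: one in which $\langle A,u\rangle$ is visibly the free factor $A\ast\langle u\rangle$ --- the case we have assumed away --- and one from which we can read off a short word $p$ in the $x_i$ (a single generator, or a product $x_ix_j$) that is primitive in $A$ and for which, after reduction, $p^{-1}u$ is a commutator-type word supported on a rank-$2$ free factor meeting $\langle p\rangle$ trivially. The prototype is $\langle x_1x_2,[x_2^{-1},x_3]\rangle$: its cyclic Whitehead graph is connected with no cut vertex, so by Stallings' Theorem~\ref{t:Stallings} it lies in no proper free factor, in particular in no rank-$2$ one, and the third alternative holds for this $p$.

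\textbf{Where the difficulty lies.} The real work is the case analysis of the minimal Whitehead graph $W$: since $\Stab(A)$ is not visibly generated by Whitehead automorphisms, one must argue that every length-reducing move used in peak reduction either lies in $\Stab(A)$ or else already exposes a suitable $p$, and that the minimal $W$ are precisely the two families described. Parity of valences, the cut-vertex criterion and Stallings' connectivity theorem carry this part; the rest is bookkeeping.
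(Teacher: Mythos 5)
Your soft reductions are fine: the equivalence (for $u\notin A$) between the second alternative and $\langle A,u\rangle$ being a rank-$(k+1)$ free factor, and the exclusivity of the second and third alternatives, are correct and are all that the paper needs downstream. But the heart of the theorem --- given $u$ primitive with $u\notin A$ and $\langle A,u\rangle$ not a free factor, \emph{produce} a primitive $p\in A$ with $\langle p,u\rangle$ supported in rank $\ge 3$ --- is not proved in your sketch; it is deferred to a ``case check'' whose claimed outcome (minimal Whitehead graphs fall into exactly two families, from the second of which one reads off $p$ equal to a single generator or $x_ix_j$ with $p^{-1}u$ a ``commutator-type word'') is unsubstantiated, and nothing in the proposal indicates how such a classification would be carried out. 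The paper does not classify minimal graphs at all. Instead it normalises $u$ to have minimal word length in its $\Stab(A)$-orbit (with first letter outside $A$, and $x_1^{\pm1}$ occurring, avoiding clashes with the last letter), takes the single explicit primitive $p=x_1x_2^2\cdots x_k^2x_1x_2^2\cdots x_k^2x_2$, whose Whitehead graph nearly fills the $A$-letters with controlled cut vertices, and applies Theorem~\ref{t:Stallings} to the one word $w=pu\bar{p}up\in\langle p,u\rangle$, proving $W(w)$ is connected with no cut vertex via a four-part claim about $W(u)$ that exploits the minimality (no $A$-letter is a reducing letter, every nontrivial component of $W(u)$ minus a vertex meets $o$ or $\mathcal{A}$, etc.). That device --- testing a single well-chosen element of $\langle p,u\rangle$ with Stallings' criterion rather than analysing the pair --- is exactly the missing idea.

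Two further points. Your prototype verification is itself incorrect as written: the union of the cyclic Whitehead graphs of $x_1x_2$ and $[x_2^{-1},x_3]$ \emph{does} have cut vertices (at $x_2^{\pm1}$), and Theorem~\ref{t:Stallings} as quoted applies to a single cyclically reduced word, so ``connected with no cut vertex, hence by Stallings'' does not go through for that pair (the conclusion is true, but needs a different argument, e.g.\ killing the normal closure of the primitive $x_1x_2$). Smaller slips: ``if no letter of $A$ occurs in $u$ then $u\in A$'' is backwards --- it puts $u$ in $\langle x_{k+1},\ldots,x_N\rangle$ and lands you in the second alternative, not the first; and your parenthetical ``one checks it has rank $\le 2$'' in the exclusivity of the first and third bullets cannot be checked for $k\ge 3$ (take $u=x_1$ and $p=[x_2,x_3]x_1$: both lie in $A$, $p$ is primitive, yet $\langle p,u\rangle=\langle x_1,[x_2,x_3]\rangle$ lies in no rank-2 free factor), which is why the paper's own proof only establishes the disjunction under the assumption that the first two alternatives fail, and only the exclusivity of the second and third bullets is ever used.
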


\begin{proof}
Suppose $u \not \in A$ and there does not exist an automorphism preserving $A$ such that $\phi(u)=x_{k+1}$. We let $B= \langle x_{k+1}, \ldots, x_N \rangle$ and use $\mathcal{A}$ and $\mathcal{B}$ to denote the set of elements and  inverses for the given bases of $A$ and $B$. We are free to replace $u$ with its image under any automorphism preserving $A$, so we can assume $|u|$ is minimal with respect to such automorphisms. Minimality of $|u|$ in this respect is the key device for getting the arguments in this lemma to work. By a simple induction on rank, we may also assume that $u$ contains every letter of $\{x_{k+1}, \ldots, x_N\}$ or its inverse. Furthermore, after applying a conjugation by an element of $A$, we may assume that the initial letter $s$ of $u$ belongs to $\mathcal{B}$. Let $t$ be the terminal letter of $u$; we allow the possibility that $t \in \mathcal{A}$. 

As $u$ is primitive, the assumption that no automorphism fixing $A$ sends $u$ to $x_{k+1}$ implies that $u$ also contains letters from $\mathcal{A}$. After applying a signed permutation of the basis of $A$, we can furthermore assume:

\begin{itemize}
\item $x_1$ has incident edges in $W(u)$ and is not equal to $\bar{t}$
\item $x_2$ is not equal to $t$.
\end{itemize}

For the first bullet point, we reorder the basis so that $x_1$ or $\bar{x}_1$ is in the support of $u$, and if $x_1 = \bar{t}$ we swap $x_1$ with $\bar{x}_1$. Similarly, we apply the inversion swapping $x_2$ with $\bar{x}_2$ if $x_2=t$. We then let 

\[ p=x_1x_2^2 \cdots x_k^2x_1x_2^2\cdots x_k^2 x_2. \]

The Whitehead graph of $p$ is shown in Figure~\ref{f:W2}.
\begin{figure}[h]
\centering
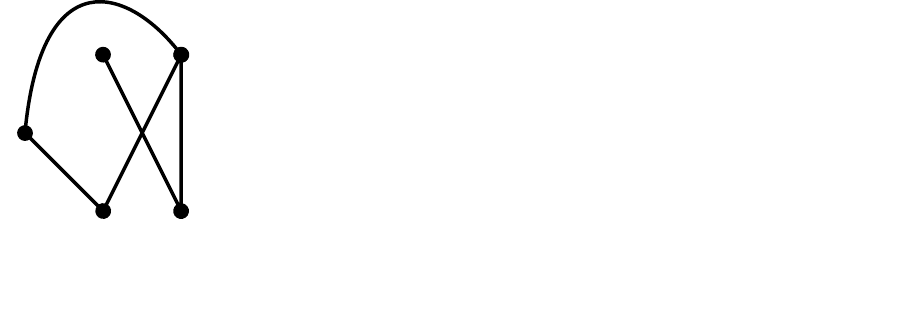
\caption{On the left is the Whitehead graph of $p$ when $k=2$ and $p=x_1x_2^2x_1x_2^3$, and the general case $k \geq 3$ is given on the right. Duplicate edges are omitted. When $k=2$ both $x_2$ and $\bar{x}_2$ are cut vertices, and when $k \geq 3$ only $\bar{x}_2$ is a cut vertex.}\label{f:W2} 
\end{figure}
The element $p$ is the image of $x_2$ under the automorphism \[ \rho_{12}^2 \rho_{13}^2 \cdots \rho_{1k}^2 \lambda_{21}^2, \] where $\lambda_{21}$ is the left Nielsen automorphism   $x_2 \mapsto x_1x_2$ and $\rho_{1j}$ is the right Nielsen automorphism   $x_1 \mapsto x_1x_j$. Hence $p$ is primitive. We let 
\begin{align*} w&=pu\bar{p}up \\ &= x_1 \cdots x_2 s \cdots t \bar{x}_2 \cdots \bar{x}_1 s \cdots t x_1 \cdots x_2.  \end{align*}
Note that $w$ is cyclically reduced in this form, so $W(w)$ is obtained from $W(u)$ and $W(p)$ as follows: take all edges of $W(u)$ and $W(p)$ (with appropriate multiplicity) and throw out the edges from $\bar{s}$ and $t$ to $o$ in $W(u)$, then attach edges from $\bar{s}$ and $t$ to both $\bar{x}_1$ and $x_2$. We will show that $W(w)$ is connected and has no cut vertex, hence by Stallings' theorem, $w$ is not contained in any free factor of $F_N$. To see this, we claim that for any $x \in \mathcal{A} \cup \mathcal{B}$ and any $y \neq x$, there is a path from $y$ to $o$ in $W(w) \ssm x$. 

First suppose that $y \in \mathcal{A}$. As $W(p)$ is a subgraph of $W(u)$, there is a path from $y$ to $o$ in $W(w)\ssm x$ unless $x$ is a cut vertex of $W(p)$, in which case $x=\bar{x}_2$ or $k=2$ and $x=x_2$. In either situation, $W(p)\ssm x$ has two components, one of which contains $o$ and the other contains $x_1$. By the first part of the claim below, there is a path from $x_1$ to either $\bar{x}_1$, $\bar{s}$, or $t$ in $W(u)\ssm\{o,x\}$, and as $\bar{x}_1$ is adjacent to both $\bar{s}$ and $t$ in $W(w)$, the two components of $W(p)\ssm x$ are connected by a path in $W(w)$. 

If $y \in \mathcal{B}$, then $y$ or $\bar{y}$ appears in $u$ and $y$ lies in some nontrivial component of $W(u)$. By Part 4 of the claim below, every nontrivial component of $W(u)\ssm\{o,x\}$ contains either $\bar{s}$, $t$, or a letter $a \in \mathcal{A}$. As $\bar{s}$ and $t$ are adjacent to both $\bar{x}_1$ and $x_2$, we can find a path in $W(w)\ssm x$ to an element of $\mathcal{A}$, and therefore to $o$. Thus the following claim  completes the proof of the lemma. 

\paragraph*{Claim: Key properties of $W(u)$.}

\begin{enumerate}
 \item If $x \in \mathcal{A}$ and $x \neq x_1, \bar{x}_1$, then there exists a path from $x_1$ to either $\bar{x}_1$, $\bar{s}$, or $t$ in $W(u)\ssm\{o,x\}$.
 \item Every nontrivial component of $W(u)$ contains either $o$ or an element of $\mathcal{A}$
 \item If $x \in \mathcal{A} \cup \mathcal{B}$, then every nontrivial component of $W(u)\ssm x$ contains either $o$, or a letter $a \in \mathcal{A}$.
 \item If $x \in \mathcal{A} \cup \mathcal{B}$, then every nontrivial component of $W(u)\ssm\{o,x\}$ contains either $\bar{s}$, $t$, or a letter $a \in \mathcal{A}$.
\end{enumerate}

For the first point, note that  Whitehead automorphisms with an acting letter from $\mathcal{A}$ preserve $A$, so cannot be reducing since $|u|$ is minimal. Hence no letter of $\mathcal{A}$ is a reducing letter of $u$. As $x_1$ is in a nontrivial component $C$ of $W(u)$ and is not reducing, $\bar{x}_1$ is also in $C$. Furthermore, as $x \in \mathcal{A}$ is also not reducing, $x$ is not a cut vertex of $C$. Hence there is a path from $x_1$ to $\bar{x}_1$ in $C\ssm x$. This is also a path in $W(u)\ssm\{o,x\}$, unless it crosses $o$, in which case it will either pass through $\bar{s}$ or $t$ first.

For the second point, we work towards a contradiction by supposing that $C$ is a nontrivial component of $W(u)$ that does not contain $o$ or an element of $\mathcal{A}$. Pick $x \in C$ such that $\bar{x} \not \in C$ (such a letter exists by Lemma~\ref{l:basic}). As $o \not \in C$ and no letters from $\mathcal{A}$ appear in $C$, the reducing automorphism $\phi(C,x)$ fixes $A$. This contradicts the minimality of $|u|$ with respect to such automorphisms.

The third point uses the same idea. If $x \in \mathcal{A}$ then $x$ is not reducing, so if $x$ is contained in a nontrivial component $C$ of $W(u)$, it is not a cut vertex of this component ,and $\bar{x} \in C$. Thus the result follows from Part 2. If $x \in \mathcal{B}$ and $C$ is a component of $W(u) \ssm x$ that doesn't contain $o$ or an element of $\mathcal{A}$, then $C$ is not a component of $W(u)$ by Part 2. Hence  there are edges between $x$ and $C$, and $\phi(C \cup x, x)$ reduces the length of $u$. However, as $o$ and $\mathcal{A}$ are contained in the complement of $C \cup x$, this automorphism fixes $A$, which is a contradiction.

The fourth point follows quickly from the third: the component $C$ of $W(u)\ssm x$ containing $o$ also contains $\bar{s}$ and $t$. Either $o$ is not a cut vertex of $C$ or $o$ separates $c$ into two pieces, one containing $\bar{s}$ and the other containing $t$.
\end{proof}

\section{Extending automorphisms of $\mathcal{F}_{(2)}$ to $\mathcal{AF}_N$} \label{s:extending}

The purpose of this section is to prove Theorem~\ref{t:f2_is_rigid}. We fix  $N \geq 3$, let $\mathcal{AF}_N$ denote the free factor graph of $F_N$, and let $\mathcal{F}_{(k)}$ denote the subgraph of $\mathcal{AF}_N$ spanned by factors of rank at most $k$. 
In the light of \cite{BB}, Theorem~\ref{t:f2_is_rigid} is a consequence of the following proposition.

\begin{proposition}\label{p:going_up_1} Let $N \geq 3$ and $k \geq 2$. Every automorphism of $\mathcal{F}_{(k)}$ extends to a unique automorphism of $\mathcal{AF}_N$.
\end{proposition}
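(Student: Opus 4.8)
The plan is to deduce Proposition~\ref{p:going_up_1} from a single extension step, applied repeatedly: for each $k$ with $2\le k\le N-2$, every automorphism of $\mathcal{F}_{(k)}$ extends uniquely to an automorphism of $\mathcal{F}_{(k+1)}$. Since proper free factors of $F_N$ have rank at most $N-1$, one has $\mathcal{F}_{(N-1)}=\mathcal{AF}_N$, so iterating the step carries an automorphism of $\mathcal{F}_{(k)}$ all the way up to $\mathcal{AF}_N$. Uniqueness propagates along the way because a rank-$(k+1)$ free factor is determined by the set of rank-$k$ free factors it contains, so an automorphism of $\mathcal{F}_{(k+1)}$ is already pinned down by its restriction to $\mathcal{F}_{(k)}$.

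For the extension step, fix an automorphism $\Phi$ of $\mathcal{F}_{(k)}$. One first checks that $\Phi$ preserves the rank of vertices (for $\mathcal{F}_{(2)}$, where there are only two layers, this is the assertion that they are not interchanged). The key input is the Graded Antipode Lemma: its trichotomy translates into a description, internal to $\mathcal{F}_{(k)}$, of when a rank-$k$ vertex $A$ and a rank-$1$ vertex $\langle u\rangle$ are \emph{antipodal}, i.e.\ when $\langle A,u\rangle$ is a free factor of rank $k+1$ --- namely, $\langle u\rangle$ is not below $A$, and for every rank-$1$ vertex $\langle p\rangle$ below $A$ the vertices $\langle p\rangle$ and $\langle u\rangle$ admit a common neighbour of rank at most $2$. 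As $\Phi$ is a rank-preserving graph automorphism, it preserves this antipodal relation. I would then introduce \emph{standard $(k+1)$-apartments}: for a basis $x_1,\dots,x_{k+1}$ of a rank-$(k+1)$ free factor $C$, the subgraph of $\mathcal{F}_{(k)}$ spanned by the factors $\langle x_i:i\in S\rangle$ with $\emptyset\neq S\subsetneq\{1,\dots,k+1\}$. The goal is to characterise these apartments among subgraphs of $\mathcal{F}_{(k)}$ by their (fixed, finite) isomorphism type together with the condition that each rank-$k$ vertex of the subgraph be antipodal to the unique rank-$1$ vertex of the subgraph not lying below it; granting this, $\Phi$ permutes the standard $(k+1)$-apartments.

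The heart of the matter is a metric characterisation of when two standard $(k+1)$-apartments $\Delta$ and $\Lambda$ span the same rank-$(k+1)$ free factor $\langle\Delta\rangle=\langle\Lambda\rangle$: if and only if they are joined by a chain $\Delta=\Delta_0,\Delta_1,\dots,\Delta_n=\Lambda$ of standard $(k+1)$-apartments in which consecutive members share a \emph{complementary antipodal pair} --- a pair of vertices $P,Q$, lying in both, that are antipodal with $\langle P,Q\rangle$ of rank $k+1$. The direction ``$\Leftarrow$'' is immediate, since sharing such a pair forces $\langle\Delta_i\rangle=\langle P,Q\rangle=\langle\Delta_{i+1}\rangle$ (a rank-$(k+1)$ free factor contained in a rank-$(k+1)$ free factor must coincide with it). Granting the characterisation, I would define the extension by $\hat\Phi(C):=\langle\Phi(\Delta)\rangle$ for any standard $(k+1)$-apartment $\Delta$ with $\langle\Delta\rangle=C$; this is well defined because any two such $\Delta$ are chain-connected, $\Phi$ sends chains to chains (it permutes apartments and preserves complementary antipodal pairs), and the chain condition forces the images to span the same factor. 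Applying the same recipe to $\Phi^{-1}$ yields an inverse, so $\hat\Phi$ is a bijection on rank-$(k+1)$ vertices, and it is a graph automorphism of $\mathcal{F}_{(k+1)}$ because the only new incidences --- between a vertex $A$ of rank $\le k$ and a vertex $C$ of rank $k+1$ --- are visible inside $\mathcal{F}_{(k)}$: when $\rk A=k$, $A$ lies below $C$ precisely when it occurs as a rank-$k$ vertex of some standard apartment spanning $C$, and the case $\rk A<k$ reduces to this by interposing a rank-$k$ factor.

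The main obstacle is the implication ``$\Rightarrow$'' of the chain characterisation: two bases of one rank-$(k+1)$ free factor $C$ must be linked by moves each of which fixes a complementary antipodal pair. This I would establish by a connectivity argument --- induction on $k$ together with connectedness of the free factor graph of $C$ --- noting that fixing a complementary pair $(P,Q)$ amounts to fixing a splitting $C=P\ast Q$ and varying only a basis of the rank-$k$ factor $Q$, while passage between different splittings is controlled by the free factor graph of $C$. A secondary, also combinatorial, difficulty is the exact identification of standard $(k+1)$-apartments among subgraphs of the correct isomorphism type; here I expect to rely on the Graded Antipode Lemma and on Whitehead-graph manipulations in the spirit of Section~\ref{s:aal} and of \cite{BB}.
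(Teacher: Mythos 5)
Your overall architecture is exactly the paper's: extend one layer at a time (using $\mathcal{F}_{(N-1)}=\mathcal{AF}_N$), check rank preservation, use the Graded Antipode Lemma to recognise antipodal pairs inside $\mathcal{F}_{(k)}$, characterise standard apartments, and prove a chain (``crawling'') criterion for when two standard apartments span the same rank-$(k+1)$ factor; the ``$\Leftarrow$'' direction, the definition of the extension, its well-definedness and the uniqueness argument are all fine and agree with the paper. The problem is that the three statements carrying essentially all of the content are deferred, and at the crucial one the route you sketch is not the one that is known to work. Rank preservation is not something one ``first checks'': in the paper it is a separate proposition whose proof shows that links of intermediate-rank vertices are joins (hence have diameter $2$), distinguishes rank $1$ from rank $k$ by an antipode/distance argument imported from Bestvina--Bridson, and then handles intermediate ranks by reducing, via the link of a rank-$k$ vertex, to rank-preservation for automorphisms of $\mathcal{AF}_k$; none of this appears in your proposal, and it is needed even to make sense of ``restriction to $\mathcal{F}_{(k)}$'' in your uniqueness argument.

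For the forward implication of the chain characterisation --- which you rightly call the main obstacle --- your plan (induction on $k$ plus connectedness of the free factor graph of $C$) is left entirely schematic, and it is unclear how a path of nested factors of $C$ converts into a chain of apartments sharing complementary antipodal pairs. Note, for instance, that the apartments of the bases $B\cup\{u\}$ and $B\cup\{u'\}$ of $C$ (same basis $B$ of a corank-one factor, different last element) share no complementary antipodal pair at all: their common vertices are precisely the factors generated by subsets of $B$, so any two of them span a factor of rank at most $k$, and the naive ``vary the last element'' move is not an edge of your chain relation. The paper's argument is different and more concrete: the stabiliser $\aut(F_N,C)$ acts transitively on bases of $C$ and is generated by inversions and the Nielsen maps $\rho_{ij}$ preserving $C$; by induction on word length one reduces to a single such generator, and for $\rho_{ij}$ with both indices in the basis of $C$ one exhibits the preserved antipodal pair $\bigl(\langle x_l\rangle,\ \langle \text{the other basis elements}\rangle\bigr)$ with $l\neq i,j$. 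Finally, your proposed characterisation of standard apartments imposes antipodality only between each top-rank vertex and its opposite rank-$1$ vertex; this is weaker than the hypothesis the paper actually verifies and uses (every vertex of rank at least $2$ antipodal to every rank-$1$ vertex of the apartment not below it), and the paper's facet-by-facet induction needs the hypothesis at all ranks, not just the top. As stated, the sufficiency of your weaker condition is unproven, so either you must prove it or you should verify and transport the stronger condition, as the paper does. With these three ingredients supplied as in the paper, your outline does yield the proposition.
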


Rather than proving Proposition~\ref{p:going_up_1} directly, we instead proceed one layer at a time. 
\begin{proposition}\label{p:going_up_2}
Let $N \geq 3$ and $k \geq 3$. Every automorphism of $\mathcal{F}_{(k-1)}$ extends to a unique automorphism of $\mathcal{F}_{(k)}$.
\end{proposition}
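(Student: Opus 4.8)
The plan is to follow the strategy sketched in the introduction: upgrade an automorphism $\Phi$ of $\mathcal{F}_{(k-1)}$ to one of $\mathcal{F}_{(k)}$ by constructing, from the metric data of $\mathcal{F}_{(k-1)}$ alone, a canonical way to recognise the rank-$k$ free factors of $F_N$ together with the incidence relation between a rank-$k$ factor $W$ and the rank-$(\le k-1)$ factors it contains. Since a rank-$k$ vertex is adjacent in $\mathcal{AF}_N$ precisely to the proper free factors it contains and to the proper free factors containing it, and since the latter all have rank $\ge k+1$ and so do not appear in $\mathcal{F}_{(k)}$, it suffices to pin down the set of rank-$(k-1)$ (equivalently, maximal-in-$\mathcal{F}_{(k-1)}$) subfactors of each rank-$k$ factor in an $\aut(\mathcal{F}_{(k-1)})$-invariant way; then one defines $\Phi$ on the new vertex $W$ by sending it to the unique rank-$k$ factor whose set of maximal subfactors is $\Phi(\{A : A < W,\ \rk A = k-1\})$, and checks this is a well-defined graph automorphism.

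First I would use the Graded Antipode Lemma to detect \emph{antipodal pairs generating a rank-$k$ factor} using only distances in $\mathcal{F}_{(k-1)}$: given two rank-$\le k-1$ factors $A,B$ with $\rk A + \rk B \le k$, the trichotomy of the lemma (applied after choosing a suitable basis) distinguishes, purely by whether there exists a vertex $p$ of $\mathcal{F}_{(k-1)}$ with $\langle p,\cdot\rangle$ not lying in a small free factor, the case where $\langle A,B\rangle \cong A * B$ has rank exactly $k$. Call such a pair a \emph{$k$-apartment pair}. Second, mimicking \cite{BB}, I would introduce \emph{standard $k$-apartments}: the subgraph of $\mathcal{F}_{(k-1)}$ spanned by all proper subfactors generated by subsets of a fixed basis $\{y_1,\dots,y_k\}$ of a rank-$k$ factor $W$. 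One shows $\aut(\mathcal{F}_{(k-1)})$ preserves the collection of standard $k$-apartments (each is cut out combinatorially by its pattern of $k$-apartment pairs and its internal adjacency structure), and that the set of maximal-rank vertices of a standard $k$-apartment $\Delta$ depends only on $\Delta$ as a subgraph.

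Third — and this is the part I expect to be the main obstacle — I would prove that two standard $k$-apartments $\Delta$ and $\Lambda$ have their vertex sets generating \emph{the same} rank-$k$ factor if and only if they can be joined by a chain $\Delta = \Delta_0, \Delta_1, \ldots, \Delta_n = \Lambda$ of standard $k$-apartments in which consecutive terms share a common antipodal ($k$-apartment) pair. The ``only if'' direction is a connectivity statement about the bases of a fixed rank-$k$ free factor under Nielsen-type moves, each of which alters one basis vector and hence fixes a large shared subfactor pattern; the ``if'' direction requires showing that sharing an antipodal pair forces two rank-$k$ factors to coincide, which is where one must rule out the possibility that two genuinely different rank-$k$ factors of $F_N$ meet in a rank-$(k-1)$ (or even rank-$k$) antipodal configuration — here I would argue by passing to $H_1$ or by a direct free-factor argument that $\langle A,B\rangle$ with $A*B$ of rank $k$ is \emph{contained in a unique} rank-$k$ free factor, namely $\langle A,B\rangle$ itself. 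Granting this characterisation, the equivalence classes of standard $k$-apartments under the chain relation are $\aut(\mathcal{F}_{(k-1)})$-invariant and are in canonical bijection with rank-$k$ free factors; reading off, for each class, the union of the maximal-rank vertices of its members recovers the set of rank-$(k-1)$ subfactors of the corresponding rank-$k$ factor, giving the desired extension of $\Phi$. Uniqueness of the extension is immediate, since a rank-$k$ vertex of $\mathcal{AF}_N$ is determined by its neighbourhood among rank-$(\le k-1)$ vertices (indeed by its set of rank-$(k-1)$ subfactors), so any extension must agree with the one constructed.
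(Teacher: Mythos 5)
Your overall route is the paper's own: characterise standard $k$-apartments inside $\mathcal{F}_{(k-1)}$ by their antipodal pairs via the Graded Antipode Lemma, prove a chain (``crawling'') criterion saying that two standard apartments span the same rank-$k$ factor if and only if they are connected by a chain of standard apartments whose consecutive terms share an antipodal pair, and then define the extension on a new rank-$k$ vertex by reading off the rank-$(k-1)$ subfactors it determines; uniqueness holds because a rank-$k$ factor is generated by, hence determined by, its lower link. The ``if'' direction of your chain criterion is handled exactly as in the paper (a pair $B$, $\langle u\rangle$ with $B\ast\langle u\rangle$ a rank-$k$ free factor lies in the link of that factor and of no other), and the ``only if'' direction is the same connectivity-of-bases argument via Nielsen moves preserving the factor.

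The genuine gap is that you never establish that automorphisms of $\mathcal{F}_{(k-1)}$ preserve the rank of vertices, yet every step of your construction presupposes it: the notion of a ``$k$-apartment pair'' (defined via $\rk A+\rk B\le k$), the identification of the ``maximal-rank'' vertices of an apartment, and the distance-plus-rank antipodality criterion all quantify over vertices of specified ranks, and a priori a graph automorphism could permute ranks (adjacency records containment but not its direction, so nothing obvious prevents exchanging rank-$1$ and rank-$(k-1)$ vertices). The paper proves rank-preservation as a separate proposition, following Bestvina--Bridson: for intermediate ranks the link is a join and so has diameter $2$, whereas rank-$1$ and top-rank vertices have links of larger diameter, and a further antipode/distance argument is needed to distinguish rank $1$ from the top rank. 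Without this ingredient (or a substitute), the invariance claims in your second and third steps are unsupported. A smaller inaccuracy: the Graded Antipode Lemma only detects antipodality between a factor and a \emph{primitive element}, i.e.\ a rank-$1$ vertex; it does not directly decide whether two factors of arbitrary ranks with $\rk A+\rk B\le k$ are antipodal. This costs you nothing in practice, since the apartment characterisation and the chain condition only require pairs in which one member has rank $1$ (in the chain, ranks $k-1$ and $1$), but your detection step should be stated with that restriction.
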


Note that Proposition~\ref{p:going_up_1} follows from Proposition~\ref{p:going_up_2} because $\F_{(N-1)}=\FA$. The rough idea of the proof of Proposition~\ref{p:going_up_2} is quite simple but the details require work. For this reason, we will give an outline of the proof now (referencing yet-to-be-qualified statements), and in the rest of the section we will provide the missing details. 

\begin{remark}\label{r:flag} We will sometimes  blur the distinction between the \emph{free factor complex} and its 1-skeleton, the \emph{free factor graph} (and likewise for their subgraphs/subcomplexes). The former is the  simplicial flag complex determined by the latter, so passing between them does not alter the group of automorphisms. Thinking about the higher-dimensional simplices will be beneficial in this section, whereas for the rigidity results elsewhere in the paper, it is easier to think of graphs (the 1-skeleta of the complexes). In keeping with this point of view, we adopt the convention that the link $\lk(v)$ of a vertex $v$ in a graph is the full subgraph spanned by vertices adjacent to $v$ (not just
the set of vertices).
\end{remark}

\begin{proof}[Proof of Proposition~\ref{p:going_up_2}.]
 We say an automorphism $\phi$ of $\mathcal{F}_{(k)}$ is \emph{rank-preserving} if $\phi(A)$ is the same rank as $A$ for all free factors $A$. The proof boils down to the following points:
 
 \begin{itemize}
 \item Every automorphism of $\mathcal{F}_{(k)}$ is rank-preserving.
 \item If $A$ and $A'$ are distinct rank $k$ factors, their links in $\mathcal{F}_{(k)}$ are distinct subgraphs of $\F_{(k-1)}$.
 \item The action of $\aut(\F_{(k-1)})$ permutes the set of such links.
 \end{itemize}
 
 With these three facts in hand, we see that every automorphism of $\mathcal{F}_{(k-1)}$ can be extended to
 an automorphism of $\mathcal{F}_{(k)}$ by recording the action on the set of links of rank $k$ factors: if $\phi(\lk(A))=\lk(A')$, extend $\phi$ by defining $\phi(A):=A'$. 
 
The first bullet point is covered by Proposition~\ref{p:rank}, whose proof is given below. The second bullet point is obvious, because each rank $k$ factor is generated (hence uniquely determined) by the factors that neighbour it in $\F_{(k-1)}$ (its lower link). The remaining work is concentrated on the third bullet point. The key idea is that if $A$ is a rank $k$ factor then its link in $\mathcal{F}_{(k-1)}$ is a union of \emph{standard $k$-apartments} associated to bases of $A$ (see Definition~\ref{d:standard_apartment} and Figure~\ref{f:apartment}). Standard $k$-apartments $\Delta$ are isomorphic to the barycentric subdivision of the boundary of a $k-1$-simplex, and are distinguished from other subcomplexes 
of this type by the fact that opposite vertices in $\Delta$ are antipodal (Proposition~\ref{p:standard_characterisation}). The Graded Antipode Lemma implies that this characterising property is preserved under (rank-preserving) automorphisms of $\mathcal{F}_{(k-1)}$, so the set of standard apartments is preserved under  automorphisms of $\mathcal{F}_{(k-1)}$ (Corollary~\ref{c:standard_preservation}). Finally, we argue that links themselves are preserved by using what one might call a \emph{crawling argument} (Proposition~\ref{p:crawling}):
 \begin{itemize}
 \item If two standard $k$-apartments share an antipodal pair of vertices generating a rank $k$ factor $A$ then  both
 apartments  lie in the link of $A$.
 \item Any two standard $k$-apartments in the link of a given rank $k$ factor $A$ can be connected by a chain of standard apartments such that each successive pair along the chain shares an antipodal pair of vertices generating $A$.
 \end{itemize}
 As antipodal pairs and standard apartments are preserved by automorphisms of $\F_{(k-1)}$, these facts tell us
 that the action of $\aut(\F_{(k-1)})$ permutes the set of rank-$k$ links. This completes the proof of the proposition. 
 \end{proof}

\begin{figure}[h]
\centering
\begingroup%
  \makeatletter%
  \providecommand\color[2][]{%
    \errmessage{(Inkscape) Color is used for the text in Inkscape, but the package 'color.sty' is not loaded}%
    \renewcommand\color[2][]{}%
  }%
  \providecommand\transparent[1]{%
    \errmessage{(Inkscape) Transparency is used (non-zero) for the text in Inkscape, but the package 'transparent.sty' is not loaded}%
    \renewcommand\transparent[1]{}%
  }%
  \providecommand\rotatebox[2]{#2}%
  \newcommand*\fsize{\dimexpr\f@size pt\relax}%
  \newcommand*\lineheight[1]{\fontsize{\fsize}{#1\fsize}\selectfont}%
  \ifx\svgwidth\undefined%
    \setlength{\unitlength}{265.52127802bp}%
    \ifx\svgscale\undefined%
      \relax%
    \else%
      \setlength{\unitlength}{\unitlength * \real{\svgscale}}%
    \fi%
  \else%
    \setlength{\unitlength}{\svgwidth}%
  \fi%
  \global\let\svgwidth\undefined%
  \global\let\svgscale\undefined%
  \makeatother%
  \begin{picture}(1,0.52186118)%
    \lineheight{1}%
    \setlength\tabcolsep{0pt}%
    \put(0.42022261,0.27749261){\color[rgb]{0,0,0}\makebox(0,0)[lt]{\lineheight{1.25}\smash{\begin{tabular}[t]{l}$\langle x_1, x_2 \rangle$\end{tabular}}}}%
    \put(0.27774007,0.48752427){\color[rgb]{0,0,0}\makebox(0,0)[lt]{\lineheight{1.25}\smash{\begin{tabular}[t]{l}$\langle x_1 \rangle$\end{tabular}}}}%
    \put(0.50481505,0.01152588){\color[rgb]{0,0,0}\makebox(0,0)[lt]{\lineheight{1.25}\smash{\begin{tabular}[t]{l}$\langle x_2 \rangle$\end{tabular}}}}%
    \put(-0.00357614,0.01099357){\color[rgb]{0,0,0}\makebox(0,0)[lt]{\lineheight{1.25}\smash{\begin{tabular}[t]{l}$\langle x_3 \rangle$\end{tabular}}}}%
    \put(-0.00375146,0.27900627){\color[rgb]{0,0,0}\makebox(0,0)[lt]{\lineheight{1.25}\smash{\begin{tabular}[t]{l}$\langle x_1, x_3 \rangle$\end{tabular}}}}%
    \put(0.2365749,0.01065856){\color[rgb]{0,0,0}\makebox(0,0)[lt]{\lineheight{1.25}\smash{\begin{tabular}[t]{l}$\langle x_2, x_3 \rangle$\end{tabular}}}}%
    \put(0,0){\includegraphics[width=\unitlength,page=1]{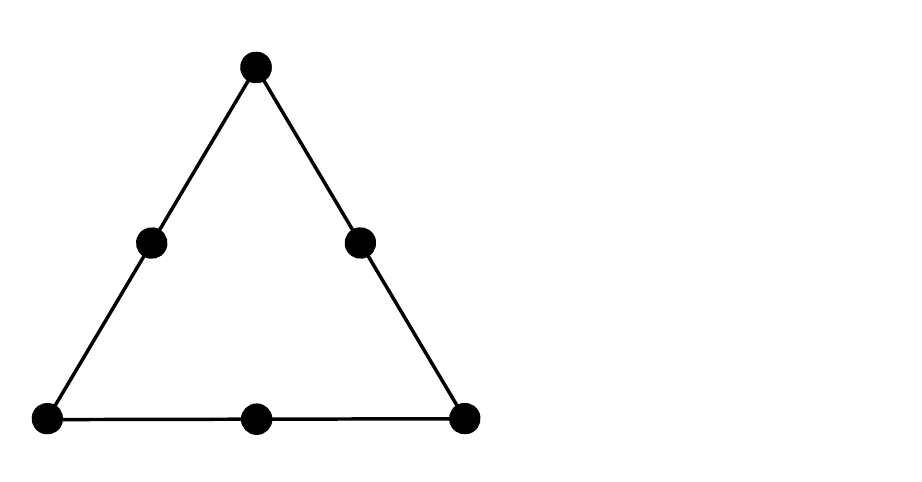}}%
  \end{picture}%
\endgroup%

\caption{The standard 3-apartment given by a basis $\{x_1, x_2, x_3\}$ of a rank 3 factor.}\label{f:apartment} 
\end{figure} 
 
\begin{proposition}\label{p:rank}
For $2\le k\le N-1$, every automorphism of the graph $\F_{(k)}\subseteq \FA$ preserves the rank of vertices.
\end{proposition}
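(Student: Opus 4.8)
The plan is to treat $\F_{(k)}$ as a flag complex and exploit that it is \emph{pure} of dimension $k-1$: any chain of proper free factors whose ranks lie in $\{1,\dots,k\}$ completes — by inserting into or onto it a factor of each missing rank — to a \emph{complete flag} $A_1<A_2<\dots<A_k$ with $\rk A_i=i$, so the maximal simplices of $\F_{(k)}$ are exactly the complete flags. An automorphism $\phi$ therefore permutes complete flags, and on each such flag $\sigma$ it acts by some permutation $\pi_\sigma$ of the rank labels $\{1,\dots,k\}$. If $\sigma,\sigma'$ are complete flags with a common codimension-one face they differ in the vertex of a single rank, so $\pi_\sigma$ and $\pi_{\sigma'}$ agree off that rank and hence everywhere. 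Moreover $\F_{(k)}$ is gallery-connected: the link of a codimension-two simplex is connected, being a join of two non-empty subcomplexes when the two omitted ranks are non-adjacent, and (when they are adjacent) an $\F_{(2)}$ of a free group of rank $\ge 3$, which is connected since $N\ge 3$. Hence $\pi_\sigma=\pi$ is a single permutation; $\phi$ relabels ranks by a fixed $\pi\in S_k$, and it remains to prove $\pi=\mathrm{id}$.

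The next step is a link analysis. For a rank-$r$ vertex $A$, its link in $\F_{(k)}$ is the join $L_A\ast U_A$, where $L_A$ is the complex of proper free factors of $A$ (isomorphic to $\mathcal{AF}_r$) and $U_A$ is the complex of rank-$\le k$ proper free factors of $F_N$ that properly contain $A$ (isomorphic to $\F_{(k-r)}(F_{N-r})$); it really is a join because every member of $L_A$ lies below every member of $U_A$. Each of these two complexes is join-irreducible — an infinite edgeless complex, a free factor complex of rank $\ge 3$, or an $\F_{(2)}$ of a free group of rank $\ge 3$ — and none of these is a non-trivial join (for the last: a rank-one vertex forces one side of a purported join to consist of rank-two factors, which are pairwise non-adjacent, a contradiction). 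Since $L_A=\varnothing$ exactly when $r=1$ and $U_A=\varnothing$ exactly when $r=k$, the link $\lk(A)$ is join-irreducible precisely for $r\in\{1,k\}$ and is a two-fold join otherwise. As the decomposition into join-irreducibles is an isomorphism invariant, $\pi$ preserves both $\{1,k\}$ and $\{2,\dots,k-1\}$; and if $\pi(r)=s$ then $\{\mathcal{AF}_r,\F_{(k-r)}(F_{N-r})\}$ and $\{\mathcal{AF}_s,\F_{(k-s)}(F_{N-s})\}$ coincide as unordered pairs of complexes (interpreting the $r\in\{1,k\}$ singletons appropriately).

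I would finish by induction on $k$, the key being to show $\pi$ fixes $1$ and $k$. If $\pi(1)=1$ then $\pi(k)=k$, so $\phi$ preserves the set of rank-$k$ vertices and restricts to an automorphism of $\F_{(k-1)}=\F_{(k)}\ssm\{\text{rank-}k\text{ vertices}\}$; by the inductive hypothesis this restriction preserves rank, and with $\pi(k)=k$ we conclude $\pi=\mathrm{id}$. The base case $k=2$ is the assertion that an automorphism of the bipartite graph $\F_{(2)}$ does not interchange its two parts, the rank-one and rank-two vertices; for $N=3$ this is contained in \cite{BB} (there $\F_{(2)}=\mathcal{AF}_3$), and for $N\ge 4$ it needs a dedicated combinatorial argument distinguishing the two parts — I expect this to run through a free-group asymmetry, detecting for instance that the complement of a rank-one factor has rank $N-1$ whereas the complement of a rank-two factor has rank $N-2$.

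The remaining, and main, obstacle is to rule out $\pi(1)=k$ when $k<N-1$ (the case $k=N-1$ being merely the statement that $\aut(\mathcal{AF}_N)$ preserves rank, part of \cite{BB}). In that case $\phi$ restricts to an isomorphism from the link of a rank-one vertex, $\F_{(k-1)}(F_{N-1})$, onto the link of a rank-$k$ vertex, $\mathcal{AF}_k=\F_{(k-1)}(F_k)$; iterating the link analysis above, any such isomorphism forces an isomorphism $\F_{(2)}(F_a)\cong\F_{(2)}(F_b)$ with $a\ne b$. So the whole proposition reduces to the statement that truncated free factor complexes of free groups of distinct ranks are non-isomorphic; for the full complexes this is immediate from \cite{BB} (their automorphism groups are $\out(F_a)$ and $\out(F_b)$), and the genuinely new content — the non-isomorphisms internal to $\F_{(2)}$, and the $N\ge 4$ base case above — is where I expect the real work to lie, to be handled by the Whitehead-graph and co-carrying techniques of Section~\ref{s:aal}, which detect the ambient rank.
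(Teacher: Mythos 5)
Your reduction to a single global permutation $\pi$ of the rank labels, and the join/join-irreducible analysis showing that $\pi$ preserves $\{1,k\}$, runs parallel to the first step of the paper's argument (there phrased via diameter: links of vertices of intermediate rank are joins, hence have diameter $2$, while links of rank-$1$ and rank-$k$ vertices do not). But the proposal has a genuine gap exactly where the real content of the proposition lies: you never rule out the rank-reversing possibility. Both the case $\pi(1)=k$ and your base case $k=2$, $N\ge 4$ (an automorphism of the bipartite graph $\F_{(2)}$ swapping the two sides) are left as statements you ``expect'' to be handled elsewhere --- non-isomorphism of truncated factor complexes of distinct ranks, or an unspecified argument about ``complements'' of factors, which are not canonical and not visibly graph-theoretic. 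The paper disposes of precisely this point with a short metric criterion in the spirit of \cite[Prop.~3.4]{BB}: if $A$ has rank $k$ and $C$ is a rank-one vertex antipodal to $A$ (so $A\ast C$ is a free factor of rank $k+1$), then $d(A,C)\ge 3$ while \emph{every} $B\in\Lk(A)$ satisfies $d(B,C)\le 2$, since $B\ast C$ is a free factor of rank at most $k$; by contrast, for a rank-one vertex $C$ and any vertex $A$ at distance $3$ one can find a rank-two $B$ adjacent to $C$ with $A\cap B$ trivial, whence $d(A,B)>2$. This asymmetry, invariant under graph automorphisms, separates rank $1$ from rank $k$ and is what your outline is missing.

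Two further points. First, your identification of the upper link $U_A$ with $\F_{(k-r)}(F_{N-r})$ is unjustified and, as stated, false: the natural comparison with free factors of a corank-$r$ quotient is not injective (e.g.\ $\langle a,bc\rangle$ and $\langle a,bac\rangle$ are distinct rank-two factors of $F_3$ containing $\langle a\rangle$ with the same image modulo $\langle\langle a\rangle\rangle$), so your final reduction ``any such isomorphism forces $\F_{(2)}(F_a)\cong\F_{(2)}(F_b)$'' does not stand as written; for the join-irreducibility you need only that $U_A$ has diameter greater than $2$, which should be argued directly. Second, once the set of rank-$k$ vertices is known to be preserved, the intermediate ranks are handled much more cheaply than by your induction: using transitivity of $\autN$ on rank-$k$ vertices one may assume $\phi$ fixes a rank-$k$ vertex $A$, and then $\phi$ restricts to an automorphism of $\Lk(A)\cong\mathcal{AF}_k$, where rank-preservation is already known from \cite{BB}.
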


\begin{proof} The proof follows the proof of the case $k=N-1$ in \cite{BB}. If $A$ is a vertex of rank $i$ with $1<i<k$, then $\Lk(A)\subset \F_{(k)}$ is a join $\Lk_+(A)\ast \Lk_-(A)$, where $\Lk_+(A)\subset\Lk(A)$ is the subgraph spanned by neighbouring vertices of rank greater than $i$ and
$Lk_-(A)$ is the subgraph spanned by neighbouring vertices of rank less than $i$. Thus $\Lk(A)$
has (combinatorial) diameter $2$ unless $A$ has rank $1$ or $k$. In these cases,
$\Lk(A)$ has diameter greater than $2$; see  \cite[Lemma 3.2]{BB}.  

An additional argument is needed to distinguish rank one vertices from rank $k$ vertices. The key point to
observe here is that if $A$ is a rank $k$ vertex and $C$ is a rank one vertex, and $C$ is antipodal to
$A$, that is $\<A,C\> = A\ast C$ is a free factor of rank $k+1$, then $d(B,C)\leq 2$ for all vertices $B\in \Lk(A)$,
because $B\ast C<F_N$ is a free factor of rank at most $k$.  In contrast, given any rank $k$ vertex $A$ 
and a rank one vertex $C$ with $d(A,C)=3$, one can always find a rank two vertex $B$
adjacent to $C$ such that $A\cap B$ is trivial, meaning that $d(A,B)>2$; see \cite[Prop. 3.4]{BB}.

It remains to prove that an arbitrary automorphism $\phi$ of $\F_{(k)}$ preserves the rank of vertices 
of rank $1<r<k$. Let $B$ be such a vertex and let $A$ be a rank $k$ vertex containing $B$. As
$\autN$ acts transitively on the set of vertices of any given rank, we may compose $\phi$ with an 
element of $\autN$ to suppose that $\phi$ fixes $A$, in which case $\phi(B)$ is the image of $B$ under an
automorphism of $\Lk(A)$. There is an obvious rank-preserving
automorphism from this link to $\mathcal{AF}_k$, and any automorphism of the latter preserves rank \cite{BB}.
\end{proof} 
 
\subsection{Standard apartments of free factors}

We now get to work on justifying the third bullet point in the proof of Proposition~\ref{p:going_up_2}. We start with standard apartments and their invariance under $\aut(\mathcal{F}_{(k-1)})$. The arguments in this section follow the template
of the corresponding results in \cite{BB}.

\begin{definition}[Standard apartments and rank-isomorphisms] \label{d:standard_apartment} For $k\le N$,
a {\em standard $k$-apartment} $\Delta$  in $\mathcal{AF}_N$ is the full subcomplex whose vertices are the
free factors spanned by the non-empty proper subsets of  a basis 
for a  free factor $A<F_N$ of rank $k$ (see Figure~\ref{f:apartment}).
We say that a subcomplex $\Lambda \subset \mathcal{AF}_N$ is a {\em putative $k$-apartment} 
if there is a rank-preserving isomorphism from $\Lambda$ to a standard $k$-apartment.
\end{definition}

The following proposition tells us how to determine if a putative apartment is actually a standard apartment.

\begin{proposition}[Characterising standard apartments by their antipodes] \label{p:standard_characterisation}
Let  $\Lambda \subset \mathcal{F}_{(k-1)}$ be a putative $l$-apartment with $k\ge l \geq 3$. Suppose that for each rank 1 vertex $\langle u \rangle \in \Lambda$ and each vertex $A\in \Lambda$ of rank at least $2$, either $A$ and $u$ are adjacent (so $u \in A$) or $\langle A , u \rangle$ generates a free factor of rank ${\rm{rank}}(A) + 1$. Then $\Lambda$ is a standard $l$-apartment.
\end{proposition}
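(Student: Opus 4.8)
The plan is to prove the (equivalent, but more explicit) statement that $\Lambda$ is the standard $l$-apartment of a basis $\{u_1,\dots,u_l\}$ of some rank-$l$ free factor, by induction on $l$, with the rank-two case as the engine. Throughout I would use two standard facts: if $P\le Q$ are both free factors of $F_N$ then $P$ is a free factor of $Q$ (so $P=Q$ as soon as $\rk P=\rk Q$), and an element of a free factor is primitive in it iff it is primitive in $F_N$. I also use that a putative $l$-apartment is, as a complex, the barycentric subdivision of $\partial\Delta^{l-1}$: it has $l$ rank-one vertices $\langle u_1\rangle,\dots,\langle u_l\rangle$, one vertex $A_S$ of rank $|S|$ for each nonempty proper $S\subsetneq\{1,\dots,l\}$, with $A_S,A_T$ adjacent in $\Lambda$ iff one of $S,T$ contains the other; the rank-one vertices in the lower link of $A_S$ are exactly $\{\langle u_i\rangle:i\in S\}$, so after choosing representatives $\langle u_i:i\in S\rangle\le A_S$. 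Since distinct vertices of $\Lambda$ are distinct vertices of $\mathcal{AF}_N$, the $\langle u_i\rangle$ are pairwise non-conjugate and the $A_S$ are pairwise distinct free factors. (Before starting I would fix a coherent system of representatives realizing all inclusions $A_S\subseteq A_T$, $S\subseteq T$, literally; this is routine and lets the arguments below be taken at face value.)

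The key step is the rank-two case: every rank-two vertex $A_{ij}$ of $\Lambda$ equals $\langle u_i,u_j\rangle$, which is then freely generated by $\{u_i,u_j\}$. To see this, pick $m\notin\{i,j\}$ (possible since $l\ge 3$) and consider the rank-two vertex $A_{im}$. As $u_i$ is primitive in the rank-two free factor $A_{im}$, write $A_{im}=\langle u_i\rangle\ast\langle c\rangle$. If the pair $(\langle u_j\rangle,A_{im})$ is non-adjacent, the hypothesis gives that $\langle A_{im},u_j\rangle=\langle u_i,c,u_j\rangle$ is a free factor of rank $3$; being generated by three elements it is free of rank $3$ on $\{u_i,c,u_j\}$, so $\langle u_i,u_j\rangle$ is a rank-two free factor of it, hence a rank-two free factor of $F_N$. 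Since $\langle u_i,u_j\rangle\le A_{ij}$ and both have rank two, $\langle u_i,u_j\rangle=A_{ij}$. (When $u_j$ is conjugate into $A_{im}$ one uses another $A_{im'}$, $m'\notin\{i,j\}$, for which the pair is non-adjacent; for $l=3$ one shows directly that this "extra adjacency" would force two of the three rank-two vertices to coincide, contradicting distinctness.)

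Granting the rank-two case, the induction runs as follows. For $l=3$: with representatives chosen so that $A_{\{1,2\}}=\langle u_1,u_2\rangle$ and $A_{\{1,3\}}=\langle u_1,u_3\rangle$, the hypothesis applied to $(\langle u_3\rangle,A_{\{1,2\}})$ shows $Y:=\langle u_1,u_2,u_3\rangle$ is a rank-three free factor with basis $\{u_1,u_2,u_3\}$; then $A_{\{2,3\}}$ contains the rank-two free factor $\langle u_2,u_3\rangle$ (a sub-basis of $Y$) and so equals it, and the rank-one vertices are $\langle u_1\rangle,\langle u_2\rangle,\langle u_3\rangle$, so $\Lambda$ is the standard $3$-apartment of $\{u_1,u_2,u_3\}$. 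For $l\ge 4$: the lower link of each rank-$(l-1)$ vertex $A_{\hat\jmath}$ is a putative $(l-1)$-apartment satisfying the hypothesis, so by the inductive case it is the standard $(l-1)$-apartment of a basis, whence $A_{\hat\jmath}$ is freely generated by its rank-one lower-link vertices; assembling one compatible basis $\{u_1,\dots,u_l\}$ (choosing representatives along a maximal flag of the $A_S$, the rank-two case guaranteeing consistency), the hypothesis applied to $(\langle u_l\rangle,A_{\{1,\dots,l-1\}})$ shows $Y:=\langle u_1,\dots,u_l\rangle$ is a rank-$l$ free factor with basis $\{u_1,\dots,u_l\}$. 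Then each $A_S$ contains the rank-$|S|$ free factor $\langle u_i:i\in S\rangle$ and, having the same rank, equals it, so $\Lambda$ is the standard $l$-apartment of $\{u_1,\dots,u_l\}$.

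The main obstacle is bookkeeping rather than conceptual: ruling out the degenerate configurations in which a rank-one vertex $\langle u_m\rangle$ is unexpectedly adjacent in $\mathcal{AF}_N$ to an $A_S$ with $m\notin S$ (so that only the weaker "$u_m\in A_S$" branch of the hypothesis applies, not the antipodal one), and keeping conjugacy representatives coherent so that the bases of the $A_S$ produced locally glue to a single global basis. The first is dealt with exactly as in the $l=3$ remark above, by playing distinct rank-two (or rank-$r$) vertices off against one another and invoking distinctness of vertices; the second becomes harmless once the rank-two case has pinned down each $A_{ij}$ on the nose.
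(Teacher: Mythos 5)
Your proof is essentially the paper's: the same induction on $l$, with the hexagon base case handled by exactly the same device (a rank-one vertex antipodal to a rank-two vertex, together with primitivity of a second rank-one vertex inside that rank-two vertex, yields a rank-three factor whose sub-bases pin down each rank-two vertex via the ``contained and of equal rank, hence equal'' comparison), and the inductive step likewise runs over the facets and then applies the antipodality hypothesis to the remaining rank-one vertex to produce the rank-$l$ factor. The bookkeeping you defer (coherent conjugacy representatives, and the possibility of an ``extra adjacency'' between opposite vertices) is glossed over in the paper's own proof as well, so your proposal matches it in both substance and level of rigour.
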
 

\begin{proof}
We proceed by induction on $l$,
 starting with the case where $l =3$. In this case $\Lambda$ is a hexagon with alternating rank 1 and rank 2 vertices. Let $\langle x_1 \rangle, \langle x_2 \rangle, \langle x_3 \rangle$ be the rank 1 vertices and let $X_1, X_2, X_3$ be their opposite vertices. By hypothesis, $A= \langle x_1 \rangle \ast X_1$ is a rank 3 free factor, and $x_2$ is primitive in $X_1$, so $\langle x_1, x_2 \rangle$ is a rank 2 free factor. As $\langle x_1, x_2 \rangle < X_3$ and both are rank 2 free factors, $X_3 = \langle x_1, x_2 \rangle$. Similarly, $X_1 = \langle x_2, x_3 \rangle$ and $X_2 = \langle x_1, x_3 \rangle$. Thus $A=\langle x_1, x_2, x_3 \rangle$ and $\Lambda$ is the standard $3$-apartment for this basis of $A$.

We assume now that $\Lambda$ is a putative $l$-apartment with $l > 3$ and that the result holds for the boundary of every  facet of $\Lambda$, which means that each $(l-1)$-element subset of $l-1$ rank 1 vertices in $\Lambda$ generates a free factor of rank $l-1$, which much be equal to the free factor $A \in \Lambda$ adjacent to all of these vertices. If $\langle x_l \rangle$ is the remaining rank 1 vertex then by hypothesis $A \ast \langle x_l \rangle$ is a free factor of rank $l$.
The vertices of $\Lambda$ form a basis for this free factor. Thus
$\Lambda$ is standard. This completes the induction.
\end{proof}

\begin{corollary}[Standard apartments are invariant under $\aut(\mathcal{F}_{(k-1)})$] \label{c:standard_preservation}
For $3\le k \le N$, every automorphism of $\mathcal{F}_{(k-1)}$ sends standard $k$-apartments to standard $k$-apartments.
\end{corollary}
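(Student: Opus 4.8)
The plan is to reduce the statement to Proposition~\ref{p:standard_characterisation}, using the Graded Antipode Lemma to recognise the defining antipodal property of a standard apartment from the combinatorics of $\mathcal{F}_{(k-1)}$ alone. Let $\phi$ be an automorphism of $\mathcal{F}_{(k-1)}$ and let $\Delta$ be a standard $k$-apartment. Since $2\le k-1\le N-1$, Proposition~\ref{p:rank} tells us that both $\phi$ and $\phi^{-1}$ preserve the rank of every vertex; in particular $\phi$ restricts to a rank-preserving isomorphism from $\Delta$ onto $\Lambda:=\phi(\Delta)$, so $\Lambda$ is a putative $k$-apartment lying inside $\mathcal{F}_{(k-1)}$. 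It then suffices to verify that $\Lambda$ meets the hypothesis of Proposition~\ref{p:standard_characterisation} with $l=k$, since that proposition will then force $\Lambda$ to be standard.

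The key intermediate step I would prove is a metric reformulation of antipodality: for a vertex $A$ of rank $m\ge 2$ and a rank one vertex $\langle u\rangle$ with $u\notin A$, the subgroup $\langle A,u\rangle$ is a free factor of rank $m+1$ \emph{if and only if} every rank one vertex $\langle p\rangle$ in the lower link of $A$ has a common rank-two neighbour with $\langle u\rangle$ in $\mathcal{F}_{(k-1)}$. To see this, conjugate by an automorphism of $F_N$ so that $A=\langle x_1,\dots,x_m\rangle$ (legitimate, as every rank $\ge 2$ vertex of $\Delta$ is a proper free factor of rank at most $k-1\le N-1$), and apply the Graded Antipode Lemma to $A$ and the primitive element $u$. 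Because $u\notin A$, exactly one of its remaining two alternatives holds: either $\langle A,u\rangle$ is a free factor of rank $m+1$, or there is a primitive $p\in A$ with $\langle p,u\rangle$ contained in no free factor of rank $2$. Since distinct rank one vertices are never adjacent, a primitive element of $F_N$ lying in a rank two free factor is primitive there, so ``$\langle p,u\rangle$ lies in a rank two free factor'' is exactly ``$\langle p\rangle$ and $\langle u\rangle$ share a rank two neighbour''. Thus the two alternatives of the Lemma are precisely the two sides of the asserted equivalence.

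With this in hand, the remainder is transport of structure along $\phi$. In the standard apartment $\Delta$, for every rank one vertex $\langle u\rangle\in\Delta$ and every vertex $A_0\in\Delta$ of rank $m\ge 2$, either $u\in A_0$ or $\langle A_0,u\rangle$ is the rank $m+1$ free factor spanned by the union of the two corresponding basis subsets. The first alternative, adjacency of a rank one vertex with a higher-rank vertex, is plainly $\phi$-invariant. For the second alternative, the condition appearing in the equivalence above, namely ``every rank one vertex in the lower link of $A_0$ shares a rank two neighbour with $\langle u\rangle$'', is $\phi$-invariant because $\phi$ and $\phi^{-1}$ are rank-preserving graph automorphisms and therefore carry lower links to lower links and common rank two neighbours to common rank two neighbours; applying the equivalence once more (now to $\phi(A_0)$ and the primitive element representing $\phi\langle u\rangle$) shows that being a rank $m+1$ free factor is $\phi$-invariant as well. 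Hence, for each rank one vertex $\langle v\rangle\in\Lambda$ and each vertex $A\in\Lambda$ of rank $\ge 2$, either $v\in A$ or $\langle A,v\rangle$ is a free factor of rank ${\rm{rank}}(A)+1$; that is exactly the hypothesis of Proposition~\ref{p:standard_characterisation}, so $\Lambda=\phi(\Delta)$ is a standard $k$-apartment, as required.

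The step I expect to demand the most care is the metric reformulation in the middle paragraph. One must make sure that ``no rank two free factor contains $\langle p,u\rangle$'' is genuinely equivalent to the purely graph-theoretic statement ``$\langle p\rangle$ and $\langle u\rangle$ have no common neighbour of rank two in $\mathcal{F}_{(k-1)}$'' (so that it is visibly $\phi$-invariant); this rests on the elementary facts that distinct rank one vertices are non-adjacent and that primitivity passes to free factors, so that a common rank two neighbour is the only mechanism by which $p$ and $u$ can share a rank $\le 2$ free factor. One must also be careful that the Graded Antipode Lemma is being applied to an honest rank $\ge 2$ proper free factor, and that conjugating it to the standard form $\langle x_1,\dots,x_m\rangle$ by an element of $\aut(F_N)$ respects all the conditions involved — which it does, everything in sight being $\aut(F_N)$-equivariant. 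Beyond this, the argument is bookkeeping with the rank-preservation supplied by Proposition~\ref{p:rank}.
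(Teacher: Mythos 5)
Your proposal is correct and follows essentially the same route as the paper: use the Graded Antipode Lemma to recast antipodality of a rank $\ge 2$ vertex $A$ and a rank one vertex $\langle u\rangle$ as a purely rank-and-adjacency condition (every rank one vertex below $A$ shares a rank two neighbour with $\langle u\rangle$), invoke Proposition~\ref{p:rank} to see this condition is preserved by any automorphism of $\mathcal{F}_{(k-1)}$, and conclude via Proposition~\ref{p:standard_characterisation}. The only difference is that you spell out details (e.g.\ the equivalence of the Lemma's second alternative with $\langle A,u\rangle$ being a free factor of rank $\mathrm{rank}(A)+1$) that the paper leaves implicit.
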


\begin{proof}
By the Graded Antipode Lemma, if $A$ is a vertex of $\mathcal{F}_{(k-1)}$ with rank at least 2 and $\langle u \rangle$ is a vertex of rank 1, then $A \ast \langle u \rangle$ is a free factor of $F_N$ if and only if $\langle u \rangle$ is not adjacent to $A$ and for every rank 1 vertex $\langle p \rangle$ adjacent to $A$ there exists a rank 2 vertex $B$ containing both $\langle p \rangle$  and $\langle u \rangle$. These conditions can be phrased purely in terms of distance  in $\mathcal{F}_{(k-1)}$
and rank, which Proposition \ref{p:rank} assures us is preserved by  $\aut(\mathcal{F}_{(k-1)})$. 
Proposition  \ref{p:standard_characterisation} completes the proof. \end{proof}

\subsection{Crawling through links}

We have shown that the set of standard $k$-apartments is preserved by automorphisms of $\mathcal{F}_{(k-1)}$. We now want to extend this argument to the set of links of rank $k$ factors.

\begin{proposition} \label{p:crawling}
Let $\Delta$ and $\Lambda$ be standard $k$-apartments  in $\mathcal{F}_{(k-1)}\subset \mathcal{F}_{(k)}$, with $k \geq 3$. Then, $\Delta$ and $\Lambda$ are in the link of the same rank $k$ factor in $\mathcal{F}_{(k)}$ if and only if there exists a sequence \[ \Delta=\Delta_0, \Delta_1, \ldots ,\Delta_n = \Lambda \] of standard $k$-apartments such that for all $i$ the intersection $\Delta_i \cap \Delta_{i+1}$ contains an antipodal pair of vertices $B_i$ and $\langle u_i \rangle$ of ranks $k-1$ and 1 respectively. 
\end{proposition}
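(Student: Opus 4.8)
The plan is to isolate one elementary property of standard apartments and then, for the harder implication, run an explicit change-of-basis (``crawling'') argument. The key property is this: if $\Delta$ is a standard $k$-apartment, with rank $k$ factor $A_\Delta$ and associated basis $\{y_1,\dots,y_k\}$, then any antipodal pair of vertices of $\Delta$ of ranks $k-1$ and $1$ generates $A_\Delta$, and is in fact an opposite pair of $\Delta$. Indeed a rank $k-1$ vertex of $\Delta$ has the form $B=\langle y_i : i\in S\rangle$ with $|S|=k-1$, a rank $1$ vertex has the form $\langle y_m\rangle$, and antipodality forces $m\notin S$, whence $\langle B,y_m\rangle=\langle y_1,\dots,y_k\rangle=A_\Delta$. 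I shall also use that every vertex of a standard $k$-apartment is a proper free factor of its rank $k$ factor, hence adjacent to it in $\mathcal{F}_{(k)}$; so $\Delta\subset\lk(A_\Delta)$.

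Granting this, the ``if'' direction is short: given the chain $\Delta=\Delta_0,\dots,\Delta_n=\Lambda$ with $\Delta_i\cap\Delta_{i+1}$ containing an antipodal pair of ranks $k-1$ and $1$, the key property applied inside $\Delta_i$ and inside $\Delta_{i+1}$ shows this pair generates both $A_{\Delta_i}$ and $A_{\Delta_{i+1}}$, so these coincide; chaining, all the $A_{\Delta_i}$ equal a single rank $k$ factor $A$, and $\Delta=\Delta_0\subset\lk(A)$ while $\Lambda=\Delta_n\subset\lk(A)$. For the ``only if'' direction, suppose $\Delta$ and $\Lambda$ both lie in $\lk(A)$ for some rank $k$ factor $A$. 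One first checks that $\Delta$ and $\Lambda$ are then standard apartments of the \emph{same} group $A$: the rank $k$ factor of a standard apartment is recoverable from the apartment (it is generated by the apartment's rank one vertices), and, since a rank $k$ free factor of $F_N$ contained in a rank $k$ free factor must coincide with it, a standard $k$-apartment lies in the link of no rank $k$ factor other than its own. Writing $\mathbf{y}$ and $\mathbf{z}$ for bases of $A$ that give $\Delta$ and $\Lambda$ respectively, it remains to connect $\Delta$ to $\Lambda$ by a chain of standard apartments of $A$, consecutive ones sharing an antipodal $(k-1,1)$-pair.

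This last step is the crawling argument. Express the change of basis $y_i\mapsto z_i$ as a product of elementary Nielsen transformations of $A$ — transvections $y_a\mapsto y_ay_b^{\pm1}$ or $y_a\mapsto y_b^{\pm1}y_a$, inversions, and transpositions of basis elements — realise these one at a time to obtain a sequence of bases $\mathbf{y}=\mathbf{y}^{(0)},\dots,\mathbf{y}^{(m)}=\mathbf{z}$, and take the associated standard apartments as the chain. An inversion or a transposition leaves the standard apartment unchanged, since each subset of the new basis generates the same subgroup as the corresponding subset of the old one. For a transvection on indices $a\ne b$, choose any index $j\notin\{a,b\}$; such $j$ exists precisely because $k\ge3$. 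Then the rank $1$ vertex $\langle y^{(i)}_j\rangle$ is left unchanged (its basis element is untouched), and the rank $k-1$ vertex $\langle y^{(i)}_l : l\ne j\rangle$ is left unchanged because its generating set still contains $y^{(i)}_b$, so modifying $y^{(i)}_a$ by a factor of $y^{(i)}_b$ does not alter the subgroup it generates; moreover this opposite pair of the apartment is antipodal and generates $A$. Hence consecutive apartments in the chain share an antipodal $(k-1,1)$-pair, and the chain connects $\Delta$ to $\Lambda$, completing the proof.

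The step demanding the most care is this crawling argument: one must verify that a transvection fixes both $\langle y_j\rangle$ and $\langle y_l:l\ne j\rangle$ for \emph{every} index $j\notin\{a,b\}$ — which is exactly where the hypothesis $k\ge3$ is used, since it guarantees such a $j$ exists — and that inversions and transpositions genuinely preserve the standard apartment. Two standard facts underpin the argument: that $\aut(F_k)$ is generated by elementary Nielsen transformations, and that a free factor of $F_N$ contained in a free factor $A$ is itself a free factor of $A$; a little bookkeeping with conjugates is also needed in the ``only if'' direction to pin down the centre of a standard apartment.
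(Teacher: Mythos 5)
Your proposal is correct and follows essentially the same route as the paper: the ``if'' direction is the observation that a shared antipodal $(k-1,1)$-pair generates the common rank $k$ factor, and the ``only if'' direction factors the change of basis of $A$ into elementary Nielsen moves and, for each transvection $\rho_{ij}$, uses $k\ge 3$ to pick an index $l\notin\{i,j\}$ so that $\langle y_l\rangle$ and the complementary rank $k-1$ factor are preserved, giving the shared antipodal pair. The only cosmetic difference is that you work directly with Nielsen transformations of $A$ acting on its bases, whereas the paper phrases the same induction via generators of the stabiliser $\aut(F_N,A)$ and its transitive action on standard apartments in $\lk(A)$.
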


\begin{proof}
If the stated condition holds then each apartment in the sequence is in the link of the same rank $k$ factor $A=B_i \ast \langle u_i \rangle$. We therefore focus on the converse assertion: we assume that $\Delta$ and $\Lambda$ are in the link of the same free factor $A$ and build a chain between them. The key fact to use here is that the subgroup 
$\aut(F_N, A)<\aut(F_N)$ that preserves $A$ acts transitively on the set of standard $k$-apartments in $\lk(A)$, since it acts transitively on the bases of $A$. Furthermore, if we choose a basis $x_1, \ldots, x_N$ of $F_N$ such that $A$ is generated by the first $k$ elements, then  $\aut(F_N, A)$  is generated by the inversions $x_i\mapsto x_i^{-1}$ and the right Nielsen automorphisms $\rho_{ij}:x_i\mapsto x_ix_j$  that preserve $A$ (this is well-known, e.g. \cite[Theorem~4.2]{MR3179663}). By induction on the word length of automorphisms with respect to this basis,  
we can reduce to the case where $\Lambda=\phi(\Delta)$ for some Nielsen automorphism or inversion 
preserving $A$ with $\Delta$ the standard $k$-apartment of the basis $\{ x_1, \ldots, x_k \}$. If $\phi$ is an inversion, then $\phi$ fixes $\Delta$ and we are done. The other possibility is $\phi=\rho_{ij}$ where either $i > k$ or $i,j \leq k$. If $i >k$ then $\rho_{ij}$ fixes $\Delta$ and we are done. Otherwise, choose $l \leq k$ not equal to either $i$ or $j$. Let $B$ be the factor generated by the basis elements of $A$ other than $x_l$. Then both $B$ and $\langle x_l \rangle$ are preserved by $\rho_{ij}$, so these vertices both lie in $\Delta$ and in $\Lambda=\rho_{ij}(\Delta)$. 
\end{proof}

\noindent{\bf{End of the Proof of Theorem \ref{t:f2_is_rigid}.}}
At the beginning of this section we reduced Theorem \ref{t:f2_is_rigid} to the three bullet points in the
statement of Proposition \ref{p:going_up_2}, and following Proposition \ref{p:rank} it only remained to prove
that automorphisms of $\mathcal{F}_{(k-1)}\subset \mathcal{F}_{(k)}$ preserved the set of links of rank $k$
vertices. We have proved that standard $k$-apartments and pairs of antipodal vertices are preserved by
all  automorphisms of $\mathcal{F}_{(k-1)}$, so the set of chains satisfying the conditions
of Proposition \ref{p:crawling} is also preserved. Thus links are preserved, as required, and we are done. \qed

\section{Constructing an action of $\Comm(\G)$ on $\mathcal{F}_{(2)}$.} \label{s:fat_action}

With \cite{BB} and the results of the previous section in hand, we will be able to establish commensurator-rigidity for
$\autN$  if we can extend the action of $\autN$ on $\mathcal{F}_{(2)}$
to an action by $\Comm(\autN)$. More generally, to prove Theorem B, we must extend the action of $\G$
to an action by $\Comm(\G)$ for every ample subgroup $\G < \autN$. That is our goal in this section.

\subsection{Direct products in ample subgroups of $\aut(F_N)$}\label{s:direct_products}

In \cite{BW}, we characterized the subgroups of $\Aut(F_N)$ and $\Out(F_N)$ that are direct products of free groups with the maximum number of factors.  A key example is obtained by taking a basis $\mathcal{B}=\{a_1,a_2, x_1,\dots,x_{N-2}\}$ of $F_N$ and defining
\begin{equation}\label{eq:standard1} D=L_1\times R_1
\times\dots\times L_{N-2}\times R_{N-2}\times I \end{equation}
where $I<\inn (F_N)$ is the group of inner automorphisms $\ad_w$ with 
$w\in\<a_1,a_2\>$ and
$L_i$  (resp. $R_i$)
is the group of automorphisms, fixing $x_j$ for $j\neq i$,
that have the form $x_i\mapsto w x_i$ (resp. $x_i\mapsto x_i w$)
with $w\in \<a_1,a_2\>$.  

We proved in \cite{BW}  that every direct product of $2N-3$ free groups in $\aut(F_N)$ is a variation on this example.
More precisely, for every such subgroup $D$   there exists a basis such that the direct factors of $D$ are contained in the groups \[ \langle L_1, \tau \rangle,\ldots, \langle L_{N-2}, \tau \rangle, \ldots, \langle R_{1}, \tau \rangle, \langle R_{N-2}, \tau \rangle, \langle I , \tau \rangle, \] where $\tau$ is the Nielsen automorphism  $a_1 \mapsto a_1 a_2$.
Writing $L_i^\tau$, $R_i^\tau$ and $I^\tau$ to denote the subgroups of $L_i, R_i$ and $I$ that commute with $\tau$, we
classified all of the possibilities for $D$ as follows.

\begin{theorem}[\cite{BW}, Theorem~7.1 and Proposition~7.3] \label{t:direct-products-aut}
Let $N \geq 3$. If $D<\aut(F_N)$ is a direct product of $2N-3$ nonabelian free groups, then a conjugate of $D$ is contained in one of the following groups:
\begin{itemize}
\item $L_1 \times \cdots \times L_{N-2} \times R_1 \times \cdots \times R_{N-2} \times I$
\item $L_1^\tau \times \cdots \times L_{N-2}^\tau \times R_1^\tau \times \cdots \times R_{N-2}^\tau \times I^\tau \times \langle \tau \rangle $
\item $\langle \tau, L_1 \rangle \times L_2^\tau \times \cdots L_{N-2}^\tau \times R_1^\tau \times \cdots R_{N-2}^\tau \times I^\tau$
\item $L_1^\tau \times \cdots \times L_{N-2}^\tau \times R_1^\tau \times \cdots \times R_{N-2}^\tau \times \langle I, \tau \rangle $
\end{itemize}
The centralizer of $D$ in $\aut(F_N)$ is trivial unless $D$ falls under the second case, in which case its centralizer is $\langle \tau \rangle$.
\end{theorem}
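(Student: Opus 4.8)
The plan is to reduce first to the case $D\cong F_2^{2N-3}$: replace each $D_i$ by a rank-two free subgroup, prove the structure statement in that case, and then recover the general statement by showing each $D_i$ lies in the centraliser of the product of the chosen rank-two subgroups of the other factors and computing these centralisers. A convenient first observation is that $\Inn(F_N)\cong F_N$ is free, so $D\cap\Inn(F_N)$ is a normal subgroup of $D$ that contains no two commuting nontrivial subgroups and is therefore contained in a single direct factor of $D$; this is the factor that becomes $I$. Passing to $\Out(F_N)$, the image $\bar D$ still contains a direct product of at least $2N-4\geq 2$ nonabelian free groups.

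Next I would show $\bar D$ virtually preserves a proper free factor system. The key point is that for any element $\bar\phi=(\bar\phi_i)_i$ of a direct product $\prod_i\bar D_i$ with at least two nonabelian free factors, the centraliser $\prod_i C_{\bar D_i}(\bar\phi_i)$ is never virtually cyclic: it contains either a whole nonabelian factor or a copy of $\Z^2$. Since fully irreducible elements of $\Out(F_N)$ have virtually cyclic centralisers, $\bar D$ contains no fully irreducible element, so by the subgroup alternative of Handel--Mosher a finite-index subgroup of $\bar D$ preserves a proper free factor system (or free splitting). Taking a canonical refinement of the finite orbit of such systems and lifting back to $\Aut(F_N)$, I may assume $D$ preserves a proper --- and, I would arrange, maximal --- free factor system $\mathcal F$.

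The heart of the argument is then an induction on $N$ together with the complexity of $\mathcal F$. Writing $F_N=A_1\ast\cdots\ast A_r\ast F_s$ for the free product decomposition determined by $\mathcal F$, the stabiliser of $\mathcal F$ in $\Aut(F_N)$ fits in an exact sequence whose quotient involves the relative automorphism groups of the pieces $A_1,\dots,A_r$ and whose kernel is generated by partial conjugations along the edges of the associated splitting together with transvections $x_i\mapsto x_i w$ into the free part. One tracks the $2N-3$ direct factors of $D$ through this sequence: a factor projecting nontrivially to the relative automorphism group of some $A_j$ of rank at least two is handled by the inductive hypothesis, and --- since $2N-3$ is exactly the product rank of $\Aut(F_N)$, so there is no slack --- the factor count can only balance if exactly one $A_j$ has rank two and the remaining pieces are infinite cyclic. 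Renaming, $A_j=\langle a_1,a_2\rangle$ and the others are $\langle x_1\rangle,\dots,\langle x_{N-2}\rangle$, so a conjugate of $D$ lies in the group generated by the transvections $L_i,R_i$, the inner automorphisms $I$ by $\langle a_1,a_2\rangle$, and the twist $\tau\colon a_1\mapsto a_1a_2$. A direct analysis of how a product of $2N-3$ nonabelian free groups can sit inside $\langle L_1,\dots,R_{N-2},I,\tau\rangle$ --- using that $\langle a_1,a_2\rangle$ is free, so $\tau$ is absorbed by at most one factor, and that $L_i$, $R_i$, $I$ are mutually commuting coordinates each isomorphic to $\langle a_1,a_2\rangle$ --- produces exactly the four listed possibilities, according to whether $\tau\notin D$, whether $\tau$ spans its own cyclic factor (forcing $D$ into the $\tau$-centralisers), or whether $\tau$ is absorbed into an $L$-factor or into the $I$-factor.

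Finally, for the general $D$ I would apply the $F_2^{2N-3}$ case to a product of rank-two subgroups to fix the basis and deduce $D_i\subset\langle L_i,\tau\rangle$, $\langle I,\tau\rangle$, and so on, by computing the relevant centralisers: this is a fixed-subgroup computation, as an automorphism commuting with all of $L_i$ and with all of $I$ is completely pinned down once one sees where it must send each $x_i$ and each $a_k$. The centraliser of $D$ itself drops out of the same computation, being trivial except in the second case, where only the powers of $\tau$ survive. The step I expect to be the genuine obstacle is the inductive step: setting up the relative versions cleanly (relative fully irreducibles, relative twist subgroups, relative product rank) and making the bookkeeping of direct factors through the exact sequence tight enough to force the standard decomposition; a secondary nuisance is that a finite-index subgroup of $F_2^m$ need not be a direct product, so one must either avoid passing to finite index or verify that a finite-index subgroup still contains a product of $2N-3$ nonabelian free groups realising the same free factor system.
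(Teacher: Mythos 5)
You should first note that this paper does not prove Theorem~\ref{t:direct-products-aut} at all: it is imported verbatim from \cite{BW} (Theorem~7.1 and Proposition~7.3), so the only comparison available is with that external proof. As the other citations to \cite{BW} in this paper indicate, that argument runs through actions on trees: one shows that the image of (most of) $D$ in $\Out(F_N)$ fixes a unique point of the boundary of Outer space which is a rose with $N-2$ petals, and then pins down $D$ inside the stabilizer using the Bass--Serre tree of that rose together with quantitative product-rank bounds for arc and vertex stabilizers (e.g.\ the bound $2N-5$ for arcs of length at least two). Your route -- no fully irreducibles, hence a virtually invariant free factor system by the subgroup alternative, then an induction through the stabilizer of that system -- is a genuinely different plan, but it is only a plan where it matters most.

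The genuine gap is the inductive step you yourself flag. The assertion that ``the factor count can only balance if exactly one $A_j$ has rank two and the remaining pieces are infinite cyclic'' is precisely the hard content of the theorem, and nothing in your sketch forces it: direct factors of a subgroup of a group extension need not distribute along the exact sequence (a factor can meet the kernel in a proper subgroup, or inject into the quotient with a diagonal twist), so ``tracking the $2N-3$ factors'' requires additivity-type lemmas for product rank of the kernel (generated by partial conjugations and transvections, which is not free, so the normal-subgroup trick you used for $D\cap\Inn(F_N)$ does not apply) and of the relative automorphism groups of the $A_j$ -- exactly the stabilizer bounds that \cite{BW} proves and that you do not supply. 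Moreover, invoking ``$2N-3$ is exactly the product rank of $\Aut(F_N)$'' is close to circular, since that upper bound is part of the same circle of results. Two smaller repairs are also needed: the factors of $D$ need not be finitely generated, so you must use Horbez's version of the Handel--Mosher alternative (and note that passing to a finite-index subgroup is harmless because intersecting with the factors again yields a product of $2N-3$ nonabelian free groups); and in the final reduction from general $D$ to $F_2^{2N-3}$, the centralizer computation only gives $D_i\subset\langle L_i,\tau\rangle$ etc.\ if the chosen rank-two subgroups $E_j$ are not contained in proper special subgroups whose centralizers are larger, so that choice has to be made, and justified, with care.
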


We shall refer to the unique factor contained in $\langle I , \tau \rangle$ as the \emph{inner factor} of $D$ and denote this $I(D)$. We say that a factor contained in $\langle L_i, \tau \rangle$ is \emph{untwisted} if it is contained in $L_i$ and is \emph{twisted} otherwise, and we use the same language for factors contained in subgroups of the form $\langle R_i, \tau \rangle$ and $\langle I , \tau \rangle$. We call a direct product of $2N-3$ nonabelian free groups \emph{untwisted} if every factor is untwisted, in which case there is a basis of $F_N$ such that the direct product is a subgroup of \eqref{eq:standard1}. It follows from Theorem \ref{t:direct-products-aut} that in every ample subgroup $\G$ of $\aut(F_N)$, the maximal (with respect to containment) untwisted direct products of $2N-3$ free groups are of the form:
\begin{equation}\label{eq:standard2} D=(L_1\cap \G) \times (R_1 \cap \G)
\times\dots\times(L_{N-2}\cap \G ) \times (R_{N-2} \cap \G )\times (I \cap \G), \end{equation} 
with inner factor $I(D)=I \cap \G$.

\subsection{Direct products under commensurations of ample subgroups}

We write $\mathcal{D}(\G)$ to denote the set of direct products of $2N-3$ nonabelian free groups in  a subgroup $\G < \aut(F_N)$. We partially order $\mathcal{D}(\G)$ by inclusion. Although this is not strictly needed in the work that follows, Corollary~8.2 of \cite{BW} tells us that every element of $\mathcal{D}(\G)$ is contained in a maximal element of $\mathcal{D}(\G)$.

\begin{lemma}\label{l:cent_fact}
Let $N \geq 3$. Suppose that $\G$ is an ample subgroup of $\aut(F_N)$ and let $D\in \mathcal{D}(\G)$. If $D_i < D$ is a direct factor that is twisted then there exists $D' \in \mathcal{D}(\G)$ that has $2N-4$ direct factors in common with $D$ and is such that the centralizer $C_\G(D')$   is nontrivial.
\end{lemma}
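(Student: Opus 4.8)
The plan is to use the classification in Theorem~\ref{t:direct-products-aut} together with the observation that a twisted direct factor $D_i$ must lie in one of the subgroups $\langle L_j, \tau\rangle$, $\langle R_j, \tau\rangle$ or $\langle I, \tau\rangle$ but not in the corresponding untwisted piece. The idea is: if $D$ already falls under the second case of Theorem~\ref{t:direct-products-aut} then we are essentially done, since $C_\G(D)$ itself may be nontrivial or we can pass to $D$ with $\langle\tau\rangle$ removed; otherwise, the presence of a twisted factor forces $D$ (up to conjugacy) into the third or fourth case, and we modify the single twisted factor (or the single factor $\langle \tau, L_1\rangle$, resp.\ $\langle I,\tau\rangle$, containing the twisting) to produce a new product $D'$ whose factors lie inside the $\tau$-centralizing pieces $L_j^\tau, R_j^\tau, I^\tau$, so that $\tau$ (a suitable power of which lies in $\G$ by ampleness) centralizes $D'$.

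**Key steps.** First I would fix, using Theorem~\ref{t:direct-products-aut}, a basis $\mathcal{B}=\{a_1,a_2,x_1,\dots,x_{N-2}\}$ so that a conjugate of $D$ lies in one of the four listed subgroups; replacing $\G$ and $D$ by their conjugates we may assume $D$ itself does. Since $D_i$ is twisted it is not contained in the corresponding untwisted group, so the first case is excluded and $D$ lies in the second, third, or fourth case. Second, in the second case all factors already commute with $\tau$, so writing $D'$ for the product of the $2N-3$ factors $L_j^\tau, R_j^\tau, I^\tau$ that are forced by the classification (i.e.\ $D$ itself minus the $\langle\tau\rangle$ factor if present, intersected with $\G$), I would note $\tau^m \in C_\G(D')$ for the power $m$ with $\tau^m\in\G$ guaranteed by ampleness, and that $D'$ shares at least $2N-4$ factors with $D$. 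Third, in the third case exactly one factor sits inside $\langle \tau, L_1\rangle$ and the remaining $2N-4$ sit inside $L_j^\tau$ ($j\ge 2$), $R_j^\tau$, $I^\tau$; I would replace the offending factor by $L_1^\tau\cap\G$ (nontrivial by ampleness, since $\G$ contains a power of the Nielsen automorphism $x_1\mapsto wx_1$ for suitable $w\in\langle a_1,a_2\rangle$ commuting with $\tau$), keeping the other $2N-4$ factors, obtaining $D'$ with all factors in the $\tau$-centralizing pieces and hence $\tau^m\in C_\G(D')$. The fourth case is identical with $\langle I,\tau\rangle$ and $I^\tau$ in place of $\langle \tau, L_1\rangle$ and $L_1^\tau$. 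Finally I would check that in every case $D'$ is indeed a direct product of $2N-3$ nonabelian free groups lying in $\G$: the nonabelian free condition on each modified factor follows because $L_1^\tau\cap\G$ (resp.\ $I^\tau\cap\G$) contains a power of an infinite-order element together with enough room to generate a nonabelian free group, using ampleness as in the construction of the standard products \eqref{eq:standard1}.

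**Main obstacle.** The delicate point is verifying that the modified factor one substitutes for the twisted factor is genuinely a \emph{nonabelian} free subgroup of $\G$, and that after the substitution the collection still forms an \emph{internal} direct product of $2N-3$ factors (the factors must still centralize one another). The centralization is automatic from the block structure of $L_j, R_j, I$ relative to the basis $\mathcal{B}$, so the real content is the nonabelianity: one must check that $L_1^\tau$, the subgroup of $x_1\mapsto wx_1$ with $w\in\langle a_1,a_2\rangle$ commuting with $\tau:a_1\mapsto a_1a_2$, is itself nonabelian free (its elements are parametrized by the centralizer of $a_2$-translation-type data in $\langle a_1,a_2\rangle$, which is a nonabelian free group), and then that intersecting with the ample subgroup $\G$ leaves a nonabelian free group — this is exactly the kind of statement controlled by ampleness, since $\G$ contains nontrivial powers of all the relevant Nielsen automorphisms. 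I expect this verification, rather than the case analysis, to be where the argument needs care, though it should follow the same lines as the analysis of the standard products in \cite{BW}.
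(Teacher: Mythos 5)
Your proposal is correct and follows essentially the same route as the paper: exclude the first case of Theorem~\ref{t:direct-products-aut}, conjugate so that all but at most one factor lies in a $\tau$-centralized block, use ampleness to get a power of $\tau$ in $\G$, and obtain $D'$ by swapping the exceptional factor for a nonabelian free subgroup of the corresponding $L_i^\tau$, $R_i^\tau$ or $I^\tau$ intersected with $\G$. Your added verification that these $\tau$-fixed blocks meet $\G$ in a nonabelian free group (via powers of the Nielsen automorphisms multiplying by $a_2$ and $a_1a_2a_1^{-1}$) is exactly the content the paper compresses into the phrase ``an appropriate copy.''
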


\begin{proof}
If $D_i$ is twisted, then we are not in the first case of Theorem~\ref{t:direct-products-aut}. Thus we can conjugate to ensure that all but at most one direct factor of $D$ is contained in a subgroup of $\langle L_i^\tau, \tau \rangle$, $\langle R_i^\tau, \tau \rangle$ or $\langle I^\tau, \tau \rangle$. As $\G$ is ample, some power of $\tau$ is contained in $\G$, so each factor
other than the possible exception has a  nontrivial centralizer in $\G \cap \langle \tau \rangle$, and
we obtain $D'$ by swapping in an appropriate copy of $L_i^\tau$, $R_i^\tau$, or $I^\tau$.
\end{proof}

Given the classification of centralizers in Theorem~\ref{t:direct-products-aut}, the following lemma is what
one intuitively expects. However, it requires a surprising amount of care to prove.

\begin{lemma}\label{l:cent_fact_2}
Let $N \geq 3$. Let $\G$ be an ample subgroup of $\aut(F_N)$ and let $D$ be a maximal, untwisted element of $\mathcal{D}(\G)$. If $D' \in \mathcal{D}(\G)$ has $2N-4$ direct factors in common with $D$ then the centralizer $C_{\aut(F_N)}(D')$ of $D'$ in $\aut(F_N)$ is trivial. \end{lemma}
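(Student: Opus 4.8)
The plan is to prove the sharper statement that $C_{\aut(F_N)}(D')$ is trivial: I take an arbitrary $\psi\in\aut(F_N)$ centralising $D'$ and show $\psi=1$. Index the factors of $D$ so that $D_1,\dots,D_{2N-4}$ are the ones shared with $D'$, let $F'$ denote the extra factor of $D'$, and set $E:=D_1\times\cdots\times D_{2N-4}$, so that $E$ is a common direct factor with $D=E\times D_{2N-3}$ and $D'=E\times F'$. By \eqref{eq:standard2} we may fix a basis $\{a_1,a_2,x_1,\dots,x_{N-2}\}$ with $D=(L_1\cap\G)\times(R_1\cap\G)\times\cdots\times(L_{N-2}\cap\G)\times(R_{N-2}\cap\G)\times(I\cap\G)$; write $A=\langle a_1,a_2\rangle$. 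Since $\psi$ centralises $D'=E\times F'$ it lies in $C_{\aut(F_N)}(E)$ and also centralises $F'$, so the whole argument reduces to computing $C_{\aut(F_N)}(E)$ and then exploiting $F'$.

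The heart of the matter is the identification
\[ C_{\aut(F_N)}(E)=\begin{cases} L_{i_0} & \text{if }D_{2N-3}=L_{i_0}\cap\G,\\ R_{i_0} & \text{if }D_{2N-3}=R_{i_0}\cap\G,\\ I & \text{if }D_{2N-3}=I\cap\G;\end{cases} \]
in every case this is a free group of rank $2$. Since $\G$ is ample and $D\in\mathcal{D}(\G)$, each retained factor $R_k\cap\G$, $L_k\cap\G$, $I\cap\G$ contains nontrivial powers of the relevant transvections by $a_1$ and by $a_2$ (for the left transvections one uses the Nielsen automorphisms of the bases $\{a_1,a_2,\dots,x_k^{-1},\dots\}$), and $I\cap\G$ contains $\ad_{a_1^m},\ad_{a_2^m}$ for some $m\ne 0$. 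Feeding an element $\psi\in C_{\aut(F_N)}(E)$ against these: the fact that the fixed subgroup of a transvection by a nontrivial element is the complementary free factor forces $\psi$ to preserve $A$ and each sub-basis free factor $\langle A,x_k\rangle$ involved; uniqueness of roots in a free group, together with the identity $\ad_{\psi(w)}=\psi\,\ad_w\,\psi^{-1}$ and centrelessness of $F_N$, controls $\psi$ on $A$; and matching the left transvection of each $x_k$ against the right transvection pins $\psi$ down to the stated subgroup. (The case $N=3$, where there is a single $x$-coordinate and the intersection of sub-basis free factors degenerates, has to be treated by hand; it is short.)

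With this in hand the proof finishes quickly. Indeed $F'$ is a direct factor of $D'\in\mathcal{D}(\G)$, hence a nonabelian free group, and it commutes with $E$, so $F'$ is a nonabelian (in particular non-cyclic) subgroup of the rank-$2$ free group $C_{\aut(F_N)}(E)$. But $\psi$ also lies in $C_{\aut(F_N)}(E)$ and commutes with every element of $F'$, and in a free group the centraliser of a non-cyclic subgroup is trivial. Hence $\psi=1$, so $C_{\aut(F_N)}(D')=1$. (Equivalently, the analysis shows that $D'$ is not conjugate into any of the twisted standard forms of Theorem~\ref{t:direct-products-aut}, so it falls into the first case and has trivial centraliser.)

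I expect the main obstacle to be the middle step: because $E$ has only $2N-4$ direct factors, $C_{\aut(F_N)}(E)$ is genuinely larger than $\{1\}$, so it must be pinned down by an explicit computation rather than read off from the classification, and one must be scrupulous both about which elements of $\G$ are actually supplied by ampleness — especially the left transvections — and about the low-rank degeneracy $N=3$. The final observation, that $C_{\aut(F_N)}(E)$ is always a rank-$2$ free group containing the nonabelian factor $F'$, is what makes the endgame clean once the computation is done.
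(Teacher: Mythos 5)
Your reduction and endgame are sound and genuinely different from the paper's argument: writing $D'=E\times F'$ with $E$ the product of the $2N-4$ shared factors, observing that $F'\le C_{\aut(F_N)}(E)$, and killing any $\psi\in C_{\aut(F_N)}(D')$ by the fact that a noncyclic subgroup of a free group has trivial centralizer is a clean finish (the paper instead splits into the case where the inner factor is shared, handled by ampleness plus Theorem~\ref{t:direct-products-aut}, and the case where the non-inner factors are shared, handled by letting the $2N-4$ non-inner factors act on Outer space and the Bass--Serre tree of the fixed rose, using Theorems A and B and Lemma~4.2 of \cite{BW} to show the basis putting $D'$ in standard form agrees with that of $D$ up to conjugation by $x_1$, followed by an explicit computation). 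However, there is a genuine gap: your entire proof rests on the identification $C_{\aut(F_N)}(E)\in\{L_{i_0},R_{i_0},I\}$, and this is asserted, not proved. It does not follow from Theorem~\ref{t:direct-products-aut}, which only classifies centralizers of products with the full complement of $2N-3$ factors, and it is of comparable depth to the lemma itself --- it is exactly the point at which the paper is forced into the Outer-space machinery. Moreover, the one concrete tool you invoke for it is false as stated: the fixed subgroup of a transvection $x_k\mapsto x_kw$ (with $1\neq w\in\langle a_1,a_2\rangle$, fixing the other basis elements) is not the complementary free factor; it also contains $x_kwx_k^{-1}$ and in fact has rank $N$. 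So preservation of $A$ and of the factors $\langle A,x_k\rangle$ cannot be read off from a single transvection; one must intersect the fixed subgroups of the left and right $a_1$- and $a_2$-transvections supplied by ampleness, which your sketch does not do.

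The configuration your sketch says least about is also the hardest one: when the omitted factor is the inner factor, $E$ contains no inner automorphisms, so you cannot start by pinning $\psi$ down on $A$ via $\ad_{\psi(p)}=\psi\,\ad_p\,\psi^{-1}$ and uniqueness of roots; this is precisely the paper's Case~2, where the tree argument is used to control the basis for $D'$. You also explicitly defer the degenerate case $N=3$. For what it is worth, spot checks (e.g. $N=3$ with the factor $R_1\cap\G$ omitted, where one can solve the commutation equations $\lambda_w(u)=wu$ directly) suggest your claimed identification of $C_{\aut(F_N)}(E)$ is correct, so the strategy looks completable and would give a more self-contained proof than the paper's; but as written the central step is missing and the justification offered for it is incorrect, so the argument does not yet establish the lemma.
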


\begin{proof} 
We split the proof into two cases:

\textbf{Case 1: $D$ and $D'$ have the same inner factor}

In this case the rank two free factor $A$ supporting the inner automorphisms of $D$ and $D'$ is the same. As $\G$ is ample, for every primitive element $p \in A$, the group $I(D)=I(D')$ contains some power of the inner automorphism $\ad_p$. In particular, viewed as a subgroup of $A$, the group $I(D')$ is not contained in the fixed subgroup of any Nielsen automorphism defined on $A$. It follows from Theorem~\ref{t:direct-products-aut} that the centralizer of $D'$ in $\aut(F_N)$ is trivial.

\textbf{Case 2: $D$ and $D'$ have the same non-inner factors}
Let $H$ be the direct product of the $2N-4$ non-inner factors of $D$ and $D'$. Then $H$ injects into $\out(F_N)$ under the map $\pi: \aut(F_N) \to \out(F_N)$. By Theorem~A of \cite{BW}, the group $\pi(H)$ fixes a unique rose with $N-2$ petals in the boundary of Outer space, so that $D$ and $D'$ both act on the Bass--Serre tree $T$ of this rose with unique global fixed points $v$ and $v'$ respectively (\cite[Theorem~B]{BW}). By Lemma~4.2 of \cite{BW}, the stabilizer of any arc in $T$ of length greater than or equal to two has product rank at most $2N-5$. As $H < D \cap D'$ has product rank $2N-4$, it follows that $v$ and $v'$ are either equal or separated by an edge $e$ in $T$. If $v=v'$, then the same basis of $F_N$ used to give a standard description of $D$ can be used for $D'$. As $\G$ is ample, the same argument as above shows that $H$ is not contained in a subgroup of the form $L_1^\tau \times \cdots \times L_{N-2}^\tau \times R_1^\tau \times \cdots \times R_{N-2}^\tau \times I^\tau \times \langle \tau \rangle $ for any Nielsen automorphism $\tau$, so the centralizer of $D'$ in $\aut(F_N)$ is trivial. If $v'$ is adjacent to $v$ then without loss of generality we can assume that $v'=x_1v$ and the basis used to represent $D'$ in its standard form is  $\{x_1a_1x_1^{-1},x_1a_2x_1^{-1}, x_1,x_1x_2x_1^{-1},\ldots,x_1x_{N-2}x_1^{-1}\}$. Elements of $R_2 \cap \Gamma$ in $D$ send $x_2 \mapsto x_2w$, with $w \in \langle a_1, a_2 \rangle$, fixing the other elements of $\{ a_1, a_2, x_1, \ldots, x_{N-2}\}$. With respect to the basis for $D'$, such an element of $R_2 \cap \Gamma$ sends $x_1x_2x_1^{-1} \mapsto x_1x_2x_1^{-1}\cdot x_1wx_1^{-1}$, fixing the other elements of the  basis. As $\G$ is ample, $R_2 \cap \Gamma$ cannot commute with any Nielsen automorphism defined on $x_1 A x_1^{-1}$, so again the centralizer of $D'$ in $\aut(F_N)$ is trivial.
\end{proof}

\begin{proposition}\label{p:transfer}
Let $N \geq 3$. If $f:\G_1 \to \G_2$ is an isomorphism between ample subgroups of $\aut(F_N)$ and $D \in \mathcal{D}(\G_1)$ is untwisted and maximal in $\mathcal{D}(\G_1)$, then $f(D)$ is untwisted and maximal in $\mathcal{D}(\G_2)$.
\end{proposition}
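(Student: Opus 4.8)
The plan is to route everything through the poset structure of $\mathcal{D}$ and the two centralizer lemmas just proved. The guiding principle is that an abstract isomorphism automatically respects every notion in play \emph{except} untwistedness, which is not an intrinsic group-theoretic property of $D$ and so must be detected indirectly, through centralizers.

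First I would extract the purely formal consequences of $f$ being an isomorphism. Being a direct product of $2N-3$ nonabelian free groups is an isomorphism invariant of a subgroup, so $f$ restricts to a bijection $\mathcal{D}(\G_1)\to\mathcal{D}(\G_2)$ that preserves inclusion and is therefore an isomorphism of partially ordered sets; in particular $f(D)$ is maximal in $\mathcal{D}(\G_2)$, since a strict overgroup $f(D)\subsetneq E\in\mathcal{D}(\G_2)$ would pull back to a strict overgroup $D\subsetneq f^{-1}(E)\in\mathcal{D}(\G_1)$. Next, $f$ sends a direct-product decomposition of any member of $\mathcal{D}(\G_1)$ to a decomposition of its image, so it transports the relation ``has $2N-4$ direct factors in common with'' in both directions, using $f^{-1}(f(D))=D$ (such notions being well-defined thanks to the uniqueness, up to order, of the decomposition of a finite product of nonabelian free groups into directly indecomposable factors). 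Finally, $C_{\G_2}(f(D'))=f\bigl(C_{\G_1}(D')\bigr)$ for all $D'\in\mathcal{D}(\G_1)$, so $f$ also preserves the property of having nontrivial centralizer inside the ambient ample subgroup.

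To finish, I would show that $f(D)$ is untwisted by contradiction. Suppose $f(D)$ has a twisted direct factor. Since $\G_2$ is ample, Lemma~\ref{l:cent_fact} applied to $f(D)$ produces $E'\in\mathcal{D}(\G_2)$ sharing $2N-4$ direct factors with $f(D)$ and with $C_{\G_2}(E')\neq 1$. Put $D':=f^{-1}(E')\in\mathcal{D}(\G_1)$; by the previous paragraph $D'$ shares $2N-4$ direct factors with $D$ and $C_{\G_1}(D')=f^{-1}\bigl(C_{\G_2}(E')\bigr)\neq 1$. But $D$ is maximal and untwisted in $\mathcal{D}(\G_1)$ and $\G_1$ is ample, so Lemma~\ref{l:cent_fact_2} forces $C_{\aut(F_N)}(D')=1$; as $C_{\G_1}(D')\le C_{\aut(F_N)}(D')$, this contradicts $C_{\G_1}(D')\neq 1$. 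Hence $f(D)$ has no twisted factor, i.e.\ $f(D)$ is untwisted, and combined with the maximality shown above this proves the proposition.

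I do not expect a genuine difficulty inside this argument itself --- all the substance has been packed into Lemmas~\ref{l:cent_fact} and~\ref{l:cent_fact_2}. The conceptual point worth flagging is the dichotomy those lemmas set up: a twisted direct factor forces a neighbouring product in $\mathcal{D}(\G)$ with nontrivial centralizer \emph{in $\G$}, whereas a maximal untwisted $D$ admits no neighbouring product with nontrivial centralizer \emph{even in all of} $\aut(F_N)$. This asymmetry is exactly what makes untwistedness visible to the isomorphism $f$; the delicate part of it is Case~2 of Lemma~\ref{l:cent_fact_2}, where the two global fixed points in the relevant Bass--Serre tree are separated by a single edge and one has to compare the two resulting standard forms explicitly.
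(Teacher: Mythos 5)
Your proposal is correct and follows essentially the same route as the paper: maximality is transferred via the inclusion-preserving bijection $\mathcal{D}(\G_1)\to\mathcal{D}(\G_2)$, and untwistedness is detected through the dichotomy between Lemma~\ref{l:cent_fact} and Lemma~\ref{l:cent_fact_2}, exactly as in the paper's proof (you merely phrase the contradiction by pulling $E'$ back along $f^{-1}$ rather than pushing the centralizer property of $D$ forward).
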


\begin{proof}
As $f$ is an isomorphism, it induces a containment-preserving map from $\mathcal{D}(\G_1)$ to $\mathcal{D}(\G_2)$, so if $D$ is maximal in $\mathcal{D}(\G_1)$ then $f(D)$ is maximal in $\mathcal{D}(\G_2)$. Furthermore, as $D$ is maximal and untwisted, if $D' \in \mathcal{D}(\G_1)$ is obtained from $D$ by swapping out one factor then $C_{\G_1}(D')$ is trivial (Lemma~\ref{l:cent_fact_2}). The same centraliser behaviour is true of $f(D)$,
so $f(D)$ must untwisted, otherwise we would obtain a contradiction to Lemma~\ref{l:cent_fact}.
\end{proof}

\subsection{Powers of Nielsen automorphisms in ample subgroups}

Powers of Nielsen automorphisms in ample subgroups of $\aut(F_N)$ can be characterized as follows.

\begin{proposition}
An automorphism $\phi$ in an ample subgroup $\G < \aut(F_N)$ is a power of a Nielsen automorphism if and only if $\phi$ centralizes a direct product of $2N-3$ nonabelian free groups.
\end{proposition}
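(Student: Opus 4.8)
The plan is to prove both implications, reading ``a direct product of $2N-3$ nonabelian free groups'' in the statement as an element of $\mathcal{D}(\G)$ --- that is, such a product contained in $\G$ (this is the form of the statement that gets used later, and the only form in which ampleness plays a role). The implication that $\phi$ is a power of a Nielsen automorphism whenever it centralizes such a product is immediate from the classification: if $\phi$ centralizes $D\in\mathcal{D}(\G)$, then $D$ is in particular a direct product of $2N-3$ nonabelian free groups inside $\aut(F_N)$, so Theorem~\ref{t:direct-products-aut} tells us that the centralizer $C_{\aut(F_N)}(D)$ is either trivial or infinite cyclic generated by a Nielsen automorphism; since $\phi\in C_{\aut(F_N)}(D)$, it follows that $\phi$ is a power of a Nielsen automorphism (the zeroth power, namely the identity, in the degenerate case).

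For the converse, write $\phi=\tau^m$, where $\tau$ is the Nielsen automorphism $a_1\mapsto a_1a_2$ for some basis $\{a_1,a_2,x_1,\dots,x_{N-2}\}$ of $F_N$, and keep the notation $L_i,R_i,I$ and $L_i^\tau,R_i^\tau,I^\tau$ from Section~\ref{s:direct_products}. The key observation is that $a_2$ and $c:=a_1a_2a_1^{-1}$ are two \emph{non-commuting} primitive elements of $F_N$ (the second a conjugate of the first), both \emph{fixed} by $\tau$, and both lying in $\langle a_1,a_2\rangle$. Hence, for every $i$, the four maps $x_i\mapsto a_2x_i$, $x_i\mapsto cx_i$, $x_i\mapsto x_ia_2$, $x_i\mapsto x_ic$ (each fixing the remaining basis elements) are Nielsen automorphisms --- each has the form $y_1\mapsto y_1y_2$ with respect to a suitable basis of $F_N$ containing $x_i$ --- and they commute with $\tau$ because $a_2$ and $c$ are $\tau$-fixed, so the first two lie in $L_i^\tau$ and the last two in $R_i^\tau$; likewise $\ad_{a_2}$ and $\ad_c$ lie in $I^\tau$ and are inner automorphisms by primitive elements. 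As $\G$ is ample, it contains a nontrivial power of each of these automorphisms. For each $i$, the chosen powers of $x_i\mapsto a_2x_i$ and $x_i\mapsto cx_i$ generate a nonabelian free group: they lie in the free group $L_i$, they do not commute (the automorphisms $x_i\mapsto a_2^kx_i$ and $x_i\mapsto c^lx_i$ commute precisely when $a_2^k$ and $c^l$ commute in $F_N$, and they do not, since $a_2$ and $c$ do not), and a two-generated subgroup of a free group is free of rank at most $2$, hence nonabelian once its generators fail to commute. The same recipe inside $R_i^\tau\cap\G$ and inside $I^\tau\cap\G$ produces nonabelian free subgroups there. Let $D$ be the internal product of these $2N-3$ subgroups. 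Since $L_1\times R_1\times\dots\times L_{N-2}\times R_{N-2}\times I$ is a direct product in $\aut(F_N)$, so is $D$, and hence $D\in\mathcal{D}(\G)$. Finally, every generator of $D$ lies in one of $L_i^\tau$, $R_i^\tau$, $I^\tau$, and so commutes with $\tau$; therefore $\tau$, and hence $\phi=\tau^m$, centralizes $D$.

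The substantive step, and the main obstacle, is this converse direction. Ampleness supplies only powers of Nielsen automorphisms and powers of inner automorphisms by primitive elements, so a priori $\G$ need contain none of the standard twisted direct products appearing in Theorem~\ref{t:direct-products-aut}; the role of the $\tau$-fixed primitives $a_2$ and $a_1a_2a_1^{-1}$ is exactly to reconstruct such a product out of pieces whose powers ampleness does provide. The remaining points --- that $x_i\mapsto a_2x_i$, $x_i\mapsto cx_i$ and their right-hand analogues are genuinely Nielsen automorphisms, that the subgroups constructed are nonabelian free, and that the $2N-3$ of them form a direct product --- are routine given the explicit description of $L_i,R_i,I$ recalled in Section~\ref{s:direct_products}.
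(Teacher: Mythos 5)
Your proof is correct and follows essentially the same route as the paper: the forward direction is the same appeal to the centralizer classification in Theorem~\ref{t:direct-products-aut}, and the converse is the paper's observation that $\tau$ centralizes $(L_1^\tau\cap\G)\times\cdots\times(R_{N-2}^\tau\cap\G)\times(I^\tau\cap\G)$, with your explicit generators (powers of $x_i\mapsto a_2x_i$, $x_i\mapsto a_1a_2a_1^{-1}x_i$, their right-hand analogues, and the corresponding inner automorphisms) simply spelling out why ampleness makes those factors nonabelian free, a point the paper asserts without detail.
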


\begin{proof}
By Theorem~\ref{t:direct-products-aut}, the only elements of $\aut(F_N)$ that centralize a direct product of $2N-3$ nonabelian free groups are powers of Nielsen automorphisms. Conversely, using the notation of the previous section, if  $\tau$ is the Nielsen automorphism  $a_1 \mapsto a_1a_2$ fixing $a_2, x_1, \ldots, x_{N-2}$, then $\tau$ centralizes  \[ (L_1^\tau \cap \G) \times \cdots \times (L_{N-2}^\tau \cap \G) \times (R_1^\tau \cap \G) \times \cdots \times (R_{N-2}^\tau \cap \G) \times (I^\tau \cap \G).  \] If $\G$ is ample then all of the above factors are nonabelian free groups.
\end{proof}

\begin{remark}\label{c:durr!} We began this section by describing the prototypical 
untwisted direct product $D<\autN$; it was displayed as \eqref{eq:standard1}.
The inner factor $I(D)$ is the unique direct factor of $D$ that does not contain a power of any Nielsen transformation. As an ample subgroup $\G$ contains a power of every Nielsen transformation, this property characterises
the maximal untwisted elements of $\mathcal{D}(\G)$.
\end{remark}

\begin{proposition}[Constructing an action on $\mathcal{F}_{(2)}$.] \label{p:factor_map}
Let $N \geq 3$. We identify $F_N$ with $\innn$. If $A < \innn$ is  a free factor of rank 1 or 2 and $f:\G_1 \to \G_2$ is an isomorphism between ample subgroups of $\autn$, then $f(A \cap \G_1)$ is a subgroup of $\innn$ contained in a unique minimal free factor $f_*(A)$ with the same rank as $A$. Furthermore:
\begin{itemize}
 \item If $f': \G_1' \to \G_2'$ is an isomorphism that agrees with $f$ on a common finite-index subgroup of their domains, then $f_*(A)=f'_*(A)$.
 \item If $g:\G_2 \to \G_3$ is a second isomorphism between ample subgroups, then $(gf)_*(A)=g_*(f_*(A))$.
 \end{itemize}
 \end{proposition}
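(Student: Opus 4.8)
The plan is to prove the three assertions in turn, using the structure theory from Section~\ref{s:fat_action} to transfer the relevant direct products and their inner factors under $f$. First I would handle the rank 2 case. Given a rank 2 free factor $A < \innn$, choose an untwisted, maximal $D \in \mathcal{D}(\G_1)$ whose inner factor $I(D) \cap \G_1$ is contained in $A$ (such a $D$ exists because $\G_1$ is ample: we can build the prototype \eqref{eq:standard1} relative to a basis $\{a_1, a_2, x_1, \ldots, x_{N-2}\}$ with $\langle a_1, a_2\rangle = A$, intersect each factor with $\G_1$, and pass to a maximal element above it, which stays untwisted by Remark~\ref{c:durr!}). By Proposition~\ref{p:transfer}, $f(D)$ is untwisted and maximal in $\mathcal{D}(\G_2)$, so by Remark~\ref{c:durr!} its inner factor $I(f(D)) = f(I(D) \cap \G_1)$ is the unique direct factor of $f(D)$ containing no power of a Nielsen automorphism. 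This inner factor is supported on a rank 2 free factor $A_2 < \innn$, and I would define $f_*(A) := A_2$. Minimality: if $A'$ were a smaller free factor containing $f(A \cap \G_1) \supseteq f(I(D)\cap\G_1)$, then since $I(D)\cap\G_1$ contains a power of $\ad_p$ for every primitive $p \in A$ and $f$ is a homomorphism, $f(A \cap \G_1)$ already generates a finite-index subgroup's worth of inner automorphisms of $A_2$; the subgroup of $\innn$ they generate has free factor support exactly $A_2$ (a subgroup of $\innn \cong F_N$ with two primitive generators that don't lie in a common rank 1 factor is not contained in any proper free factor of $A_2$), so $A_2$ is minimal and the rank is preserved. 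For a rank 1 free factor $A = \langle \ad_a \rangle$, I would realize $\langle a \rangle$ as an intersection $A = A^{(1)} \cap A^{(2)}$ of two rank 2 free factors (e.g. $\langle a, b\rangle \cap \langle a, c\rangle$ for suitable primitive $b, c$), note that $f(A\cap\G_1) \subseteq f_*(A^{(1)}) \cap f_*(A^{(2)})$, and argue this intersection is a rank 1 free factor of $\innn$; one checks the two rank 2 factors on the right are distinct and meet in a rank 1 factor (not rank 0) precisely because their preimages do and $f_*$ is injective on this configuration — this needs that the incidence pattern "two rank 2 factors whose intersection is rank 1" is detected correctly, which follows since $f(A\cap\G_1)$ is a nontrivial subgroup sitting inside both. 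Then set $f_*(A)$ to be that rank 1 factor.

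\textbf{First bullet (well-definedness under the commensuration equivalence relation).} Suppose $f$ and $f'$ agree on a finite-index subgroup $\G_0 \le \G_1 \cap \G_1'$. The subgroup $\G_0$ is again ample (finite-index subgroups are ample by hypothesis), so $f|_{\G_0}$ and $f'|_{\G_0}$ are isomorphisms between ample subgroups, and by construction $f_*(A)$ and $f'_*(A)$ depend only on the image $f(A \cap \G_0) = f'(A \cap \G_0)$: indeed $A \cap \G_0$ has finite index in $A \cap \G_1$, so it still contains a power of $\ad_p$ for every primitive $p \in A$ when $A$ has rank 2 (powers survive passing to finite index), and still generates a finite-index subgroup of $A \cap \G_1$ when $A$ has rank 1. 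Hence the minimal free factor of $\innn$ containing $f(A\cap\G_0)$ equals the one containing $f(A\cap\G_1)$, giving $f_*(A) = (f|_{\G_0})_*(A) = (f'|_{\G_0})_*(A) = f'_*(A)$. Here I would want a small lemma: the minimal free factor containing a subgroup $H$ equals the minimal free factor containing any finite-index subgroup of $H$ — this is standard (free factor support is unchanged under passing to finite-index subgroups, since a finite-index subgroup of $H$ that lay in a proper free factor of the support would force $H$ into it too, by considering the Kurosh/Grushko decomposition or the action on the relevant tree).

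\textbf{Second bullet (functoriality).} For $g : \G_2 \to \G_3$, both sides are computed from inner factors of transferred direct products: $(gf)_*(A)$ comes from the inner factor of $(gf)(D) = g(f(D))$ for a suitable $D \in \mathcal{D}(\G_1)$, while $g_*(f_*(A))$ is obtained by first forming $f_*(A)$ from $I(f(D))$ and then applying $g$ to a direct product in $\G_2$ with inner factor inside $f_*(A)$. The point is that $f(D) \in \mathcal{D}(\G_2)$ is itself such a direct product — it is untwisted and maximal with inner factor supported on $f_*(A)$ — so $g(f(D))$ has inner factor supported on $g_*(f_*(A))$, and this is exactly the factor used to define $(gf)_*(A)$. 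So the two agree in the rank 2 case; the rank 1 case follows by applying this to the two rank 2 factors $A^{(1)}, A^{(2)}$ with $A = A^{(1)} \cap A^{(2)}$ and using $g_*(f_*(A^{(i)})) = (gf)_*(A^{(i)})$ together with the fact that $g_*$ commutes with intersections of the relevant rank 2 factors (both sides equal the unique minimal free factor containing $(gf)(A\cap\G_1)$).

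\textbf{Main obstacle.} I expect the delicate point to be the \emph{uniqueness and minimality} of $f_*(A)$ in the rank 1 case — specifically, verifying that when we write $A$ as an intersection of two rank 2 factors, the images $f_*(A^{(1)})$ and $f_*(A^{(2)})$ are genuinely distinct rank 2 factors meeting in a rank 1 (rather than rank 0) factor, and that this rank 1 factor doesn't depend on the choice of presentation $A = A^{(1)} \cap A^{(2)}$. The cleanest route is probably to observe directly that $f(A \cap \G_1)$ is a nontrivial (in fact infinite cyclic) subgroup of $\innn$ — since $A \cap \G_1$ is nontrivial, being ample-related — hence has a well-defined free factor support of rank $1$, and then show that support equals $f_*(A^{(i)}) \cap f_*(A^{(j)})$ for every valid choice. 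The rank 2 minimality argument also deserves care: one must know that the subgroup of $\innn \cong F_N$ generated by $\{\,(\ad_p)^{n_p} : p \text{ primitive in } A\,\}$ has free factor support exactly $A$, which follows because it contains (powers of) two primitive elements generating a rank 2 free factor, and such a pair is not supported on any proper (i.e. rank $\le 1$) free factor.
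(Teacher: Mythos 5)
Your proposal follows essentially the same route as the paper: define $f_*(A)$ for rank 2 factors as the unique rank-2 support of $f(A\cap\G_1)$, obtained by transferring a maximal untwisted $D\in\mathcal{D}(\G_1)$ with inner factor $A\cap\G_1$ via Proposition~\ref{p:transfer} and identifying inner factors via Remark~\ref{c:durr!}, then handle rank 1 factors by intersecting rank 2 factors (using that ample subgroups contain powers of primitive inner automorphisms), with both bullet points reduced to uniqueness of the rank-2 free factor supporting a nonabelian subgroup. The extra care you flag for the rank 1 case (distinctness of the two image factors, via applying the construction to $f^{-1}$) is a correct filling-in of a step the paper leaves implicit, so the argument is sound and matches the paper's proof.
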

 
\begin{proof} We prove these statements for rank 2 factors. The result for rank 1 factors follows by taking intersections, along with the fact that each ample subgroup contains a power of every primitive inner automorphism. If a nonabelian subgroup of $F_N=\innn$ is contained in a rank 2 free factor, then this free factor is unique, as intersections of free factors are also free factors (and therefore cyclic for distinct rank 2 factors). 

Let $A$ be a rank 2 free factor and let $D \in \mathcal{D}(\G)$ be a maximal untwisted subgroup such that $I(D)=A\cap\G$ (as in Equation~\eqref{eq:standard2}). Then $f(D)$ is maximal and untwisted in $\G_2$ by Proposition~\ref{p:transfer}.  Furthermore, the inner factor $I(D)$ of $D$ is sent to the inner factor $I(f(D))$ of $f(D)$ under $f$, by Remark~\ref{c:durr!} (all other factors of $D$ and $f(D)$ contain powers of Nielsen transformations, whereas $I(D)$ and $I(f(D))$ do not). Hence $f(I(D))=f(A\cap \G_1)$ is a nonabelian subgroup of a rank 2 free factor , which we define to be $f_*(A)$.

If $f'$ satisfies the conditions of the first bullet point then $f'(\G_1'\cap A)$ and $f(\G_1 \cap A)$ are contained in rank 2 free factors of $\innn$, namely $f'_*(A)$ and $f_*(A)$, and the intersection of these factors has finite index in each. As the intersection of distinct rank 2 free factors in $F_N$ is  cyclic, we conclude that $f_*(A)=f'_*(A)$.

Suppose that $g:\G_2 \to \G_3$ is a second isomorphism between ample subgroups. Then $(gf)_*(A)$ is the unique rank two free factor containing \[ g(f_*(A) \cap \G_2)=gf(A \cap \Gamma_1) \] and therefore $(gf)_*(A)=g_*(f_*(A))$.
\end{proof}
 
We now have enough tools in hand to construct the action of $\Comm(\G)$ that we need for Theorem \ref{t:comm_f2}.

\begin{proposition} \label{p:action}
Let $\G$ be an ample subgroup of $\aut(F_N)$ and assume $N\ge 3$. Given an isomorphism $f:\G_1 \to \G_2$ between two finite index subgroups of $\G$, the mapping $A \mapsto f_*(A)$ induces an action $\Phi: \Comm(\G) \to \aut(\F_{(2)})$ such that the diagram
\[ \xymatrix{\Comm_{\aut(F_N)}(\G) \ar@/^2pc/[rr]^{\Psi_0} \ar[r]^-{\text{{\rm{ad}}}} & \Comm(\G) \ar[r]^\Phi & \aut(\F_{(2)}) } \]
commutes, where $\Psi_0$ is the restriction of the natural action of $\aut(F_N)$ on $\F_{(2)}$.
\end{proposition}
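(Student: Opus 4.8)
The plan is to extract everything formally from Proposition~\ref{p:factor_map}, which already produces the assignment $A\mapsto f_*(A)$ on rank-$1$ and rank-$2$ free factors, records that it preserves rank, that it depends only on the commensuration class of $f$, and that it is functorial. First I would fix an isomorphism $f\co\G_1\to\G_2$ between finite-index subgroups of $\G$ and check that $f_*$ defines an automorphism of the graph $\F_{(2)}$. Since a finite-index subgroup of an ample subgroup is again ample (ampleness only demands \emph{some} power of each Nielsen automorphism, and that property passes to finite-index subgroups), $f^{-1}\co\G_2\to\G_1$ is also an isomorphism between ample subgroups, so $(f^{-1})_*$ is defined. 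The functoriality statement of Proposition~\ref{p:factor_map} gives $(f^{-1})_*\circ f_*=(f^{-1}f)_*=(\mathrm{id}_{\G_1})_*$, and $(\mathrm{id}_{\G_1})_*(A)=A$ for every vertex $A$: indeed $A\cap\G_1$ contains a nontrivial power of $\ad_p$ for each primitive $p\in A$, so it is an infinite cyclic subgroup generating $A$ when $A$ has rank $1$, and a nonabelian subgroup of the rank-$2$ factor $A$ otherwise, and in each case $A$ is the unique minimal free factor containing it. Hence $f_*$ is a bijection with inverse $(f^{-1})_*$. For adjacency, if $\langle u\rangle<B$ are vertices of ranks $1$ and $2$, then $f(\langle u\rangle\cap\G_1)\le f(B\cap\G_1)\le f_*(B)$, so the minimal free factor $f_*(\langle u\rangle)$ is contained in $f_*(B)$; running the same argument with $f^{-1}$ shows $f_*$ reflects adjacency as well, so it is a graph automorphism.

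Next I would set $\Phi([f]):=f_*$. This is well defined because $f_*$ depends only on $[f]\in\Comm(\G)$ by the first bullet of Proposition~\ref{p:factor_map}, and that same bullet also shows that restricting a representative to a smaller finite-index subdomain---as one must do in order to compose commensurations---does not change the induced map. The functoriality bullet then reads $(gf)_*=g_*\circ f_*$, and $(\mathrm{id})_*=\mathrm{id}$, so $\Phi\co\Comm(\G)\to\aut(\F_{(2)})$ is a homomorphism.

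Finally, for commutativity of the triangle, I would fix $\gamma\in\Comm_{\aut(F_N)}(\G)$. Then $\ad(\gamma)$ is the class of the conjugation isomorphism $c_\gamma\co\G\cap\gamma^{-1}\G\gamma\to\G\cap\gamma\G\gamma^{-1}$, $x\mapsto\gamma x\gamma^{-1}$, whose domain and range are finite-index in $\G$ by the very definition of the relative commensurator. Under the identification $F_N=\innn$, conjugation by $\gamma$ on $\innn$ is the action of $\gamma$ on $F_N$, so $c_\gamma(A\cap\G_1)=\gamma(A\cap\G_1)$ as subgroups of $\innn$, where $\G_1=\G\cap\gamma^{-1}\G\gamma$. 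Because $\gamma$ permutes free factors and preserves containment, it carries the minimal free factor containing a subgroup to the minimal free factor containing its image; as the minimal free factor containing $A\cap\G_1$ is $A$ (the same observation as above), we get $(c_\gamma)_*(A)=\gamma(A)=\Psi_0(\gamma)(A)$, which establishes the commutativity of the triangle.

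The whole argument is bookkeeping built on Proposition~\ref{p:factor_map}; I expect the only point needing attention to be the tracking of domains---checking that $\G_1=\G\cap\gamma^{-1}\G\gamma$, and the finite-index subdomains used when composing commensurations, remain ample so that Proposition~\ref{p:factor_map} applies. Since ampleness survives passage to finite-index subgroups this is immediate, so I anticipate no genuine obstacle.
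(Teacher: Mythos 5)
Your proposal is correct and follows essentially the same route as the paper: everything is extracted from Proposition~\ref{p:factor_map} (well-definedness on commensuration classes, functoriality giving that $f_*$ is a containment-preserving bijection with inverse $(f^{-1})_*$, hence an automorphism of $\F_{(2)}$ and $\Phi$ a homomorphism), and the triangle commutes because conjugation by $\phi\in\Comm_{\aut(F_N)}(\G)$ sends $\ad_g$ to $\ad_{\phi(g)}$, so $(\ad_\phi)_*(A)=\phi(A)$. Your extra checks (ampleness passing to finite-index subgroups, $(\mathrm{id})_*=\mathrm{id}$, adjacency via minimal free factors) are exactly the routine verifications the paper leaves implicit.
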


\begin{proof}
Given $[f] \in \Comm(\G)$, Proposition~\ref{p:factor_map} constructs a map $f_*$ on the set of rank 1 and 2 free factors that is independent of the chosen representative $f \in [f]$. By construction $f_*$ preserves containment so is a graph (poset) map. The second bullet point in Proposition \ref{p:factor_map} assures us that $f_*$ is a bijection with inverse $f^{-1}_*$,
and that $[f]\mapsto f_*$ is a homomorphism. Thus we have an action  $\Phi: \Comm(\G) \to \aut(\F_{(2)})$.

 Let $\phi \in \Comm_{\aut(F_N)}(\G)$. Note that if $g \in F_N$ then \[ \phi \cdot \ad_g \cdot \phi^{-1}= \ad_{\phi(g)} \] which implies that \[(\ad_\phi)_*(A)=\phi(A) \] for any subgroup $A < F_N=\innn$.
 This implies that the above diagram commutes, or equivalently that action of $\Comm_{\aut(F_N)}(\G)$ on $\F_{(2)}$ extends to a rank-preserving action of $\Comm(\G)$.
\end{proof}

\noindent{\bf{End of the Proof of Theorem \ref{t:n2}.}} In Section 3 we proved that automorphisms of
$\F_{(2)}$ extend uniquely to $\FA$, giving an isomorphism $\aut(\F_{(2)})\cong \aut(\FA)$.
From \cite{BB} we know that the natural action of $\aut(F_N)$ defines an isomorphism $\Psi: \aut(F_N)\to \aut(\FA)$,
and in Proposition \ref{p:action} we saw that the restriction of $\Psi$ to the relative commensurator of any ample
subgroup $\G$ can be extended to an action of $\Comm(\G)$ on $\F_{(2)}$. With these results in hand,
the following standard proposition completes the proof of Theorem~\ref{t:n2}, taking into account centralizers of (finite-index subgroups of) ample subgroups in $\aut(F_N)$ are trivial (Lemma~\ref{l:centralizers}) \qed


\begin{proposition}\label{l:Ivanov} Suppose that $\G$ acts on a graph $X$  faithfully. If the action of $\G$ extends to an action $\Phi \co \Comm(\G) \to \Aut(X)$ of $\Comm(\G)$ via   $\ad \co \G \to \Comm(\G)$, then \begin{itemize}
		\item The map $\ad \co \G \to \Comm(\G)$ is injective.
		\item $\Phi$ is injective and its image is contained in $\Comm_{\Aut(X)}(\G)$.
	\end{itemize}
Furthermore, if the centralizer $C_{\aut(X)}(\G_1)$ is trivial for every finite-index subgroup $\G_1 < \G$, then $\ad$ extends to an isomorphism $$\ad \co \Comm_{\aut(X)}(\G) \to \Comm(\G).$$
\end{proposition}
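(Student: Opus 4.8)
The plan is to reduce everything to one structural identity relating $\ad$, $\Phi$ and conjugation: for a commensuration $f\co\G_1\to\G_2$ and any $g$ in a suitable finite-index subgroup of $\G_1$, one has $[f]\,\ad(g)\,[f]^{-1}=\ad(f(g))$ in $\Comm(\G)$. Applying $\Phi$ and using that $\Phi\circ\ad$ \emph{is} the given faithful action of $\G$ on $X$ (identify $\G$ with its image in $\Aut(X)$), this becomes the working identity $\Phi([f])\,g\,\Phi([f])^{-1}=f(g)$ in $\Aut(X)$. Essentially all three assertions are then formal consequences.

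First I would dispose of the two bullet points. For injectivity of $\ad\co\G\to\Comm(\G)$: if $\ad(g)=1$ then $1=\Phi(\ad(g))$ equals $g$ acting on $X$, and faithfulness forces $g=1$. For injectivity of $\Phi$: if $\Phi([f])=1$, the working identity gives $g=f(g)$ for all $g\in\G_1$, so $f$ is the identity on $\G_1$ and $[f]=1$. For the claim that the image of $\Phi$ lies in $\Comm_{\Aut(X)}(\G)$: writing $\psi=\Phi([f])$ for $f\co\G_1\to\G_2$, the identity shows $\psi\G_1\psi^{-1}=f(\G_1)=\G_2$ and $\psi^{-1}\G_2\psi=\G_1$; since $\G_1$ and $\G_2$ have finite index in $\G$, both $\G\cap\psi\G\psi^{-1}$ and $\G\cap\psi^{-1}\G\psi$ do too.

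For the final statement, introduce the conjugation homomorphism $\Theta\co\Comm_{\Aut(X)}(\G)\to\Comm(\G)$ sending $\psi$ to the class of the isomorphism $\G\cap\psi^{-1}\G\psi\to\G\cap\psi\G\psi^{-1}$ given by conjugation by $\psi$; restricted to $\G<\Comm_{\Aut(X)}(\G)$ this is exactly $\ad$, so $\Theta$ is the extension we are after. I claim $\Theta$ is inverse to $\Phi$. On one side, for $[f]\in\Comm(\G)$ with $f\co\G_1\to\G_2$, the working identity says precisely that $\Theta(\Phi([f]))$ is represented by $f$ on $\G_1$, so $\Theta\circ\Phi=\mathrm{id}_{\Comm(\G)}$; in particular $\Theta$ is surjective. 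On the other side, if $\Theta(\psi)=1$ then $\psi$ centralizes some finite-index $\G_1<\G$, so $\psi\in C_{\Aut(X)}(\G_1)=\{1\}$ by hypothesis, and $\Theta$ is injective. A homomorphism that is injective and admits a right inverse is an isomorphism whose inverse is that right inverse, so $\Theta$ is an isomorphism and $\Phi=\Theta^{-1}$.

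The only point needing genuine care --- though it is routine bookkeeping --- is the initial identity $[f]\,\ad(g)\,[f]^{-1}=\ad(f(g))$: one must pass to the finite-index subgroup of $\G_1$ on which $f$, $f^{-1}$ and conjugation by $g$ are all defined with image back in the relevant domain, and verify that there $f\circ c_g\circ f^{-1}$ coincides with $c_{f(g)}$, using only that $f$ is a homomorphism on its domain. Once that is in place, the rest of the argument is purely formal diagram-chasing.
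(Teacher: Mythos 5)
Your proposal is correct and follows essentially the same route as the paper: the central conjugation identity $[f]\,\ad(g)\,[f]^{-1}=\ad(f(g))$, pushed through $\Phi$, yields both bullet points, and the final statement comes from checking that the natural extension of $\ad$ to $\Comm_{\Aut(X)}(\G)$ (your $\Theta$) satisfies $\Theta\circ\Phi=\mathrm{id}$ and is injective by the centralizer hypothesis. Your write-up is only slightly more explicit than the paper's (e.g.\ in verifying $\psi\G_1\psi^{-1}=\G_2$ and in spelling out why $\Theta$ is injective), but there is no substantive difference in method.
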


\begin{proof}
The injectivity of $\ad$ follows from the fact that the action of $\G$ on $X$ is faithful and $\Phi\circ \ad $. Suppose $f \co \G_1 \to \G_2$ is an isomorphism between two finite index subgroups of $\G$ representing $[f] \in \Comm(\G)$. If $g \in \G_1$ then one can verify that \[ [f]\cdot[\ad_g]\cdot[f^{-1}]=[\ad_{f(g)}] \] in $\Comm(\G)$, therefore
\begin{equation}\label{e:iva} \Phi([f]) \cdot \Phi([\ad_g]) \cdot \Phi([f])^{-1} =\Phi([\ad_{f(g)}]) \end{equation}  in $\aut(X)$.  Equation~\ref{e:iva} tells us that $\Phi([f])$ is contained in the relative commensurator $\Comm_{\Aut(X)}(\G)$. Furthermore, if $[f]$ is nontrivial we can pick $g$ such that $g \neq f(g)$, so that $\Phi([\ad_g]) \neq \Phi([\ad_{f(g)}])$. In this case Equation~\eqref{e:iva} tells us that $\Phi([f])$ is nontrivial if $[f]$ is nontrivial, so $\Phi$ is injective. The map $\ad$ always extends to a map $$\ad \co \Comm_{\aut(X)}(\G) \to \Comm(\G)$$  on the relative commensurator, and by \eqref{e:iva}, the composition $\ad \circ \Phi$ is the identity on $\Comm(\G)$. It follows that $\ad$ is an surjective, and this map is injective if and only if $C_{\aut(X)}(\G_1)=1$ for every finite index subgroup $\G_1 <\G$.
\end{proof}

\begin{lemma}\label{l:centralizers}
	If $\G$ contains a power of every inner automorphism by a primitive element, then its centralizer $C_{\aut(F_N)}(\G)$ is trivial. In particular, (finite-index subgroups of) ample subgroups and finite-index subgroups of $\ian$ have trivial centralizers in $\aut(F_N)$.
\end{lemma}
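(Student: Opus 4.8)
The plan is to show directly that any $\phi \in \aut(F_N)$ centralizing $\G$ must fix a basis of $F_N$, and is therefore trivial. The only property of $\G$ used is the hypothesis that for each primitive $p \in F_N$ some power $\ad_p^{k(p)}$, with $k(p)\ge 1$, lies in $\G$.

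First I would fix a basis $x_1,\dots,x_N$ of $F_N$ and record the standard identities $\phi\,\ad_g\,\phi^{-1}=\ad_{\phi(g)}$ and $\ad_g^m=\ad_{g^m}$, together with the fact that for $N\ge 2$ the map $F_N\to\innn$, $g\mapsto\ad_g$, is an isomorphism (the centre of $F_N$ is trivial). Each $x_i$ is primitive, so there is $k_i\ge 1$ with $\ad_{x_i^{k_i}}=\ad_{x_i}^{k_i}\in\G$. If $\phi\in C_{\aut(F_N)}(\G)$, then $\phi$ commutes with $\ad_{x_i^{k_i}}$, so $\ad_{\phi(x_i)^{k_i}}=\phi\,\ad_{x_i^{k_i}}\,\phi^{-1}=\ad_{x_i^{k_i}}$; injectivity of $g\mapsto\ad_g$ then gives $\phi(x_i)^{k_i}=x_i^{k_i}$ in $F_N$ for every $i$.

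The one genuinely group-theoretic ingredient is that free groups have unique roots: if $a^m=b^m$ with $m\ge 1$ in a free group, then $a=b$. This is immediate from the fact that centralizers of nontrivial elements in a free group are infinite cyclic (so $a$ and $b$ both lie in a common cyclic subgroup $\langle c\rangle$, say $a=c^s$, $b=c^t$, and $c^{sm}=c^{tm}$ forces $s=t$), together with torsion-freeness to cover the degenerate case $a^m=1$. Applying this with $a=\phi(x_i)$, $b=x_i$, $m=k_i$ shows $\phi(x_i)=x_i$ for all $i$; hence $\phi$ fixes a basis, so $\phi=\mathrm{id}$, and $C_{\aut(F_N)}(\G)=1$.

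For the final clause: the definition of ample records that an ample subgroup contains a nontrivial power of the inner automorphism of each primitive element, and $\innn\le\ian$ because inner automorphisms act trivially on $H_1(F_N)$, so $\ian$ also contains $\ad_p$ for every primitive $p$. If $\G'\le\G$ has finite index, a pigeonhole argument on the cosets of $\G'$ in $\G$ shows that some power of $\ad_p^{k(p)}$, hence a nontrivial power of $\ad_p$, lies in $\G'$; thus $\G'$ still satisfies the hypothesis of the lemma and its centralizer is trivial. I do not expect any serious obstacle here: this is an easy lemma, and the only point needing a moment's thought is the unique-roots fact, which is classical.
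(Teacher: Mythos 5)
Your proof is correct and follows essentially the same route as the paper: commuting with $\ad_{x_i}^{k_i}$ forces $\phi(x_i^{k_i})=x_i^{k_i}$, and uniqueness of roots in free groups then gives $\phi(x_i)=x_i$ for a whole basis, so $\phi=\mathrm{id}$. The extra details you supply (injectivity of $g\mapsto\ad_g$, the proof of unique roots, and the pigeonhole step for finite-index subgroups) are accurate elaborations of points the paper leaves implicit.
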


\begin{proof}
	Let $\{x_1, \ldots, x_N\}$ be a basis of $F_N$ and suppose there exist $k_i \in \mathbb{N}_{>0}$ such that $(\ad_{x_i})^{k_i}  \in \Gamma$ for all $i$. An automorphism $\phi$ commutes with $(\ad_{x_i})^{k_i}$ if and only if $\phi$ fixes $x_i^{k_i}$, which happens if and only if $\phi$ fixes $x_i$, since roots are unique in free groups. Hence the only automorphism centralizing $(\ad_{x_i})^{k_i}$ for all $i$ is the identity.
\end{proof}

\section{Commensurations of the Torelli subgroup} \label{s:torelli}

Recall that $\ian$ is the subgroup of $\aut(F_N)$ that acts trivially on the homology of $F_N$. Pointing to the  analogous subgroup of the mapping class group, $\ian$ is often called the \emph{Torelli subgroup} of $\aut(F_N)$. As Nielsen automorphisms act nontrivially on homology, the Torelli subgroup is not an ample subgroup of $\aut(F_N)$. However, we can exploit the fact that maximal (with respect to inclusion) direct products in $\ian$ have a very special form in order to induce an action of $\Comm(\ian)$ on $\mathcal{F}_{(2)}$.

\begin{proposition}\label{p:maximal_products_in_torelli}
Let $\G$ be a finite-index subgroup of $\ian$. If $D \in \mathcal{D}(\G)$ is maximal with respect to inclusion, then there exists a basis $\{a_1,a_2,x_1 \ldots, x_{N-2}\}$ of $F_N$ such that, in the notation of Section~\ref{s:direct_products},  
\[ D=(L_1\cap \G) \times (R_1 \cap \G)
\times\dots\times(L_{N-2}\cap \G) \times (R_{N-2} \cap \G )\times (I \cap \G). \]
The inner factor of $D$ is finitely generated, but the other direct factors of $D$ are not.
\end{proposition}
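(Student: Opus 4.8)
The plan is to invoke the classification theorem from \cite{BW} (Theorem~\ref{t:direct-products-aut}, together with the associated structural results cited there) and then to rule out all possibilities except the ``first case''. So I would begin by letting $D \in \mathcal{D}(\G)$ be maximal with respect to inclusion, where $\G$ has finite index in $\ian$. By Theorem~\ref{t:direct-products-aut}, after conjugating, $D$ is contained in one of the four listed groups, each of which involves a Nielsen automorphism $\tau$. The crucial observation is that $\tau$, being a Nielsen automorphism, acts nontrivially on $H_1(F_N)$, and hence no nontrivial power of $\tau$ lies in $\ian$ (any nonzero power of a Nielsen automorphism has infinite order image in $\gl(N,\Z)$). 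Since $\G < \ian$, it follows that $\G \cap \langle \tau \rangle = 1$.

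Now I would go through the four cases. In the second case, the factor $\langle \tau \rangle$ contributes nothing once intersected with $\G$, so $D$ would have at most $2N-4$ factors after intersecting with $\G$, contradicting $D \in \mathcal{D}(\G)$ (which requires $2N-3$ nonabelian free factors); more carefully, one checks that $D \cap \G$ cannot meet all $2N-3$ required factors. In the third case, the first factor $\langle \tau, L_1\rangle$ intersected with $\G$ lies in $L_1$ (since the only elements of $\langle \tau, L_1 \rangle$ acting trivially on homology are those in $L_1$, as $L_i$ consists of automorphisms fixing homology while $\tau$ does not, and $\langle \tau, L_1\rangle \cong L_1 \rtimes \langle \tau \rangle$ with $\tau$ acting on $H_1$ by a nontrivial unipotent); similarly the other factors intersect $\G$ inside the untwisted pieces $L_i, R_i, I$. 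In the fourth case, $\langle I , \tau \rangle \cap \G$ lies in $I$, again because $\tau \notin \ian$ and $\langle I, \tau \rangle = I \rtimes \langle \tau \rangle$. So in every case, $D \cap \G$ — hence $D$ itself — is contained in a conjugate of $L_1 \times \cdots \times L_{N-2} \times R_1 \times \cdots \times R_{N-2} \times I$, i.e. $D$ is untwisted. By maximality of $D$, the $i$-th factor must equal $L_i \cap \G$ (resp. $R_i \cap \G$, resp. $I \cap \G$) after adjusting the basis, which gives the displayed form.

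For the final sentence, I would argue about finite generation of the factors directly from their description. The inner factor $I \cap \G$ is a finite-index subgroup of $I \cong \langle a_1, a_2\rangle \cong F_2$ (it is finite-index because $\G$ has finite index in $\ian$ and $I < \ian$, as inner automorphisms act trivially on homology), and finite-index subgroups of $F_2$ are finitely generated. For the other factors: $L_i \cong \langle a_1, a_2 \rangle \cong F_2$ as well, but $L_i$ is \emph{not} contained in $\ian$ — an automorphism $x_i \mapsto w x_i$ with $w \in \langle a_1, a_2 \rangle$ acts on $H_1$ by adding (the homology class of) $w$ to the $i$-th basis vector, so $L_i \cap \ian$ consists exactly of those $w$ lying in the commutator subgroup $[F_2, F_2] = [\langle a_1,a_2\rangle, \langle a_1, a_2 \rangle]$. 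Thus $L_i \cap \ian \cong [F_2, F_2]$, which is an infinitely generated free group; and $L_i \cap \G$ is a finite-index subgroup of $L_i \cap \ian$, hence also infinitely generated (finite-index subgroups of infinitely generated free groups remain infinitely generated). The same analysis applies to each $R_i \cap \G$.

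The main obstacle I anticipate is the bookkeeping in the case analysis of Theorem~\ref{t:direct-products-aut}: one must verify carefully that in cases two through four, intersecting the ambient twisted product with $\G < \ian$ genuinely collapses the twisted/extra directions down into the untwisted pieces, and that this forces $D$ to be untwisted \emph{without} losing any of the $2N-3$ factors — the point being that the untwisted subgroups $L_i^\tau, R_i^\tau, I^\tau$ appearing in cases two through four are \emph{proper} subgroups of $L_i, R_i, I$, so a product built from them together with a maximality hypothesis in $\mathcal{D}(\G)$ can only be consistent with the honest untwisted product of Equation~\eqref{eq:standard2}. Establishing the finite generation dichotomy is then comparatively routine once one has identified $I \cap \G$ as finite-index in $F_2$ and each $L_i \cap \G$, $R_i \cap \G$ as finite-index in a copy of $[F_2, F_2]$.
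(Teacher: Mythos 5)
Your proposal is correct in substance and follows essentially the same route as the paper: the heart of the argument in both is that any twisted component would contain an element whose $\tau^k$-part ($k\neq 0$) sends $a_1\mapsto a_1a_2^k$ and hence acts nontrivially on $H_1(F_N)$, so twisting is impossible inside $\ian$; maximality then forces $D$ to be the full untwisted product of intersections, and the finite-generation dichotomy is obtained exactly as you do, via $L_i\cap\ian=[L_i,L_i]$ (infinitely generated) versus $I<\ian$ (so $I\cap\G$ has finite index in $I\cong F_2$). One detail to repair: in your treatment of the second case there is no contradiction to be had --- the groups $L_i^\tau\cap\G$, $R_i^\tau\cap\G$, $I^\tau\cap\G$ are still nonabelian free (e.g.\ $L_i^\tau\cap\ian$ is the kernel of a surjection from $L_i^\tau\cong F_2$ to $\Z$), so the claim that killing the $\langle\tau\rangle$-factor leaves ``at most $2N-4$ factors'' is false. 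The correct conclusion in that case, obtained by the same homology observation you use in cases three and four, is simply that $D\le L_1^\tau\times\cdots\times R_{N-2}^\tau\times I^\tau$, hence $D$ is untwisted and the maximality argument proceeds as before.
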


\begin{proof}
For the first part of the statement it is enough to show that no factor of $D$ is twisted, as then $D$ is contained in a maximal untwisted direct product of $2N-3$ nonabelian free groups in $\aut(F_N)$. If there were a twisted factor, it would
contain an element   that was a product of a transvection (or inner automorphism in the case of the inner factor) multiplied by a non-zero power $\tau^k$ of the Nielsen automorphism $\tau$.
But such an automorphism $\phi$ would act nontrivially on the homology of $F_N$,  as $\phi(a_1)=a_1a_2^k$. It follows that twisting does not occur for direct products in $\ian$. Thus $D$ has no twisted factors.

For the second part of the statement, recall that $L_i\cong F_2$ is the subgroup of $\aut(F_N)$ consisting of automorphisms that map $x_i \mapsto wx_i$ for some $w \in \langle a_1, a_2 \rangle$, fixing the other basis elements. Such an automorphism acts trivially on homology if and only if $w$ is trivial in the abelianization of $F_N$. Thus $L_i \cap \ian$ is the commutator subgroup $[L_i, L_i]$, which is an infinitely generated free group. The same argument applies to each factor other
than the inner factor.  All inner automorphisms belong to $\ian$, so  $I \cap \ian = I\cong F_N$. As 
$I\cap\G$ has finite index in $I$, it too is finitely generated. 
\end{proof}

\begin{proposition} \label{p:factor_map_torelli}
If $A < \innn$ is a free factor of rank 1 or 2 and $f:\G_1 \to \G_2$ is an isomorphism between finite-index subgroups of $\ian$ then $f(A \cap \G_1)$ is contained in a unique minimal free factor $f_*(A) < \innn$ with the same rank as $A$. Furthermore:
\begin{itemize}
 \item If $f': \G_1' \to \G_2'$ is an isomorphism that agrees with $f$ on a common finite-index subgroup of their domains, then   $f_*(A)=f'_*(A)$.
 \item If $g:\G_2 \to \G_3$ is a second isomorphism between finite-index subgroups of $\ian$, then $(gf)_*(A)=g_*(f_*(A))$.
 \end{itemize}
 \end{proposition}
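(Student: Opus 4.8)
The plan is to run the proof of Proposition~\ref{p:factor_map} almost line for line, substituting for the two ingredients that were special to ample subgroups --- Proposition~\ref{p:transfer} and Remark~\ref{c:durr!} --- the structural information about maximal direct products in the Torelli subgroup supplied by Proposition~\ref{p:maximal_products_in_torelli}. As there, it suffices to treat rank-$2$ factors: a nonabelian subgroup of $F_N=\innn$ lying in a rank-$2$ free factor lies in a unique one (distinct rank-$2$ free factors meet in a cyclic group), and the rank-$1$ statement then follows by writing $\langle c\rangle=A\cap A'$ for suitable rank-$2$ factors $A,A'$ (possible since $N\ge 3$) and intersecting, once one knows $f_*(A)\ne f_*(A')$. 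That last point will hold because $f(A\cap\G_1)$ and $f(A'\cap\G_1)$ will each turn out to be finite-index in the group of inner automorphisms of $f_*(A)$, so if these factors coincided then $A\cap\G_1=A'\cap\G_1$, forcing a nonabelian group into $A\cap A'=\langle c\rangle\cong\Z$.

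So fix a rank-$2$ free factor $A<\innn$ and identify it with a rank-$2$ free factor $\bar A<F_N$. Choose a basis $\{a_1,a_2\}$ of $\bar A$, extend it to a basis $\{a_1,a_2,x_1,\dots,x_{N-2}\}$ of $F_N$, and form $D_0=(L_1\cap\G_1)\times\dots\times(R_{N-2}\cap\G_1)\times(I\cap\G_1)$ in the notation of Section~\ref{s:direct_products}; by the computation in the proof of Proposition~\ref{p:maximal_products_in_torelli} every factor is nonabelian free, so $D_0\in\mathcal D(\G_1)$ with inner factor $I\cap\G_1=A\cap\G_1$. Enlarge $D_0$ to a maximal $D\in\mathcal D(\G_1)$ (Corollary~8.2 of \cite{BW}); by Proposition~\ref{p:maximal_products_in_torelli}, $D$ is untwisted and in standard form with respect to some basis, its inner factor $I(D)$ --- the inner automorphisms by elements of a rank-$2$ free factor $\hat A<F_N$ that lie in $\G_1$ --- being the unique finitely generated direct factor of $D$. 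The first step is the small lemma that $\hat A=\bar A$, so that $I(D)=A\cap\G_1$ exactly: writing $D=I(D)\times H$ with $H$ the product of the transvection factors, every element of $H$ fixes the distinguished basis $\{\hat a_1,\hat a_2\}$ of $\hat A$, hence all of $\hat A$, pointwise; so for $\ad_v\in A\cap\G_1\subseteq D$, decomposing $\ad_v=\eta\,\ad_u$ with $\eta\in H$ and $\ad_u\in I(D)$ (so $u\in\hat A$) and evaluating at $\hat a_1,\hat a_2$ gives $v\hat a_iv^{-1}=u\hat a_iu^{-1}$, whence $u^{-1}v\in C_{F_N}(\hat A)=1$, so $v=u\in\hat A$, $\eta=1$ and $\ad_v\in I(D)$. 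As $\{v:\ad_v\in A\cap\G_1\}$ is finite-index (hence nonabelian) in $\bar A$ and lies in $\hat A$, this forces $\bar A=\hat A$.

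Now apply $f$. Since $\mathcal D(\cdot)$ is defined purely group-theoretically, $f$ induces an inclusion-preserving bijection $\mathcal D(\G_1)\to\mathcal D(\G_2)$, so $f(D)$ is maximal in $\mathcal D(\G_2)$, and Proposition~\ref{p:maximal_products_in_torelli} puts it in standard form with inner factor $I(f(D))=\{\ad_w:w\in\bar B\}\cap\G_2$ for a rank-$2$ free factor $\bar B<F_N$, again the unique finitely generated direct factor. The crucial step is that $f(I(D))=I(f(D))$; for this I would use the elementary fact that in an internal decomposition $G=K\times K'$ of a direct product $G=G_1\times\dots\times G_m$ of nonabelian free groups, the direct factor $K$ is a partial sub-product $\prod_{j\in S}G_j$ (the projections of $K$ and $K'$ to $G_j$ commute and generate $G_j$, and a nonabelian free group is never a product of two nontrivial commuting subgroups, so one projection vanishes). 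Applying this to the finitely generated direct factor $K=f(I(D))$ of $f(D)$, each $G_j$ with $j\in S$ is finitely generated as a quotient of $K$, so $S$ avoids the non-inner coordinates (which are infinitely generated by Proposition~\ref{p:maximal_products_in_torelli}), and $S\ne\varnothing$ as $K\ne 1$; hence $f(I(D))=I(f(D))$. Therefore $f(A\cap\G_1)=I(f(D))\subseteq\{\ad_w:w\in\bar B\}$, and since $A\cap\G_1$ is nonabelian so is its image, so $f_*(A):=B$ (the rank-$2$ free factor of $\innn$ corresponding to $\bar B$) is the unique rank-$2$ free factor containing it, of the same rank as $A$. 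The two bullet points then follow exactly as in Proposition~\ref{p:factor_map}: independence of the representative from the fact that rank-$2$ (resp.\ rank-$1$) free factors sharing a finite-index (resp.\ nontrivial) subgroup coincide, and $(gf)_*(A)=g_*(f_*(A))$ because $gf(A\cap\G_1)\subseteq g(f_*(A)\cap\G_2)\subseteq g_*(f_*(A))$.

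I expect the main obstacle to be precisely the bookkeeping that isolates the inner factor: showing both that $A\cap\G_1$ is exactly the inner factor of its maximal enlargement (the centraliser computation in $F_N$) and that $f$ carries a finitely generated direct factor of a maximal product back onto the inner factor (the partial-sub-product observation together with Proposition~\ref{p:maximal_products_in_torelli}). In the ample case this was handled at a stroke because ``being a power of a Nielsen automorphism'' is visibly isomorphism-invariant; here $f$ is merely an abstract isomorphism and need not a priori preserve innerness, so the entire content is that the direct-product combinatorics forces $f(A\cap\G_1)$ back inside $\innn$.
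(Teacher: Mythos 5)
Your proof is correct and follows the paper's own route: it reruns the argument of Proposition~\ref{p:factor_map}, with Proposition~\ref{p:maximal_products_in_torelli} supplying the two Torelli-specific inputs (every maximal product in a finite-index subgroup of $\ian$ is in standard, untwisted form, and its inner factor is the unique finitely generated direct factor), exactly as the paper does. The additional details you spell out --- that a maximal enlargement of the standard product adapted to $A$ still has inner factor $A\cap\G_1$, and that a direct factor of a product of nonabelian free groups must be a sub-product, so the finitely generated factor is carried to the inner factor --- are precisely the points the paper leaves implicit, and you argue them correctly.
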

 
\begin{proof}
The proof is essentially the same as the proof  of Proposition~\ref{p:factor_map}: we focus on the rank 2 case then,
noting that every inner automorphism belongs to $\ian$, look at the intersections of these rank 2 factors (in $\ian$ or a finite-index subgroup) to deduce the rank 1 case. The only place where the argument differs is the justification of the following two points:
\begin{itemize}
\item An untwisted element $D \in \mathcal{D}(\G_1)$ is sent to an untwisted element $f(D) \in \mathcal{D}(\G_2)$.
\item If  $D \in \mathcal{D}(\G_1)$ is maximal and untwisted then the inner factor of $D$ is sent to the inner factor of $f(D) \in \mathcal{D}(\G_2)$.
\end{itemize}
Fortunately, each of these facts is easier to prove in the case of $\ian$ --- the first is vacuous, since \emph{all} direct products in $\ian$ are untwisted, and the second follows immediately from the last sentence of Proposition~\ref{p:maximal_products_in_torelli}.
\end{proof}

With Proposition \ref{p:factor_map_torelli} in  place of Proposition~\ref{p:factor_map}, we
obtain an action of $\Comm(\ian)$ on $\mathcal{F}_{(2)}$ by arguing as in Proposition~\ref{p:action}.

\begin{proposition} \label{p:action_torelli}
Given an isomorphism $f:\G_1 \to \G_2$ between two finite index subgroups of $\ian$, the mapping $A \mapsto f_*(A)$ induces a homomorphism $\Phi: \Comm(\G) \to \aut(\F_{(2)})$ such that the diagram
\[ \xymatrix{\Comm_{\aut(F_N)}(\G) \ar@/^2pc/[rr]^{\Psi_0} \ar[r]^-{\text{{\rm{ad}}}} & \Comm(\G) \ar[r]^\Phi & \aut(\F_{(2)}) } \]
commutes, where $\Psi_0$ is the restriction of the natural action $\aut(F_N)\to  \aut(\F_{(2)})$.
\end{proposition}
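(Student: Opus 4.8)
The plan is to mimic the proof of Proposition~\ref{p:action} line by line, using Proposition~\ref{p:factor_map_torelli} wherever that argument invoked Proposition~\ref{p:factor_map}. (Since $\G$ has finite index in $\ian$, we have $\Comm(\G)=\Comm(\ian)$, and I will write $\Comm(\ian)$ throughout.) So, given a class $[f]\in\Comm(\ian)$ represented by an isomorphism $f\co\G_1\to\G_2$ between finite-index subgroups of $\ian$, Proposition~\ref{p:factor_map_torelli} attaches to each free factor $A<\innn$ of rank $1$ or $2$ a free factor $f_*(A)$ of the same rank, and its first bullet point tells us that $f_*(A)$ depends only on $[f]$, not on the chosen representative. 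Hence $[f]\mapsto f_*$ is a well-defined assignment on $\Comm(\ian)$.

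Next I would verify that $f_*$ is an automorphism of the graph $\F_{(2)}$. Since $f$ carries subgroups to subgroups and respects inclusions, $f_*$ preserves containment of free factors, and in particular sends edges of $\F_{(2)}$ to edges; thus $f_*$ is a simplicial self-map of $\F_{(2)}$. Applying the cocycle identity $(gf)_*=g_*\circ f_*$ (the second bullet point of Proposition~\ref{p:factor_map_torelli}) with $g=f^{-1}$ shows that $f_*$ is bijective with inverse $(f^{-1})_*$, so $f_*\in\aut(\F_{(2)})$; the same identity shows that $\Phi\co[f]\mapsto f_*$ is a homomorphism $\Comm(\ian)\to\aut(\F_{(2)})$.

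To see that the triangle with $\Psi_0$ commutes, I would take $\phi\in\Comm_{\aut(F_N)}(\ian)$ and recall that conjugation by $\phi$ sends $\ad_g$ to $\ad_{\phi(g)}$ for every $g\in F_N$. Writing $F_N=\innn$, this says $\ad_\phi$ restricts on $A\cap\G_1$ to $\phi$, and since $A\cap\G_1$ has finite index in $A$ and $\phi$ is an automorphism, the minimal free factor containing $\phi(A\cap\G_1)$ is $\phi(A)$; that is, $(\ad_\phi)_*(A)=\phi(A)$ for every rank $1$ or $2$ free factor $A$. But $A\mapsto\phi(A)$ is exactly the action $\Psi_0(\phi)$ on $\F_{(2)}$, so $\Phi\circ\ad=\Psi_0$, which is the required commutativity.

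The main obstacle, such as it is, has already been overcome before we reach this statement: all of the analytic content --- that commensurations of $\ian$ respect the inner factor of a maximal direct product, hence preserve rank $1$ and $2$ free factors, together with the functoriality in $[f]$ --- is packaged into Proposition~\ref{p:factor_map_torelli}, which rests on Proposition~\ref{p:maximal_products_in_torelli}. The only point needing a moment's attention is the bookkeeping for rank $1$ factors, obtained by intersecting the rank $2$ picture and using that every inner automorphism of a primitive element lies in $\ian$; but this is exactly as handled in Proposition~\ref{p:factor_map_torelli}, so the present proposition follows formally, just as Proposition~\ref{p:action} did from Proposition~\ref{p:factor_map}.
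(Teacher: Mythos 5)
Your proposal is correct and follows essentially the same route as the paper: the paper gives no separate argument for this proposition, stating only that one argues exactly as in Proposition~\ref{p:action} with Proposition~\ref{p:factor_map_torelli} substituted for Proposition~\ref{p:factor_map}, which is precisely what you do (well-definedness from the first bullet point, bijectivity and the homomorphism property from the cocycle identity, and commutativity of the triangle from $\phi\,\ad_g\,\phi^{-1}=\ad_{\phi(g)}$ giving $(\ad_\phi)_*(A)=\phi(A)$). Your added remarks on $\Comm(\G)=\Comm(\ian)$ and on handling rank~1 factors via intersections are consistent with the paper's treatment and introduce no gap.
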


\noindent{\bf{End of the Proof of Theorem \ref{t:torelli}.}} 
The commensurator rigidity of $\ian$  follows from Proposition~\ref{p:action_torelli}
and the rigidity of the action of $\aut(F_N)$ on $\mathcal{F}_{(2)}$, upon application of Proposition~\ref{l:Ivanov} and Lemma~\ref{l:centralizers}.
\qed

\section{$\aut(F_2)$ and the twice-punctured torus}

In the introduction, we gave a terse proof  that the abstract commensurator of $\aut(F_2)$ is the extended mapping class group of the five-punctured sphere. The purpose of this section is to give a geometric explanation of this result, which is based on a study of the twice-punctured torus and its hyperelliptic involution.  

\begin{figure}[h]
\centering
\begingroup%
  \makeatletter%
  \providecommand\color[2][]{%
    \errmessage{(Inkscape) Color is used for the text in Inkscape, but the package 'color.sty' is not loaded}%
    \renewcommand\color[2][]{}%
  }%
  \providecommand\transparent[1]{%
    \errmessage{(Inkscape) Transparency is used (non-zero) for the text in Inkscape, but the package 'transparent.sty' is not loaded}%
    \renewcommand\transparent[1]{}%
  }%
  \providecommand\rotatebox[2]{#2}%
  \newcommand*\fsize{\dimexpr\f@size pt\relax}%
  \newcommand*\lineheight[1]{\fontsize{\fsize}{#1\fsize}\selectfont}%
  \ifx\svgwidth\undefined%
    \setlength{\unitlength}{277.53084678bp}%
    \ifx\svgscale\undefined%
      \relax%
    \else%
      \setlength{\unitlength}{\unitlength * \real{\svgscale}}%
    \fi%
  \else%
    \setlength{\unitlength}{\svgwidth}%
  \fi%
  \global\let\svgwidth\undefined%
  \global\let\svgscale\undefined%
  \makeatother%
  \begin{picture}(1,0.2482685)%
    \lineheight{1}%
    \setlength\tabcolsep{0pt}%
    \put(0,0){\includegraphics[width=\unitlength,page=1]{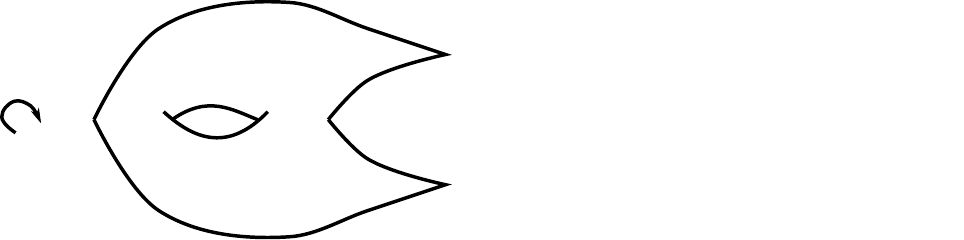}}%
    \put(-0.01389044,0.15410795){\color[rgb]{0,0,0}\makebox(0,0)[lt]{\lineheight{1.25}\smash{\begin{tabular}[t]{l}$\sigma$\end{tabular}}}}%
    \put(0,0){\includegraphics[width=\unitlength,page=2]{involution.pdf}}%
  \end{picture}%
\endgroup%

\caption{The twice-punctured torus $S_{1,2}$ has a hyperelliptic involution $\sigma$ that fixes four points and swaps the punctures. It induces an isomorphism from the pure mapping class group of $S_{1,2}$ to a subgroup of the mapping class group of the five-punctured sphere fixing one of the punctures.}\label{f:involution} 
\end{figure} 

Throughout,  $S_{g,p}$ will be a surface of genus $g$ with $p$ punctures (or marked points) and $\modd(S_{g,p})$ will denote its extended mapping class group: this is the group of isotopy classes of homeomorphisms $S_{g,p} \to S_{g,p}$. The (standard) mapping class group $\smod(S_{g,p})$ is the subgroup of index $2$ given by orientation-preserving homeomorphisms. The pure mapping class group  $\psmod(S_{g,p}) < \smod(S_{g,p})$ is the subgroup of orientation-preserving mapping classes that fix each puncture. This is an index two subgroup of the extended pure mapping class group $\pmodd(S_{g,p}) < \modd(S_{g,p})$, where the homeomorphisms must fix each puncture but are allowed to reverse orientation. When $ p \geq 1$ and $S_{g,p-1}$ is not a sphere (i.e. $p \geq 2$ or $g \geq 1$) then, by viewing one of the punctures as a basepoint, we obtain an injective  homomorphism \begin{equation*} \mu : \pmodd(S_{g,p}) \to \aut(\pi_1(S_{g,p-1})), \label{e:mu}\end{equation*} given by the action of the mapping class group on $\pi_1(S_{g,p-1})$. When $p \geq 2$, the fundamental group of the surface $S_{g,p-1}$ is free. In general, $\mu$ will not be surjective (see Remark~\ref{r:mu}), but it is an isomorphism in the case of a twice-punctured torus (where $\pi_1(S_{1,1}) \cong F_2$). Thus we obtain a geometric description of $\aut(F_2)$ as a mapping class group, which as far as we are aware, seems to have been overlooked in the literature.

\begin{proposition}[$\aut(F_2)$ is an extended pure mapping class group] \label{p:aut_tours}
Let $S=S_{1,2}$ be a twice-punctured torus. The map \[ \mu: \pmodd(S)\to \aut(F_2)\] is an isomorphism, which restricts to an isomorphism $\psmod(S)\to \Saut(F_2)$. Furthermore,
$$
\modd(S) \cong \aut(F_2) \times \Z/2\ \  \text{  and  }\ \ 
\smod(S) \cong \Saut(F_2) \times \Z/2,
$$
where the visible $\mathbb{Z}/2\mathbb{Z}$ is generated by a hyperelliptic involution $\sigma$.
\end{proposition}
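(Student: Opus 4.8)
The plan is to identify $\mu$ by comparison with Nielsen's description of $\aut(F_2)$, and then to pin down $\sigma$ geometrically. Injectivity of $\mu$ has already been recorded, so the first task is surjectivity, and for this I would build a commutative ladder of short exact sequences and apply the five lemma. Deleting one of the two punctures of $S=S_{1,2}$ and regarding it as a basepoint gives the Birman exact sequence
\[ 1 \longrightarrow \pi_1(S_{1,1}) \longrightarrow \pmodd(S_{1,2}) \longrightarrow \modd(S_{1,1}) \longrightarrow 1, \]
valid because $\pi_1(S_{1,1})\cong F_2$ has trivial centre; the kernel is the point-pushing subgroup, and pushing the basepoint around a loop $\gamma$ acts on $\pi_1(S_{1,1})$ by conjugation by $[\gamma]$. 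On the other side sits Nielsen's sequence
\[ 1 \longrightarrow \inn(F_2) \longrightarrow \aut(F_2) \longrightarrow \gl(2,\Z) \longrightarrow 1, \]
the right-hand map being the action on $H_1(F_2)\cong\Z^2$. The homomorphism $\mu$ carries the first sequence to the second: on kernels it is the canonical isomorphism $F_2=\pi_1(S_{1,1})\to\inn(F_2)$, $\gamma\mapsto\ad_\gamma$ (an isomorphism since $F_2$ is centreless), and on quotients it is the classical isomorphism $\modd(S_{1,1})\cong\gl(2,\Z)$ given by the action on $H_1(S_{1,1})$; both squares commute because in each case the map to $\gl(2,\Z)$ records the action on first homology. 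The five lemma then shows $\mu$ is an isomorphism. For the refinement, a mapping class of $S_{1,2}$ is orientation-preserving precisely when it acts with determinant $+1$ on $H_1$ of the underlying closed torus, which under $\mu$ is the condition of acting with determinant $+1$ on $H_1(F_2)$, i.e.\ of lying in $\Saut(F_2)$; hence $\mu$ restricts to an isomorphism $\psmod(S_{1,2})\to\Saut(F_2)$.

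Next I would realise $\sigma$ explicitly. Write $S=S_{1,2}=(\R^2/\Z^2)\ssm\{p,-p\}$ with $p$ not a $2$-torsion point, and let $\sigma$ be the involution induced by $x\mapsto -x$. Then $\sigma$ is orientation-preserving, its fixed-point set is the four $2$-torsion points, and it interchanges the two punctures, so $\sigma\in\smod(S)\ssm\psmod(S)$ and $\sigma\in\modd(S)\ssm\pmodd(S)$. Since $\psmod(S)$ has index two in $\smod(S)$, $\pmodd(S)$ has index two in $\modd(S)$, and $\sigma^2=1$, this already yields the internal semidirect product decompositions $\smod(S)=\psmod(S)\rtimes\langle\sigma\rangle$ and $\modd(S)=\pmodd(S)\rtimes\langle\sigma\rangle$.

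It then remains to upgrade these to direct products, i.e.\ to show that $\sigma$ is central in $\modd(S)$; combined with the isomorphism $\mu$ this gives $\modd(S)\cong\aut(F_2)\times\Z/2$ and $\smod(S)\cong\Saut(F_2)\times\Z/2$, with the $\Z/2$ generated by $\sigma$. The quotient $S/\sigma$ is the orbifold whose underlying surface is a five-punctured sphere — four punctures coming from the fixed points of $\sigma$ (cone points of order two), one from the interchanged pair of punctures — which is the picture of Figure~\ref{f:involution}. The plan is to invoke the Birman--Hilden theory for this hyperelliptic branched cover: in this low-complexity situation every homeomorphism of $S$ is isotopic to one commuting with $\sigma$ (equivalently, descending to the quotient orbifold), so conjugation by $\sigma$ is the identity on $\modd(S)$. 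I expect this Birman--Hilden input to be the main obstacle; it is the analogue for $S_{1,2}$ of the classical fact that every mapping class of a closed genus-two surface commutes with the hyperelliptic involution, and should be extractable from the work of Birman and Hilden (see also the survey of Margalit and Winarski). A consistency check one can run along the way is that the point-pushing subgroups of the two punctures coincide — both equal the kernel of the action of $\psmod(S)$ on $H_1(\R^2/\Z^2)\cong\Z^2$, which is $\inn(F_2)$ under $\mu$ — so that $\sigma$ at least normalises $\inn(F_2)$ and acts trivially on the quotient $\psmod(S)/\inn(F_2)\cong{\rm{SL}}(2,\Z)$ (conjugation there is by the central element $-I$); controlling the remaining part of the conjugation action of $\sigma$ is exactly the point where the Birman--Hilden step is needed.
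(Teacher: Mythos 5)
Your proposal is correct in substance but reaches the isomorphism $\mu$ by a genuinely different route. The paper argues by hand inside an explicit flat model of $S_{1,2}$: it writes down homeomorphisms $\ell_1,\ell_2,\overline{\iota},r$ covered by affine maps of $\E^2$, checks that their images under $\mu$ are the Nielsen generators $\lambda_1,\lambda_2,\rho_1^{-1},\rho_2^{-1}$ and the basis permutation, and deduces surjectivity onto $\Saut(F_2)$ and then $\aut(F_2)$ directly, with injectivity quoted from the literature. You instead compare the Birman exact sequence $1\to\pi_1(S_{1,1})\to\pmodd(S_{1,2})\to\modd(S_{1,1})\to 1$ (valid in the extended setting since the point-pushing kernel is orientation-preserving and $F_2$ is centreless) with Nielsen's sequence $1\to\inn(F_2)\to\aut(F_2)\to\gl(2,\Z)\to 1$ and apply the five lemma; this is cleaner and even subsumes injectivity, at the cost of invoking Nielsen's theorem $\out(F_2)\cong\gl(2,\Z)$ and $\modd(S_{1,1})\cong\gl(2,\Z)$ as black boxes. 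What the paper's explicit generators buy is downstream use: they are exactly what powers Lemma~\ref{l:symmetric} (symmetric representatives for every mapping class) and the verifications in Proposition~\ref{p:last} (that $r$ descends orientation-reversingly, that half-twists lift to Dehn twists), none of which your abstract route produces. For the product decomposition your semidirect-product step is fine, and you correctly isolate the one remaining input, centrality of $\sigma$ in $\modd(S)$; one caveat on attribution, though: the Birman--Hilden theorem itself concerns symmetric isotopy versus isotopy (it is what the paper uses in Proposition~\ref{p:last}), and does not by itself assert that every mapping class of $S_{1,2}$ has a symmetric representative --- that statement is false for higher-genus hyperelliptic involutions and in this low-complexity case needs either the standard computation of the centre of $\Mod(S_{1,2})$ (extended to $\modd$ by exhibiting one symmetric orientation-reversing class, e.g.\ your reflection) or a generators argument, which is precisely what the paper supplies in Lemma~\ref{l:symmetric} using the generators constructed in this proof. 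So your plan closes, but the centrality step should be cited to the known centre of $\Mod(S_{1,2})$ (or argued on symmetric generators) rather than to Birman--Hilden proper; note also that without genuine centrality of $\sigma$ you would only get $\modd(S)\cong\aut(F_2)\times\Z/2$ with the $\Z/2$ generated by some twisted companion of $\sigma$, whereas the proposition names $\sigma$ itself as the generator.
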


\begin{proof}

It is convenient to specify a definite model for $S=S_{1,2}$. We view $S$ as the quotient of the tiling of the euclidean plane $\mathbb{E}^2$
by unit squares with integer coordinates,
with horizontal edges labelled $a$ and vertical edges labelled $b$. As the marked
points on $S$ we take the $\Z^2$-orbit $v$ of $(0,0)$ and the  $\Z^2$-orbit $c$  of $(1/2,1/2)$. In this model, the hyperelliptic involution $\sigma:S\to S$
is covered by the rotation through $\pi$ about the point  $(1/4,1/4)$. The four fixed points of $\sigma$ are represented by the
midpoints of the diagonals from the centre of any square to the corners -- i.e. the points with quarter-integer coordinates.

The extended pure subgroup  $\pmodd(S) < \modd(S)$ is the subgroup of index $2$ that fixes each of $v$ and $c$.
The central
cyclic subgroup of order $2$ generated by $\sigma$ is obviously a complement to $\pmodd(S)$ and hence 
$$\modd(S) = \pmodd(S) \times \<\sigma\>\cong \pmodd(S) \times \Z/2.$$
Similarly, $\smod(S)=\psmod(S)\times\<\sigma\>$.

Taking $v$ as a basepoint and puncturing $S$ at $c$,  we obtain an identification of $\pi_1(S\ssm\{c\},v)$ with the free group $F_2=F(a,b)$,
where the basis elements correspond to the labelling of the 1-cells in $S$. Then $S$ is a classifying space for $F_2$, and the natural homomorphism \[\mu:\pmodd(S)\to \aut(F_2)\] is injective \cite[Theorem~6.3]{MR214087}. We claim that it is also surjective.

Consider the orientation-preserving homeomorphism $\ell_1:S\to S$ covered by the  map $\E^2\to\E^2$ given on $[n,n+1]\times \R$
by $(n+t,y)\mapsto (n+t, y+n+2t)$ for $0\le t \le 1/2$ and $(n+t,y)\mapsto (n+t, y+n+1)$ for $1/2\le t\le 1$.
Recording the action of this map on the 1-cells of $S$, we see that the image of its mapping class under $\mu$
is  the Nielsen transformation $(\lambda_1: a\mapsto ba,\, b\mapsto b)$. Reversing the roles of $a$ and $b$,
we obtain a mapping class $[\ell_2]$ whose image under $\mu$ is
 $(\lambda_2: a\mapsto a,\, b\mapsto ab)$.

The homeomorphism $\overline{\iota}:S\to S$ covered by ${\rm{-I}}:\E^2\to\E^2$ is  orientation preserving and
$\mu$ sends its mapping class to $[a\mapsto a^{-1},\, b\mapsto b^{-1}]$. The image under $\mu$ of  
$\overline{\iota}\ell_1\overline{\iota}$ is the inverse of $(\rho_1: a\mapsto ab,\, b\mapsto b)$. Similarly, the image of
$\overline{\iota}\ell_2\overline{\iota}$ is the inverse of $(\rho_2: a\mapsto a,\, b\mapsto ba)$. 

As $\{\rho_1,\rho_2,\lambda_1,\lambda_2\}$ generate $\Saut(F_2)$, we have proved that $\mu(\psmod(S))$ contains $\Saut(F_2)$.
Conversely, the image of $\mu(\psmod(S))$
is contained in $\Saut(F_2)$ as elements of $\psmod(S)$ preserve orientation. To see that $\mu(\pmodd(S))$ is the whole of $\aut(F_2)$, note that the involution $r$ of $S$
covered by reflection in the line of slope $1$ through the origin in $\E^2$ acts on $\pi_1(S\ssm\{c\})$ by interchanging $a$ and $b$. 
\end{proof}

\begin{remark} \label{r:mu}
In the general case, the image of $\mu : \pmodd(S_{g,p}) \to \aut(\pi_1(S_{g,p-1}))$ is described in Theorem~5.7.1 of \cite{Z}. Taking account of orientation, there are $2(p-1)$ conjugacy classes in $\pi_1(S_{g,p-1})$ given by peripheral curves about the punctures (i.e. oriented curves bounding a punctured disk). The image of $\mu$ consists of the automorphisms that either fix each of these peripheral conjugacy classes, or else send each such conjugacy class to its inverse. In the case of $S_{1,1}$, the peripheral curves are conjugate to $[a,b]$ and its inverse, and every element of $\aut(F_2)$ sends $[a,b]$ to a conjugate of $[a,b]^{\pm1}$. \end{remark}

In the following lemma, which is well known to experts, the commutation is in the group of homeomorphisms of $S$ fixing $\{v,c\}$ -- in order words, the
maps are commuting exactly, not just up homotopy.

\begin{lemma}\label{l:symmetric}
Every element of $\modd(S)$ can be represented by a homeomorphism of $S\ssm\{v,c\}$ that commutes with the
hyperelliptic involution $\sigma$.
\end{lemma}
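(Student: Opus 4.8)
The plan is to check the statement on a generating set and then propagate it. First observe that the set $\mathcal{S}\subseteq\modd(S)$ of mapping classes having a representative homeomorphism of $S\ssm\{v,c\}$ that commutes \emph{exactly} with $\sigma$ is a subgroup, since $h_1h_2$ and $h_1^{-1}$ commute with $\sigma$ whenever $h_1$ and $h_2$ do. So it suffices to exhibit a generating set of $\modd(S)$ lying in $\mathcal{S}$. Using the splitting $\modd(S)=\pmodd(S)\times\langle\sigma\rangle$ and the isomorphism $\mu\co\pmodd(S)\to\aut(F_2)$ from Proposition~\ref{p:aut_tours}, I would take $[\sigma]$ together with the explicit homeomorphisms $\ell_1,\ell_2$ (which realise $\lambda_1,\lambda_2$), $\overline\iota$ (realising $a,b\mapsto a^{-1},b^{-1}$) and $r$ (realising $a\leftrightarrow b$) built in the proof of that proposition. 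These generate $\modd(S)$ because, as recorded there, $\langle\lambda_1,\lambda_2,\mu(\overline\iota)\rangle=\Saut(F_2)$ (conjugating $\lambda_i$ by $\mu(\overline\iota)$ yields $\rho_i$) and adjoining $r$, which has determinant $-1$ on homology, gives all of $\aut(F_2)$.

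The next step disposes of the ``affine'' generators $\overline\iota$ and $r$ (and of $\sigma$, trivially). In the model of Proposition~\ref{p:aut_tours}, $\sigma$ is covered by $\rho\co\E^2\to\E^2$, $\rho(x)=(1/2,1/2)-x$, and the key elementary remark is that \emph{any} affine homeomorphism of $S$ commutes with $\sigma$: such a map is covered by a linear $A\in\gl(2,\Z)$ which, in order to descend to $S$, must fix the marked point $c$, i.e.\ $A(1/2,1/2)\equiv(1/2,1/2)\pmod{\Z^2}$; then $A\rho A^{-1}(x)=A(1/2,1/2)-x$ differs from $\rho(x)$ by a vector in $\Z^2$, so $A$ normalises the orbifold deck group $\langle\Z^2,\rho\rangle$ of $S/\sigma$ and hence descends, together with $\sigma$, to a pair of commuting homeomorphisms of $S$. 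Since $\overline\iota$ is covered by $-I$ and $r$ by the coordinate transposition, both are affine and fix $c$, so they lie in $\mathcal{S}$.

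The only real work is in the remaining generators $\ell_1$ and $\ell_2$, which are not affine. Here I would use that the lift $L_1$ of $\ell_1$ is vertically fibred: $L_1(x,y)=(x,\,y+\theta(x))$ for the piecewise-linear function $\theta$ with $\theta(x+1)=\theta(x)+1$ that equals $2x$ on $[0,1/2]$ and $1$ on $[1/2,1]$. The point is the single identity $\theta(x)+\theta(1/2-x)\in\Z$ for all $x$ --- the statement that this shear profile is invariant under the rotation $\rho$ --- from which one reads off that $L_1\rho(x,y)$ and $\rho L_1(x,y)$ differ by a vector in $\Z^2$, so $\ell_1$ commutes with $\sigma$ on $S$; $\ell_2$ is handled identically after exchanging the two coordinates. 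Restricting every generator to $S\ssm\{v,c\}$ and recalling that $\mathcal{S}$ is a subgroup then completes the proof. The main --- indeed the only --- obstacle is this fibred computation; everything else reduces either to the subgroup remark or to the linear-algebra observation about affine maps. (One could instead invoke Birman--Hilden theory for the branched double cover $S\to S/\sigma$, but the explicit argument above is in the spirit of Proposition~\ref{p:aut_tours}.)
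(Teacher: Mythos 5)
Your proof is correct and follows essentially the same route as the paper: check exact commutation with $\sigma$ on the explicit generating set $\{\sigma,\ell_1,\ell_2,\overline{\iota},r\}$ by working with the preferred lifts to $\E^2$ (the conjugate $f\tilde\sigma f^{-1}$ being another lift of $\sigma$, a rotation about $f(1/4,1/4)$, which your affine and shear-profile computations verify explicitly), and then use closure of symmetric representatives under products and inverses.
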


\begin{proof} In the course of the preceding proof, we proved that $\modd(S)$ is generated by the homotopy classes of the homeomorphisms
 $\{\sigma, \ell_1, \ell_2, \overline{\iota}, r\}$,
with a preferred lift of each to a map $f:\E^2\to \E^2$.
Each of the lifts that we described permutes the four classes of fixed points of the
hyperelliptic involution $\sigma$, i.e. the four classes of points with quarter-integer coordinates. With the exception of $r$ and $\sigma$
itself, we cannot choose a  specific lift of $\sigma$ that
 commutes with our preferred lifts $f:\E^2\to \E^2$, but for each we can choose a pair of lifts
$\tilde\sigma, \tilde\sigma'$ such that $ f \tilde\sigma = \tilde\sigma' f$; specifically, we can take $\tilde\sigma$ to be rotation about $(1/4, 1/4)$ and
$\tilde\sigma'$ to be  rotation about $f(1/4,1/4)$. Thus the induced homeomorphisms of $S$ commute in each case. Every homeomorphism
of $(S, \{v,c\})$ is homotopic ${\rm{rel} } \{v,c\}$ to a product of the generators 
 $\{\sigma, \ell_1, \ell_2, \overline{\iota}, r\}$ and their inverses, whence the desired representatives.
\end{proof}

Let $B_4$ denote the braid group on four strands, let $D_4$ be a four-punctured disc, 
and recall that $B_4 \cong \smod(D_4)$ (see \cite{Birman,FM}). We cap off the boundary of $D_4$ 
by attaching a punctured disc. This gives a left exact sequence:

\[ 1 \to \mathbb{Z} \to \smod(D_4) \to \smod(S_{0,5}).\]

The $\mathbb{Z}$ in this sequence is the Dehn twist around the boundary of the disc, 
which  is the centre $Z(B_4)$ of the braid group. The image of $B_4$ in $\smod(S_{0,5})$ is the subgroup 
consisting of mapping classes that fix a distinguished puncture (the `new' one that was added when capping off). 
Thus $B_4/Z(B_4)$ is isomorphic to this subgroup. It is also isomorphic to $\saut(F_2)$ according to the following proposition.

\begin{proposition}\label{p:last}
There is an injective homomorphism \[ f: \pmodd(S_{1,2}) \to \modd(S_{0,5}). \] The image of $f$ is the index-five subgroup of $\modd(S_{0,5})$ given by mapping classes fixing a distinguished puncture, namely the image of $\{v,c\}$ under the map $S_{1,2} \to S_{1,2}/\sigma \cong S_{0,5}$.
\end{proposition}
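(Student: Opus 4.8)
The plan is to build $f$ from the branched double cover $q\colon S_{1,2}\to S_{1,2}/\sigma\cong S_{0,5}$, descending mapping classes via Lemma~\ref{l:symmetric}. First I would record the geometry of $q$ in the model of Proposition~\ref{p:aut_tours}: the involution $\sigma$ is covered by the rotation of $\E^2$ through $\pi$ about $(1/4,1/4)$, its four fixed points on the torus are the points with quarter-integer coordinates, and $v,c$ are interchanged. By Riemann--Hurwitz the quotient is a sphere; the four fixed points become branch points $p_1,\dots,p_4$ and the image of $\{v,c\}$ is a fifth distinguished point $p_0$. Deleting all five identifies $S_{1,2}/\sigma$ with $S_{0,5}$, and $q$ restricts to a double cover branched over $\{p_1,\dots,p_4\}$ with $q^{-1}(p_0)=\{v,c\}$.

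Next I would identify the liftable subgroup of $\modd(S_{0,5})$. The cover corresponds to the homomorphism $\chi\colon\pi_1(S_{0,5})\to\Z/2$ sending a peripheral loop about $p_i$ to $1$ for $i=1,\dots,4$ and the loop about $p_0$ to $0$. A class $\psi\in\modd(S_{0,5})$ lifts through $q$ if and only if $\psi_*$ preserves $\ker\chi$; since $\psi$ permutes the five peripheral conjugacy classes and $\chi$ is nonzero on exactly the four of them surrounding $p_1,\dots,p_4$, this occurs precisely when $\psi$ fixes $p_0$. As $\modd(S_{0,5})$ surjects onto the symmetric group on the five punctures, $\Stab(p_0)$ has index five; this will give the index assertion.

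To define $f$, take $[\phi]\in\pmodd(S_{1,2})$ and, using Lemma~\ref{l:symmetric}, a representative homeomorphism $\phi$ of $S_{1,2}$ commuting with $\sigma$ (it fixes $v$ and $c$ since $[\phi]$ is pure). Then $\phi$ descends to a homeomorphism $\bar\phi$ of $S_{1,2}/\sigma$; commuting with $\sigma$ forces $\phi$ to permute $\Fix(\sigma)$, so $\bar\phi$ permutes $\{p_1,\dots,p_4\}$, while $\phi(v)=v$ gives $\bar\phi(p_0)=p_0$. Thus $\bar\phi$ represents an element of $\Stab(p_0)<\modd(S_{0,5})$, and I would set $f([\phi])=[\bar\phi]$. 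That $f$ is a well-defined homomorphism and is injective is the Birman--Hilden content of the situation: two $\sigma$-equivariant homeomorphisms that are isotopic are equivariantly isotopic (so their quotients are isotopic), and an isotopy of $\bar\phi$ to the identity lifts through $q$ to an isotopy of $\phi$ to a deck transformation, whence $[\phi]\in\langle\sigma\rangle$ in $\modd(S_{1,2})$ and hence $[\phi]=1$ because $\langle\sigma\rangle\cap\pmodd(S_{1,2})=1$ (as $\sigma$ swaps $v$ and $c$). For surjectivity onto $\Stab(p_0)$, a class $\psi$ fixing $p_0$ lifts to a homeomorphism $\tilde\psi$ of $S_{1,2}$ by the previous paragraph; one checks every lift commutes with $\sigma$ and preserves $q^{-1}(p_0)=\{v,c\}$, and after replacing $\tilde\psi$ by $\sigma\tilde\psi$ if necessary we may take it to fix $v$ and $c$, so $[\tilde\psi]\in\pmodd(S_{1,2})$ and $f([\tilde\psi])=\psi$.

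The main obstacle is the Birman--Hilden step in the third paragraph — the equivariant isotopy theorem and the lifting of isotopies through the branched cover $q$. I would invoke this from the literature rather than reprove it, noting that its hypotheses hold here because $\langle\sigma\rangle$ is generated by a central involution of $\modd(S_{1,2})$ (Proposition~\ref{p:aut_tours}). Everything else — the Riemann--Hurwitz computation, the monodromy calculation pinning down $\Stab(p_0)$ as the liftable subgroup, and the transitivity of $\modd(S_{0,5})$ on its punctures — is routine.
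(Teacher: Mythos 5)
Your proposal is correct, and its core -- defining $f$ via symmetric representatives (Lemma~\ref{l:symmetric}) and invoking the Birman--Hilden theorem for well-definedness and injectivity -- is the same as the paper's argument. Where you genuinely diverge is in identifying the image. The paper does this concretely: it checks that the reflection $r$ from Proposition~\ref{p:aut_tours} descends to an orientation-reversing class of $S_{0,5}$, and that the half-twists generating the puncture-stabilizer in $\smod(S_{0,5})$ lift to Dehn twists on $S_{1,2}$ (citing \cite{FM}), so the image contains, hence equals, the stabilizer of the distinguished puncture. You instead use the covering-space lifting criterion for the $\Z/2$ branched cover: the monodromy $\chi$ is nonzero exactly on the peripheral classes at the four branch punctures, so a class in $\modd(S_{0,5})$ lifts precisely when it fixes $p_0$; every lift automatically commutes with $\sigma$ (two lifts differ by a deck transformation), and composing with $\sigma$ if necessary yields a pure lift. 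This buys a uniform treatment of orientation-reversing classes and of surjectivity without needing a generating set for the stabilizer, at the cost of the (routine) bookkeeping of extending lifts over the branch points; the paper's route is more pictorial and shorter given the braid-group description already in play. One small caveat: the hypothesis you cite for the Birman--Hilden step -- that $\langle\sigma\rangle$ is generated by a central involution of $\modd(S_{1,2})$ -- is not the relevant hypothesis; what one needs is that this is a finite cyclic (here hyperelliptic, double) branched cover of a hyperbolic surface, as in the sources \cite{BH1,BH2,MW} cited in the paper. With that citation adjusted, your argument is complete.
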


\begin{proof}
This follows from work of Birman--Hilden. In \cite{BH1,BH2}, they show that every \emph{symmetric homeomorphism} (i.e., one that commutes with $\sigma$ exactly) is isotopic to the identity only if there is an isotopy through symmetric homeomorphisms (see \cite{MW} for a modern account of such matters). It follows that two symmetric homeomorphisms of $S_{1,2}$ are isotopic if and only if the homeomorphisms of $S_{1,2}/\sigma \cong S_{0,5}$ that they induce via the quotient map are isotopic. Therefore the map $f$ is well-defined, as by Lemma~\ref{l:symmetric}, every mapping class in $\pmodd(S_{1,2})$ has a symmetric representative. To see that the image is as described in the proposition, first check that the reflection $r$ of $S_{1,2}$ described in Proposition~\ref{p:aut_tours} is still orientation-reversing after taking the quotient. This leaves us to understand  the image in $\smod(S_{0,5})$. The subgroup fixing the distinguished puncture is generated by half-twists about the remaining punctures (as one sees from the braid group picture) and one can check that these half-twists lift to Dehn twists on $S_{1,2}$ (see Figure~9.15 in \cite{FM}). 
\end{proof}

\begin{corollary}[Dyer--Formanek--Grossman, \cite{MR666911}]
The groups $\saut(F_2)$ and $B_4/Z(B_4)$ are isomorphic.
\end{corollary}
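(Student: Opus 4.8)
The plan is to concatenate the isomorphisms already assembled in this section with the Birman capping sequence recalled above. First I would apply Proposition~\ref{p:aut_tours}, whose restriction $\mu\co\psmod(S_{1,2})\to\saut(F_2)$ is an isomorphism; thus $\saut(F_2)\cong\psmod(S_{1,2})$, the orientation-preserving pure mapping class group of the twice-punctured torus. Next I would restrict the injection $f\co\pmodd(S_{1,2})\to\modd(S_{0,5})$ of Proposition~\ref{p:last} to the index-two subgroup $\psmod(S_{1,2})$. Because the hyperelliptic involution $\sigma$ is orientation-preserving, the quotient map $S_{1,2}\to S_{1,2}/\sigma\cong S_{0,5}$ carries an orientation-preserving symmetric homeomorphism to an orientation-preserving one, and an orientation-reversing symmetric homeomorphism (for instance the reflection $r$ of Proposition~\ref{p:aut_tours}) to an orientation-reversing one. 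Hence $f$ restricts to an isomorphism from $\psmod(S_{1,2})$ onto the orientation-preserving part of $f(\pmodd(S_{1,2}))$, namely the subgroup $G<\smod(S_{0,5})$ of mapping classes fixing the distinguished puncture (the image of $\{v,c\}$ under $S_{1,2}\to S_{0,5}$).

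It then remains only to recognise $G$ as $B_4/Z(B_4)$, and this is exactly the capping construction recalled just before Proposition~\ref{p:last}: attaching a once-punctured disc to the boundary of $D_4$ gives the left exact sequence $1\to\Z\to\smod(D_4)\to\smod(S_{0,5})$ in which the kernel $\Z$ is the Dehn twist about $\partial D_4$, generating the centre $Z(B_4)$, and the image of $\smod(D_4)\cong B_4$ is precisely the stabiliser $G$ of the newly added puncture. Stringing the three isomorphisms together yields
\[ \saut(F_2)\ \cong\ \psmod(S_{1,2})\ \cong\ G\ \cong\ B_4/Z(B_4), \]
which is the assertion of the corollary.

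The substantive points — both already discharged in the proof of Proposition~\ref{p:last} — are the well-definedness and injectivity of $f$, which rest on the Birman--Hilden theorem that an isotopy between symmetric homeomorphisms may be chosen symmetric, and the surjectivity of $f$ onto the full puncture-stabiliser, which follows by exhibiting the half-twists generating $G$ as images of Dehn twists on $S_{1,2}$. Beyond keeping track of orientation types through the quotient by $\sigma$, I anticipate no further obstacle: the corollary is a formal consequence of Propositions~\ref{p:aut_tours} and~\ref{p:last} together with the capping sequence.
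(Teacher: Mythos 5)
Your argument is correct and is essentially the paper's own proof: the paper likewise combines Proposition~\ref{p:aut_tours} and Proposition~\ref{p:last} with the capping sequence to identify both $\saut(F_2)$ and $B_4/Z(B_4)$ with the stabiliser of the distinguished puncture in $\smod(S_{0,5})$. Your explicit bookkeeping of orientation types through the quotient by $\sigma$ just spells out what the paper checks via the reflection $r$.
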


Combining Propositions \ref{p:aut_tours} and \ref{p:last}, we see that ${\rm{SAut}}(F_N)$ is isomorphic 
to the stabiliser in $\smod(S_{0,5})$ of a puncture, and the discussion before (\ref{p:last}) shows that $B_4/Z(B_4)$ is too.
Extending arguments of Ivanov, 
Korkmaz \cite[Theorem~3]{MR1696431} proved that $\Comm(\modd(S_{0,5})) \cong \modd(S_{0,5})$. 
This completes the promised geometric proof of Theorem \ref{t:comm_f2}, whose statement we recall for the
convenience of the reader.

\begin{theorem}
The abstract commensurator of $\aut(F_2)$ is the extended mapping class group of the five-punctured sphere, and $\aut(F_2)$ is an index five subgroup of its abstract commensurator.
\end{theorem}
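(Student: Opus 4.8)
The plan is to combine Propositions~\ref{p:aut_tours} and~\ref{p:last} with the commensurator rigidity of $\modd(S_{0,5})$ due to Korkmaz; no new geometry is needed beyond what has been set up.

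By Proposition~\ref{p:aut_tours} there is an isomorphism $\aut(F_2)\cong\pmodd(S_{1,2})$, and by Proposition~\ref{p:last} the group $\pmodd(S_{1,2})$ is isomorphic to the subgroup $G_0<\modd(S_{0,5})$ consisting of mapping classes fixing the distinguished puncture (the image of $\{v,c\}$ under $S_{1,2}\to S_{1,2}/\sigma\cong S_{0,5}$); this $G_0$ has index five. Since a group and any of its finite-index subgroups have canonically isomorphic abstract commensurators, we get
\[ \Comm(\aut(F_2))\;\cong\;\Comm(\pmodd(S_{1,2}))\;\cong\;\Comm(G_0)\;\cong\;\Comm(\modd(S_{0,5})). \]
Korkmaz \cite[Theorem~3]{MR1696431} proved that $\modd(S_{0,5})$ is commensurator rigid, i.e. the natural map $\ad\co\modd(S_{0,5})\to\Comm(\modd(S_{0,5}))$ is an isomorphism. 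Concatenating the displayed isomorphisms with this one yields $\Comm(\aut(F_2))\cong\modd(S_{0,5})$, which is the first assertion.

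For the index statement I would trace the natural homomorphism $\ad\co\aut(F_2)\to\Comm(\aut(F_2))$ through the chain above. The canonical identification $\Comm(G_0)=\Comm(\modd(S_{0,5}))$ intertwines $\ad_{G_0}$ with the restriction to $G_0$ of $\ad_{\modd(S_{0,5})}$, so Korkmaz's rigidity shows that, under the identification $\Comm(\aut(F_2))\cong\modd(S_{0,5})$, the image of $\ad\co\aut(F_2)\to\Comm(\aut(F_2))$ is exactly the index-five subgroup $G_0$. Finally $\ad$ is injective: any finite-index subgroup $\G<\aut(F_2)$ meets $\innn\cong F_2$ in a finite-index subgroup, hence contains powers $(\ad_a)^{k},(\ad_b)^{k}$ of the inner automorphisms of a basis $\{a,b\}$ of $F_2$, and an automorphism centralizing these fixes $a^{k}$ and $b^{k}$, hence fixes $a$ and $b$ by uniqueness of roots in $F_2$; this is precisely Lemma~\ref{l:centralizers}. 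Thus $\aut(F_2)$ maps isomorphically onto $G_0<\Comm(\aut(F_2))\cong\modd(S_{0,5})$, an index-five subgroup, as claimed.

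The only real obstacle is the bookkeeping in the third paragraph: one must verify that passing to finite-index subgroups, transporting along the isomorphisms of Propositions~\ref{p:aut_tours} and~\ref{p:last}, and invoking Korkmaz's rigidity are all compatible with the respective $\ad$ maps, so that the abstract natural map $\aut(F_2)\to\Comm(\aut(F_2))$ genuinely corresponds to the concrete inclusion $G_0\hookrightarrow\modd(S_{0,5})$. This is a routine (if slightly fussy) diagram chase; every other ingredient is a direct citation of results already established.
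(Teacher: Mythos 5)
Your argument is correct and is essentially the paper's own proof of this theorem: it combines Propositions~\ref{p:aut_tours} and~\ref{p:last} with the invariance of abstract commensurators under passage to finite-index subgroups and Korkmaz's rigidity of $\modd(S_{0,5})$, exactly as the paper does, with the injectivity of $\ad$ supplied by the same centralizer/uniqueness-of-roots observation as Lemma~\ref{l:centralizers}. The extra bookkeeping you flag (that the identification carries $\ad(\aut(F_2))$ onto the puncture-stabilizer $G_0$) is the same routine verification the paper leaves implicit.
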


\bibliographystyle{alpha}
\bibliography{commensurator-bib}

\begin{flushleft} 
Mathematical Institute\\
University of Oxford\\
Oxford OX2 6GG\\
\emph{e-mail: }\texttt{bridson@maths.ox.ac.uk, wade@maths.ox.ac.uk} 
\end{flushleft}

\end{document}